\newcommand{\cut}[1]{{}}
\newcommand{\commwy}[1]{{\color{red}(#1 -- Wotao)}} 
\newcommand{\vu}{{\mathbf{u}}}
\newcommand{\vv}{{\mathbf{v}}}
\newcommand{\vx}{{\mathbf{x}}}
\newcommand{\vz}{{\mathbf{z}}}
\newcommand{\vX}{{\mathbf{X}}}
\newcommand{\cB}{{\mathcal{B}}}
\newcommand{\cC}{{\mathcal{C}}}
\newcommand{\cF}{{\mathcal{F}}}
\newcommand{\cG}{{\mathcal{G}}}
\newcommand{\cH}{{\mathcal{H}}}
\newcommand{\cI}{{\mathcal{I}}}
\newcommand{\cX}{{\mathcal{X}}}
\newcommand{\EE}{{\mathbb{E}}}
\newcommand{\RR}{\mathbb{R}}
\newcommand{\St}{{\mathrm{subject~to}}} 
\newcommand{\Prob}{{\mathrm{Prob}}} 
\newcommand{\Proj}{{\mathrm{Proj}}}
\newcommand{\prox}{\mathbf{prox}}
\newcommand{\refl}{\mathbf{refl}}
\newcommand{\TPRS}{T_{\mathrm{PRS}}}
\newcommand{\TFBS}{T_{\mathrm{FBS}}}
\DeclareMathOperator*{\argmin}{arg\,min}
\DeclareMathOperator*{\Min}{minimize}
\DeclareMathOperator*{\Fix}{Fix}
\DeclareMathOperator*{\zer}{zer}
\DeclarePairedDelimiter{\dotp}{\langle}{\rangle}
\newcommand{\bc}{\begin{center}}
\newcommand{\ec}{\end{center}}
\newcommand{\bdm}{\begin{displaymath}}
\newcommand{\edm}{\end{displaymath}}
\newcommand{\beq}{\begin{equation}}
\newcommand{\eeq}{\end{equation}}
\newcommand{\bfl}{\begin{flushleft}}
\newcommand{\efl}{\end{flushleft}}
\newcommand{\bt}{\begin{tabbing}}
\newcommand{\et}{\end{tabbing}}
\newcommand{\beqn}{\begin{align}}
\newcommand{\eeqn}{\end{align}}
\newcommand{\beqs}{\begin{align*}} 
\newcommand{\eeqs}{\end{align*}}  
\newtheorem{assumption}{Assumption}
\def\label@in@display{\gdef\df@label}
\title{ARock: an Algorithmic Framework for Asynchronous Parallel Coordinate Updates}
\author{Zhimin Peng \and
        Yangyang Xu \and
        Ming Yan \and
        Wotao Yin}
\institute{Z. Peng \and W. Yin\at
              Department of Mathematics, University of California, Los Angeles, CA 90095, USA\\
              \email{zhimin.peng / wotaoyin@math.ucla.edu}\\
           Y. Xu\at Institute for Mathematics and its Application, University of Minnesota, Minneapolis, MN 55455, USA\\ \email{yangyang@ima.umn.edu} \\
            M. Yan\at Department of Computational Mathematics, Science and Engineering, Michigan State University, East Lansing, MI 48824, USA \\ \email{yanm@math.msu.edu}}
\date{\today}
\journalname{Report}  
\begin{document}

\maketitle

\begin{abstract}
Finding a fixed point to a nonexpansive operator, i.e., $x^*=Tx^*$, abstracts many problems in numerical linear algebra, optimization, and other areas of scientific computing. To solve fixed-point problems, we propose ARock, an algorithmic framework  in which multiple agents (machines, processors, or cores) update  $x$ in an  asynchronous parallel fashion. Asynchrony is crucial to parallel computing since it reduces  synchronization wait, relaxes communication bottleneck, and thus speeds up computing significantly. At each step of ARock, an  agent updates a randomly selected coordinate $x_i$ based on possibly out-of-date information on $x$. The agents share $x$  through either global memory or communication. If writing $x_i$ is atomic, the agents can read and write $x$ without memory locks.

Theoretically, we show that if the nonexpansive operator $T$ has a fixed point, then with probability one,  ARock generates a sequence that converges to a fixed points of $T$. Our conditions on $T$ and  step sizes are  weaker than comparable work. Linear convergence  is also obtained.

We propose special cases of ARock for linear systems, convex optimization, machine learning, as well as distributed and decentralized consensus problems. Numerical experiments of solving sparse logistic regression problems are presented.
\end{abstract}
\vspace{-4mm}
\tableofcontents

\thispagestyle{plain}
\markboth{Z. Peng, Y. Xu, M. Yan, and W. Yin}{ARock: Async-Parallel Coordinate Updates}

\section{Introduction}
Technological advances in data gathering and storage have led to a rapid proliferation of big data in diverse areas such as climate studies, cosmology, medicine, the Internet, and engineering \cite{house2014big}. The data involved in many of these modern applications 
are large and grow quickly. Therefore, parallel computational approaches are needed. This paper introduces a new approach to asynchronous parallel computing with convergence guarantees.

In a synchronous(sync) parallel iterative algorithm, the  agents must wait for the slowest agent to finish an iteration before they can all proceed to the next one (Figure \ref{fig:async_vs_sync:a}). Hence, the slowest agent  may cripple the system. In contract,  the agents in an asynchronous(async) parallel iterative algorithm run continuously with little idling (Figure \ref{fig:async_vs_sync:b}). However, the iterations are disordered, and an agent may carry out an iteration without the newest information from other agents.

\begin{figure}[!h]
        \centering
       \begin{subfigure}[b]{0.4\textwidth}
                \includegraphics[width=\textwidth]{./figs/syn-simple}
                \caption{Sync-parallel computing\label{fig:async_vs_sync:a}}
        \end{subfigure}
        ~ 
        \begin{subfigure}[b]{0.4\textwidth}
                \includegraphics[width=\textwidth]{./figs/asyn-simple}
                \caption{Async-parallel computing\label{fig:async_vs_sync:b}}
        \end{subfigure}
        \vspace{-3mm}
        \caption{Sync-parallel computing (left) versus async-parallel computing (right).
}\label{fig:async_vs_sync}
\vspace{-3mm}
\end{figure}

Asynchrony has other advantages \cite{bertsekas1991some}: the system is more tolerant to computing faults and communication glitches; 
it is also easy to incorporate new agents.

On the other hand, it is more difficult to analyze asynchronous algorithms and ensure their convergence. It becomes impossible to find a sequence of  iterates that one completely determines the next. Nonetheless, we let any update be a new iteration and propose an async-parallel algorithm (ARock) for the generic fixed-point iteration. It converges if  the fixed-point operator is nonexpansive (Def. \ref{def:quasi_lip}) and has a fixed point.

Let $\cH_1,\ldots,\cH_m$ be Hilbert spaces and  $\cH:=\cH_1\times\cdots\times \cH_m$ be their Cartesian product. For a \emph{nonexpansive operator}  $T:\cH\to\cH$, our problem is to
\begin{equation}\label{eqn:fix_point_set}
\text{find } x^* \in \cH \qquad \text{ such that }  \qquad x^*=Tx^* .
\end{equation}
{Finding a fixed point to $T$ is equivalent to finding a zero of $S\equiv I-T,$ denoted by $x^*$ such that $0= Sx^*$. Hereafter, we will use both $S$ and $T$ for convenience.}

Problem \eqref{eqn:fix_point_set} is widely applicable in linear and nonlinear equations, statistical regression, machine learning, convex optimization, and optimal control. 
 A generic framework for problem \eqref{eqn:fix_point_set} is the Krasnosel'ski\u i--Mann (KM) iteration~\cite{krasnosel1955two}:
\begin{equation}\label{eqn:km}
x^{k+1} = x^k + \alpha \, (Tx^k-x^k),\quad \mbox{or equivalently,}\quad x^{k+1} = x^k - \alpha  Sx^k,
\end{equation}
where $\alpha \in (0, 1)$ is the step size. 
If  $\Fix T$ --- the set of fixed points of $T$ (zeros of $S$) --- is nonempty, then the sequence $(x^k)_{k\ge 0}$ converges weakly\cut{\footnote{A sequence $(y^k)_{k\ge 0}\subset\cH$ converges weakly to a point $y$ if $\dotp{y^k,z}\to\dotp{y,z}$ for every $z\in\cH$. A sequence $(y^k)_{k\ge 0}\subset\cH$ converges strongly to a point $y$ if $\|y^k-y\|\rightarrow 0$. Strong convergence gives weak convergence, but not vice versa. If $\cH$ has a finite dimension, then they are equivalent. }} to a point in $\Fix T$ and $(Tx^k-x^k)_{k\ge 0}$ converges strongly to 0. The KM iteration generalizes algorithms in convex optimization, linear algebra, differential equations, and monotone inclusions.  Its special cases include the following iterations: alternating projection, gradient descent, projected gradient descent, proximal-point algorithm,
Forward-Backward Splitting (FBS) \cite{passty1979ergodic}, Douglas-Rachford Splitting (DRS)~\cite{lions1979splitting}, 
 a three-operator splitting~\cite{davis2015three}, and the Alternating Direction Method of Multipliers (ADMM)~\cite{lions1979splitting,glowinski1975approximation}. 

In ARock, a set of  $p$ agents, $p\ge 1$, solve problem~\eqref{eqn:fix_point_set} by updating the coordinates $x_i\in\cH_i$, $i=1,\ldots,m$, in a  random and asynchronous fashion. Algorithm~\ref{alg:asyn_core} describes the framework. Its special forms for several applications are given in Section \ref{sec:application} below.

\begin{algorithm}[H]\label{alg:asyn_core}
\SetKwInOut{Input}{Input}\SetKwInOut{Output}{output}
\Input{$x^0\in\cH$,  $K>0$, a distribution $(p_1,\ldots,p_m)>0$ with $\sum_{i=1}^mp_i=1$;}
global iteration counter $k\gets 0$\;
 \While{$k < K$, every agent asynchronously and continuously}{
  select $i_k\in\{1,\ldots,m\}$ with $\mathrm{Prob}(i_k=i)=p_i$\;
  perform an update to $x_{i_k}$ according to  \eqref{eqn:asyn_update}\;
  update the global counter $k \leftarrow k+1$\;
 }
 \caption{ARock: a framework for async-parallel coordinate updates}
\end{algorithm}

Whenever an agent updates a coordinate, the global iteration counter $k$ increases by one.
The $k$th update is applied to $x_{i_k}\in\cH_{i_k}$, where $i_k\in \{1,\ldots,m\}$ is an independent random variable. Each coordinate update  has the form:
\begin{equation}
\label{eqn:asyn_update}
\textstyle x^{k+1} = x^k - \frac{\eta_k}{mp_{i_k}} \, S_{i_k}  \hat{x}^{k},
\end{equation}
where $\eta_k>0$ is a scalar whose range will be set later, $S_{i_k} {x} := (0, ..., 0, (Sx)_{i_k}, 0, ..., 0),$ and $mp_{i_k}$ is used to normalize nonuniform selection probabilities.
In the uniform case, namely,  $p_i \equiv \frac{1}{m}$ for all $i$, we have $mp_{i_k} \equiv 1$, which simplifies the update \eqref{eqn:asyn_update} to
\beq\label{eqn:simple_asyn_update}
x^{k+1} = x^k - \eta_k S_{i_k}  \hat{x}^{k}.
\eeq 
Here, the point $\hat x^k$ is what an agent reads from global memory to its local cache and to which $S_{i_k}$ is applied, and $x^k$ denotes the state of $x$ in global memory just before the update \eqref{eqn:asyn_update} is applied. In a sync-parallel algorithm, we have $\hat{x}^k=x^k$, but  in ARock, due to possible  updates  to $x$ by other agents, $\hat x^k$ can be different from $x^k$. This is a key difference between  sync-parallel and async-parallel algorithms. In Subsection \ref{subset:aync_mem} below, we will establish the relationship between $\hat x^k$ and $x^k$ as
\begin{equation}\label{eqn:inconsist}
\textstyle\hat{x}^k= x^k+\sum_{d\in J(k)}(x^{d}-x^{d+1}),
\end{equation}
where $J(k)\subseteq \{k-1,\ldots,k-\tau\}$ and $\tau \in \mathbb{Z}^+$ is the maximum number of other updates  to $x$ during the computation of \eqref{eqn:asyn_update}. Equation \eqref{eqn:inconsist} has appeared in \cite{liu2014asynchronous}.

The update \eqref{eqn:asyn_update} is only computationally worthy if  $S_{i} x$ is much cheaper to compute than   $Sx$. Otherwise, it is more preferable to apply the full KM update \eqref{eqn:km}. In Section \ref{sec:application}, we will present several applications that have the favorable structures for ARock. The recent work \cite{peng2016coordinate} studies coordinate friendly structures more thoroughly.

The convergence of ARock (Algorithm~\ref{alg:asyn_core}) is stated in Theorems~\ref{thm:convergence} and \ref{thm:strongly_monotone}. Here we include a shortened version, leaving detailed bounds to the full theorems:
\begin{theorem}[Global and linear convergence]\label{thm:simple_convergence}
Let $T:\cH\to\cH$ be a nonexpansive operator that has a  fixed point. 
Let $(x^k)_{k\ge 0}$ be the sequence generated by Algorithm \ref{alg:asyn_core} with  properly bounded step sizes $\eta_k$. Then, with probability one, $(x^k)_{k\ge 0}$  converges weakly to a fixed point of $T$. {This  convergence becomes  strong if $\cH$ has a finite dimension.}

In addition, if $T$ is demicompact (see Definition~\ref{def:monotone} below), then with probability one, $(x^k)_{k\ge 0}$ converges strongly to a fixed point of $T$ .

Furthermore, if $S\equiv I-T$ is quasi-strongly monotone (see Definition~\ref{def:quasi_lip} below), then $T$ has a unique fixed-point $x^*$,  $(x^k)_{k\ge 0}$  converges strongly to $x^*$ with probability one, and $\EE\|x^k-x^*\|^2$ converges to 0 at a linear rate.
\end{theorem}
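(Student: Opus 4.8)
The plan is a stochastic Lyapunov (supermartingale) argument resting on the firm nonexpansiveness of $S\equiv I-T$. Since $T$ is nonexpansive, $\tfrac12 S$ is firmly nonexpansive, equivalently $S$ is $\tfrac12$-cocoercive: $\langle Sx-Sy,\ x-y\rangle \ge \tfrac12\|Sx-Sy\|^2$; with $y=x^\ast\in\Fix T$ (so $Sx^\ast=0$) this gives $\langle S\hat x^k,\ \hat x^k-x^\ast\rangle \ge \tfrac12\|S\hat x^k\|^2$. Fix $x^\ast\in\Fix T$ and let $\cF_k$ be the natural filtration, with $i_k$ independent of $\cF_k$. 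First I would expand $\EE\!\left[\|x^{k+1}-x^\ast\|^2\mid\cF_k\right]$ using \eqref{eqn:asyn_update}: the normalization $mp_{i_k}$ makes the linear term collapse to $-\tfrac{2\eta_k}{m}\langle x^k-x^\ast,\ S\hat x^k\rangle$ and the quadratic term to $\tfrac{\eta_k^2}{m^2}\sum_i p_i^{-1}\|S_i\hat x^k\|^2$. Then I would write $x^k-x^\ast = (\hat x^k-x^\ast) + (x^k-\hat x^k)$, apply cocoercivity to the first piece, and control the asynchronicity cross term $\langle x^k-\hat x^k,\ S\hat x^k\rangle$ through \eqref{eqn:inconsist}, i.e. $x^k-\hat x^k = \sum_{d\in J(k)}(x^{d+1}-x^d)$, and Young's inequality, producing bounded multiples of $\|x^{d+1}-x^d\|^2$ over the delay window $d\in\{k-\tau,\dots,k-1\}$.

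Second, to absorb those lagged quantities I would augment the Lyapunov function with a weighted sum of recent successive differences,
\[
\xi^k \;:=\; \|x^k-x^\ast\|^2 \;+\; \gamma\sum_{j=1}^{\tau} w_j\,\|x^{k-j+1}-x^{k-j}\|^2,
\]
with weights $w_j=\tau-j+1$ decreasing from $\tau$ to $1$ and a constant $\gamma>0$ tied to the step-size bound. With this choice, $\EE[\xi^{k+1}\mid\cF_k]-\xi^k$ has the newly created difference $\|x^{k+1}-x^k\|^2 = \tfrac{\eta_k^2}{m^2 p_{i_k}^2}\|S_{i_k}\hat x^k\|^2$ enter with the largest weight, while the telescoping of the window leaves a strictly negative $-\gamma\sum_{j=1}^{\tau}\|x^{k-j+1}-x^{k-j}\|^2$ that dominates the Young residuals from step one. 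Restricting $\eta_k$ to the stated range (which forces the leftover coefficient of $\|S\hat x^k\|^2$ to be $\le 0$) then yields the drift inequality $\EE[\xi^{k+1}\mid\cF_k] \le \xi^k - \rho_k\|S\hat x^k\|^2$ with $\rho_k\ge 0$, so $(\xi^k)$ is a nonnegative supermartingale. By the Robbins--Siegmund / supermartingale convergence theorem, almost surely $\xi^k$ converges, $(x^k)$ is bounded, $\sum_k \rho_k\|S\hat x^k\|^2<\infty$, and $\|x^{k+1}-x^k\|\to 0$; the last fact and $|J(k)|\le\tau$ give $\|x^k-\hat x^k\|\to 0$, hence $\|Sx^k\| \le \|S\hat x^k\| + 2\|x^k-\hat x^k\| \to 0$ almost surely.

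Third, for the weak-convergence conclusion I would run a stochastic Opial argument: on the almost-sure event above, $\|x^k-x^\ast\|$ converges for each fixed $x^\ast$; taking $x^\ast$ over a countable dense subset of $\Fix T$ (separability of $\cH$) and a diagonal argument makes this hold simultaneously for all $x^\ast\in\Fix T$ on one almost-sure event. Since $\|Sx^k\|\to 0$ and $S$ is demiclosed at $0$ (demiclosedness principle for nonexpansive $T$), every weak sequential cluster point of $(x^k)$ lies in $\Fix T$; Opial's lemma then pins down a single weak limit in $\Fix T$, which is strong in finite dimensions. If $T$ is demicompact, boundedness of $(x^k)$ together with $\|x^k-Tx^k\|\to 0$ yields a strongly convergent subsequence whose limit $\bar x$ is in $\Fix T$ by demiclosedness; since $\|x^k-\bar x\|$ converges and has a subsequence tending to $0$, the whole sequence converges strongly to $\bar x$. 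For the linear rate, quasi-strong monotonicity $\langle Sx,\ x-x^\ast\rangle \ge \mu\|x-x^\ast\|^2$ forces uniqueness of $x^\ast$ and adds a term $-\tfrac{2\eta_k\mu}{m}\|\hat x^k-x^\ast\|^2$ to the drift; bounding $\|\hat x^k-x^\ast\|^2 \ge \tfrac12\|x^k-x^\ast\|^2 - \|x^k-\hat x^k\|^2$ and again absorbing $\|x^k-\hat x^k\|^2$ into the window terms upgrades the drift to $\EE[\xi^{k+1}\mid\cF_k] \le (1-c)\,\xi^k$ for some $c\in(0,1)$, whence $\EE\|x^k-x^\ast\|^2 \le \EE\,\xi^k \le (1-c)^k\,\xi^0$.

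The crux I expect is the joint calibration in step two: choosing the window weights $w_j$, the constant $\gamma$, and the admissible step-size interval so that the telescoped negative terms uniformly dominate the Young residuals from the $\tau$ delayed differences — this is precisely what pins down the delay-dependent bound on $\eta_k$ and produces the $1/\tau$- and $1/\sqrt m$-type factors in the full theorems. The second delicate point is making the ``for all $x^\ast\in\Fix T$ simultaneously'' step rigorous in the probabilistic setting, which requires the separability/diagonal reduction rather than a pointwise Opial argument.
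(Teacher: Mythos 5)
Your argument for the almost-sure weak and strong convergence is essentially the paper's own proof. Your augmented Lyapunov function with window weights $w_j=\tau-j+1$ is exactly the paper's $\xi_k(\vx^*)=\|\vx^k-\vx^*\|_M^2$ (the tri-diagonal metric $M$ encodes precisely those linearly decreasing weights, with $\gamma$ playing the role of $\sqrt{p_{\min}}$), and the chain cocoercivity of $S$ $\to$ conditional expansion of $\|x^{k+1}-x^*\|^2$ $\to$ inconsistent-read decomposition plus Young $\to$ Robbins--Siegmund $\to$ stochastic Opial over a countable dense subset of $\Fix T$ $\to$ demiclosedness, with the demicompactness refinement for strong convergence, matches Lemmas~\ref{lemma:fund}--\ref{lemma:convergence} and Theorems~\ref{thm:fund_inquality}--\ref{thm:convergence} step for step.

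Where you genuinely diverge is the linear rate. The paper does \emph{not} contract the augmented function: it fixes $\eta_k\equiv\eta$, proves by induction the reverse-monotonicity estimate $\EE\|\bar x^{k}-x^{k-1}\|^2\le\rho\,\EE\|\bar x^{k+1}-x^k\|^2$ (Lemma~\ref{lemma:linear_induction}), uses it to bound the delayed differences $\sum_{d=k-\tau}^{k-1}\EE\|x^d-x^{d+1}\|^2$ in \emph{full} expectation by $\frac{1}{m^2p_{\min}}\sum_{d=1}^\tau\rho^d\,\EE\|x^k-\bar x^{k+1}\|^2$, and then obtains the recursion $\EE\|x^{k+1}-x^*\|^2\le(1-\frac{\beta\mu\eta}{m})\EE\|x^k-x^*\|^2$ directly on the plain distance. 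Your alternative --- geometric decay of $\xi^k$ conditionally --- can be made to work, but as written it has a gap: absorbing $\|x^k-\hat x^k\|^2$ into the window only neutralizes the \emph{new} positive terms; to get $\EE[\xi^{k+1}\mid\cF_k]\le(1-c)\xi^k$ you must also contract the existing window portion of $\xi^k$, and the only mechanism for that is the telescoped decrement $-\gamma\sum_{j=1}^\tau\|x^{k-j+1}-x^{k-j}\|^2$, which in the paper's calibration is spent \emph{exactly} cancelling the Young residuals. You would need to recalibrate (a strictly larger Young constant or a tighter step-size bound) so that a fraction of that decrement survives; since the surviving fraction is at most $1/\tau$ of the weighted window, this yields $c=O(\min\{\mu\eta/m,\,1/\tau\})$ and a somewhat different admissible step-size interval than the paper's $\min\{\underline{\eta}_1,\underline{\eta}_2\}$. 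The trade-off: your route avoids the induction lemma and keeps everything at the level of one supermartingale-type inequality, at the price of a looser rate constant and a more delicate weight calibration; the paper's route isolates the delay bookkeeping into Lemma~\ref{lemma:linear_induction} (valid only for constant step size and in full expectation) and gets a cleaner recursion on $\EE\|x^k-x^*\|^2$ itself.
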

\cut{\commrw{Do your examples demonstrate the importance of each of these three results (corresponding to three different conditions on the operator T)?}}

In the theorem, the weak convergence result only requires $T$ to be nonexpansive and has a fixed point. In addition, the computation requires: (a) bounded step sizes; (b) random coordinate selection; and (c) a finite maximal delay $\tau$. Assumption (a) is standard, and we will see the bound can be $O(1)$.  Assumption (b) is essential to both the analysis and the numerical performance of our algorithms. Assumption (c) is \emph{not} essential; an infinite delay with a light tail is allowed (but we leave it to future work). {The strong convergence result applies to all the examples in Section \ref{sec:application}, and the linear convergence result applies to Examples \ref{subsec:smooth} and \ref{subsec:nonsmooth}  when the corresponding operator $S$ is quasi-strongly monotone. Step sizes $\eta_k$ are discussed in Remarks \ref{rm:stepsize} and \ref{rmk:linear}.}

\subsection{On random coordinate selection}
ARock employs \emph{random coordinate selection}. This subsection discusses its advantages and disadvantages.

Its main disadvantage is that an agent cannot  caching
the data associated with a coordinate. The variable $x$ and its related data must be either stored  in  global memory or  passed through communication.\cut{An exception arises in the network setting where the coordinates are locally assigned to the agents and the coordinate updates are activated according to independent Poisson processes; then, the coordinate selection is effectively random and independent though no data needs to be moved.} A secondary disadvantage is that pseudo-random number generation takes  time, which becomes relatively significant if each coordinate update is cheap. (The network optimization examples  in Subsections~\ref{sec:dgd} and~\ref{subsec:async_admm_des} are exceptions, where  data  are naturally stored in a  distributed fashion and random coordinate assignments are  the  results of  Poisson processes.) \cut{\commrw{Add the explanation of Poisson Processes here}}

There are several  advantages of random coordinate selection. It realizes the user-specified update frequency $p_i$ for every component $x_i$, $i=1,\ldots,m$, even when different agents have different computing powers and different coordinate updates cost different amounts of computation. Therefore, random assignment ensures load balance.  The algorithm is also fault tolerant in the sense that if one or more agents fail, it will still converge to a fixed-point of $T$. In addition, it has been observed numerically on certain problems~\cite{chang2008coordinate} that random coordinate selection  accelerates convergence. \cut{Furthermore, in~\cite{xu2013block,xu2014globally}, the algorithms with random coordinate selection keep clear of low-quality local solutions in nonconvex optimization, which are difficult to avoid by fixed-order coordinate selection. \commrw{Is the utility of random coordinate updates in nonconvex optimization relevant for the argument that it's an interesting model to analyze in the convex case?}}



\subsection{Uncoordinated memory access}\label{subset:aync_mem}
In ARock, since multiple agents simultaneously read and update $x$ in global memory, $\hat x^k$ --- the result of  $x$ that is read from global memory by an agent to its local cache for computation --- may not equal $x^j$ for any $j\le k$, that is, $\hat x^k$ may never be consistent with a state of $x$ in global memory. This is known as \emph{inconsistent read}. In contrast, \emph{consistent read} means that $\hat x^k = x^j$ for some $j \le k$, i.e., $\hat x^k$ is consistent with a state of $x$ that existed in global memory.

We illustrate inconsistent read and consistent read in the following example, which is depicted in Figure \ref{fig:consist_vs_inconsistent}. Consider $x=[x_1,x_2, x_3, x_4]^T\in\RR^4$ and $x^0=[0, 0, 0, 0]^T$ initially, at time $t_0$. Suppose at time $t_1$,  agent 2 updates $x_1$ from 0 to 1, yielding  $x^1=[1, 0, 0, 0]^T$;  then, at time $t_2$, agent 3 updates $x_4$ from 0 to 2, further yielding $x^2=[1, 0, 0, 2]^T$. Suppose that agent 1 starts reading $x$ from the first component $x_1$ at $t_0$. For consistent read (Figure \ref{fig:consist}), agent 1 acquires a memory lock and only releases the lock after finishing reading all of $x_1$, $x_2$, $x_3$, and $x_4$. Therefore, agent 1 will read in $[0, 0, 0, 0]^T$. Inconsistent read, however, allows agent 1 to proceed without a memory lock: agent 1 starts reading $x_1$ at $t_0$ (Figure \ref{fig:inconsist}) and reaches the last component, $x_4$, after $t_2$; since $x_4$ is updated by agent 3 prior to it is read by agent 1, agent 1 has read $[0, 0, 0, 2]^T$, which is different from any of $x^0,x^1$, and $x^2$.
\begin{figure}[!h]
        \centering
       \begin{subfigure}[b]{0.47\textwidth}
                \includegraphics[width=\textwidth]{./figs/consistent_read}
                \caption{Consistent read. While agent 1 reads $x$ in memory, it acquires a global lock.}\label{fig:consist}
        \end{subfigure}
        ~ 
        \begin{subfigure}[b]{0.47\textwidth}
                \includegraphics[width=\textwidth]{./figs/inconsistent_read}
                \caption{Inconsistent read. No lock. Agent 1 reads $(0,0,0,2)^T$, a non-existing state of $x$.}\label{fig:inconsist}
        \end{subfigure}
        \vspace{-3mm}
        \caption{Consistent read versus inconsistent read: A demonstration.}\label{fig:consist_vs_inconsistent}
        \vspace{-3mm}
\end{figure}

Even with inconsistent read, each component is consistent under the \emph{atomic coordinate update} assumption, which will be defined below. Therefore, we can express what has been read in terms of the changes of individual coordinates. In the above example, the first change is $x^1_1-x^0_1=1$, which is added to $x_1$ just before time $t_1$ by agent 2, and the second change is $x^2_4 - x^1_4=2$, added to $x_4$ just before time $t_2$ by agent 3. The  inconsistent read by agent 1, which gives the result $[0, 0, 0, 2]^T$, 
equals  $x^0 +0\times (x^1-x^0)+ 1\times (x^2 - x^1)$.

We have demonstrated that $\hat x^k$ can be  inconsistent, but each of its coordinates is consistent, that is, for each $i$, $\hat x^k_i$ is an ever-existed state of $x_i$ among $x_i^k,\ldots,x_i^{k-\tau}$. Suppose that  $\hat x^k_i=x_i^{\underline{d}}$, where $\underline{d}\in\{k,k-1,\ldots,k-\tau\}$. Therefore, $\hat x^k_i$ can be related to $x^k_i$ through the \emph{interim  changes} applied to $x_i$. Let $J_i(k)\subset\{k-1,\ldots,k-\tau\}$ be the index set of these interim changes. If $J_i(k)\not=\emptyset$, then $\underline{d}=\min\{d\in J_i(k)\}$; otherwise, $\underline{d}=k$. In addition, we have
 $\hat{x}^k_i =x_i^{\underline{d}}= x^k_i+\sum_{d\in J_i(k)}(x_i^{d}-x_i^{d+1})$.
Since the global counter $k$ is increased after each coordinate update, updates to $x_i$ and $x_j$, $i\not=j$, must occur at different $k$'s and thus $J_i(k)\cap J_j(k)=\emptyset,\,\forall i\neq j$. Therefore, by letting $J(k):=\cup_iJ_i(k)\subset\{k-1,\ldots,k-\tau\}$ and noticing $(x_i^{d}-x_i^{d+1})=0$ for $d\in J_j(k)$ where $i\not = j$, we have
$\textstyle \hat{x}^k_i = x^k_i+\sum_{d\in J(k)}(x_i^{d}-x_i^{d+1}), \forall i=1,\ldots,m$,
which is equivalent to~\eqref{eqn:inconsist}.
%
%
%
%
Here, we have made two assumptions:
\begin{itemize}
\item \emph{atomic coordinate update}: a coordinate is not further broken to smaller components during an update; they are all updated at once. 
\item \emph{bounded maximal delay} $\tau$: during any update cycle of an agent, $x$ in global memory is updated at most $\tau$ times by other agents.
\end{itemize}
When each coordinate is a single scalar, updating the scalar is a single atomic instruction on most modern hardware, so the first assumption naturally holds, and our algorithm is \emph{lock-free}. The case where a coordinate is a block that includes multiple scalars is discussed in the next subsection. 

\subsubsection{Block coordinate}\label{atomlock}
In the ``block coordinate" case (updating a block of several coordinates each time),  the atomic coordinate update assumption can be met by \emph{either} employing a per-coordinate memory  lock \emph{or} taking the following dual-memory approach: Store \emph{two} copies of each coordinate $x_i\in\cH_i$ in global memory, denoting them as $x_i^{(0)}$ and $x_i^{(1)}$; let a bit $\alpha_i\in\{0,1\}$ point to the active copy; an agent will only read $x_i$ from the active copy $x_i^{(\alpha_i)}$; before an agent updates the components of $x_i$, it  obtains a memory lock to the \emph{inactive} copy  $x_i^{(1-\alpha_i)}$ to prevent other agents from simultaneously updating it; then after it finishes updating $x_i^{(1-\alpha_i)}$, flip the bit $\alpha_i$ so that other agents will begin reading from the updated copy. This approach never blocks any read of $x_i$, yet it  eliminates inconsistency.


%

\subsection{Straightforward generalization}\label{sec:general}
Our async-parallel coordinate update scheme \eqref{eqn:asyn_update}  can be  generalized to (overlapping) block coordinate updates  after a  change to the step size.\cut{under the same convergence analysis. Such updates can be applied when an orthogonal matrix is a part of the problem data.} Specifically, the scheme \eqref{eqn:asyn_update} can be generalized to
\begin{equation}
\label{eqn:asyn_ortho_update}
\textstyle x^{k+1} = x^k - \frac{\eta_k}{n p_{i_k}} \, (U_{i_k} \circ S)\hat{x}^{k},
\end{equation}
where $U_{i_k}$ is randomly drawn from a set of operators $\{U_1,\ldots,U_n\}$ ($n \leq m$),  $U_i:\cH\to\cH$, following the probability $P(i_k = i) = p_{i}$, $i=1,\ldots, n$ ($p_i>0$, and $\sum_{i=1}^np_i=1$). The operators must satisfy
$\sum_{i=1}^n U_i = I_\cH$ and $\sum_{i=1}^n \|U_ix\|^2 \leq C\|x\|^2$ for some $C>0$.

Let $U_i: x\mapsto(0,\ldots,0,x_i,0,\ldots,0),~i=1,\ldots, m$, which has $C=1$; then~\eqref{eqn:asyn_ortho_update} reduces to~\eqref{eqn:asyn_update}. If  $\cH$ is endowed with a  metric  $M$ such that $\rho_1\|x\|^2\leq \|x\|_M^2\leq \rho_2\|x\|^2$ (e.g., the metric  in the Condat-V\~u primal-dual splitting \cite{condat2013primal,vu2013splitting}), then we have
\begin{align*}
\sum_{i=1}^m \|U_ix\|_M^2 \leq  \rho_2 \sum_{i=1}^m\|U_ix\|^2=\rho_2 \|x\|^2\leq {\frac{\rho_2}{\rho_1}}\|x\|_M^2.
\end{align*}
In general,  multiple coordinates can be updated in \eqref{eqn:asyn_ortho_update}. Consider linear $U_i:x\mapsto (a_{i1}x_1,\cdots,a_{im}x_{m})$, $i=1,\ldots, m$, where $\sum_{i=1}^na_{ij}=1$ for each $j$. Then, for $C:=\max\left\{\sum_{i=1}^n a_{i1}^2,\cdots,\sum_{i=1}^n a_{im}^2\right\}$, we have
\begin{align*}
\sum_{i=1}^n \|U_ix\|^2 = \sum_{i=1}^n\sum_{j=1}^m a_{ij}^2\|x_j\|^2=\sum_{j=1}^m\sum_{i=1}^n a_{ij}^2\|x_j\|^2\leq C\|x\|^2.
\end{align*}

\subsection{Special cases}
If there is only one agent ($p=1$), ARock (Algorithm \ref{alg:asyn_core}) reduces to  randomized coordinate update, which includes the special case of randomized coordinate descent  \cite{nesterov2012rcd} for  convex optimization. Sync-parallel coordinate update is another special case of ARock corresponding to $\hat{x}^k\equiv x^k$. In both cases, there is no delay, i.e., $\tau=0$ and $J(k)=\emptyset$. In addition, the step size $\eta_k$ can be more relaxed. In particular,  if $p_i=\frac{1}{m}$, $\forall i$, then we can let $\eta_k=\eta$, $\forall k$, for any $\eta<1$, or $\eta< 1/\alpha$ when $T$ is $\alpha$-averaged (see Definition~\ref{def:monotone} for the definition of an $\alpha$-averaged operator).


%

\begin{subsection}{Related work}\label{sec:rel-wk}

Chazan and Miranker~\cite{chazan1969chaotic} proposed the first async-parallel method  in 1969. The method was designed for solving linear systems. Later, async-parallel methods have been successful applied in many fields, e.g., linear systems~\cite{avron2014revisiting,bethune2014performance,FSS1997asyn-addSch,rosenfeld1969case}, nonlinear problems~\cite{BMR1997asyn-multisplit,baudet1978asynchronous}, differential equations~\cite{aharoni2000parallel,AAI1998implicit,Chau20081126,donzis2014asynchronous}, consensus problems~\cite{LMS1986asynchronous,leifang_information_2005}, and optimization~\cite{hsieh2015passcode,liu2014asynchronous,liu2013asynchronous,tai2002convergence,zhang2014asynchronous}. We review the theory for  async-parallel fixed-point iteration and its applications.

\cut{The asynchronous parallel computing method (also called \emph{chaotic relaxation}) appears to be first used by Rosenfeld~\cite{rosenfeld1969case} to solve linear equations arising in electrical network problem. Numerical simulations there show that using more processors can save total running time and sometimes achieve near-linear speedup. Chazan and Miranker~\cite{chazan1969chaotic} systematically analyzed a general asynchronous iterative method for solving linear systems. Assuming boundedness of the delay, it guarantees convergence to a solution if the spectral radius of the (componentwise) absolute of the iterating matrix is strictly less than \emph{one}. Since then, the theory and application has been studied by many authors.}

\textbf{General fixed point problems.} \emph{Totally async-parallel}\footnote{``Totally asynchronous" means  no upper bound on the delays; however, other conditions are required, for example: each coordinate must be updated infinitely many times. By default, ``asynchronous" in this paper assumes a finite maximum delay.} iterative methods for a  fixed-point problem go back as early as to  Baudet~\cite{baudet1978asynchronous}, where the operator was assumed to be \emph{P-contraction}.\footnote{An operator $T: \mathbb{R}^n \rightarrow \mathbb{R}^n$ is P-contraction if $|T(x) - T(y)| \leq P | x - y|$, component-wise, where $|x|$ denotes the vector with components $|x_i|, ~ i=1, ..., n$, and $P\in\RR^{n\times n}$ is a nonnegative matrix with a spectral radius strictly less than 1.
} Later, Bertsekas~\cite{bertsekas1983distributed} generalized the P-contraction assumption and showed convergence. \cut{Papers~\cite{Baz200591,el1998flexible} provide a convergence result for async-parallel iterations with simultaneous reading and writing of the same set of components. Strikwerda considered the unbounded but stochastic delays~\cite{Strikwerda2002125}.} Frommer and Szyld~\cite{frommer2000asynchronous} reviewed the theory and applications of totally async-parallel iterations prior to 2000. This review summarized convergence results under the conditions in~\cite{bertsekas1983distributed}.
However, ARock can be applied to solve many more problems since our nonexpansive assumption, though not strictly weaker than P-contraction, is more pervasive.  
As opposed to  totally asynchronous methods, Tseng, Bertsekas, and Tsitsiklis~\cite{bertsekas1989parallel,TB1990partially} assumed \emph{quasi-nonexpansiveness}\footnote{An operator $T: \cH \rightarrow \cH$ is quasi-nonexpansive if $\|Tx - x^*\| \leq \|x - x^*\|$, $\forall x \in \cH$, $x^*\in \Fix T$.} and proposed an {async-parallel} 
method, converging under an additional assumption, which is   difficult to justify in general but can be established   for problems such as linear systems and strictly convex network flow problems \cite{bertsekas1989parallel,TB1990partially}. 

{The above  works assign coordinates  in a deterministic manner. Different from them, ARock is stochastic, works for nonexpansive operators, and is more applicable.}

\textbf{Linear, nonlinear, and differential equations.}
The first async-parallel method for solving linear equations was introduced by Chazan and Miranker in~\cite{chazan1969chaotic}. They proved that on solving linear systems, P-contraction was necessary and sufficient for convergence. The performance of the algorithm was studied by Iain et al.~\cite{bethune2014performance,rosenfeld1969case} on different High Performance Computing (HPC) architectures. 
Recently, Avron et al.~\cite{avron2014revisiting} revisited the async-parallel coordinate update and showed its linear convergence for solving positive-definite linear systems. 
Tarazi and Nabih~\cite{el1982some} extended the poineering work \cite{chazan1969chaotic} to solving nonlinear equations, and the async-parallel methods have also been applied \cut{Applications of async-parallel methods }for solving differential equations, \cut{can be found}e.g., in~\cite{aharoni2000parallel,AAI1998implicit,Chau20081126,donzis2014asynchronous}. Except for~\cite{avron2014revisiting}, all these methods are totally async-parallel with the P-contraction condition or its variants. \cut{except~\cite{avron2014revisiting} employs random selection to solve a special linear system of equations. }On solving a positive-definite linear system, \cite{avron2014revisiting} made assumptions similar to ours, and it obtained better linear convergence rate on that special problem. 

\textbf{Optimization.}
The first async-parallel coordinate update gradient-projection method was due to Bertsekas and Tsitsiklis~\cite{bertsekas1989parallel}. The method solves constrained optimization problems with a smooth objective and simple constraints. It was shown that the objective gradient  sequence converges to zero. Tseng~\cite{tseng1991rate-asyn} further analyzed the convergence rate and obtained local linear convergence based on the assumptions of isocost surface separation and a local Lipschitz error bound. Recently, Liu et al.~\cite{liu2013asynchronous} developed an async-parallel stochastic coordinate descent algorithm for minimizing convex smooth functions. Later, Liu and Wright~\cite{liu2014asynchronous} suggested an async-parallel stochastic proximal coordinate descent algorithm for minimizing convex composite objective functions. They established the convergence of the expected objective-error sequence for convex functions. Hsieh et al.~\cite{hsieh2015passcode} proposed an async-parallel dual coordinate descent method  for solving $\ell_2$ regularized empirical risk minimization problems. Other async-parallel approaches include asynchronous ADMM~\cite{hong2014distributed,wei2013on,zhang2014asynchronous,iutzeler2013asynchronous}. Among them,~\cite{wei2013on,iutzeler2013asynchronous} use an asynchronous clock, 
and~\cite{hong2014distributed,zhang2014asynchronous} use a central node to update the dual variable; they do not deal with delay or inconsistency.  Async-parallel stochastic gradient descent methods have also been considered~in~\cite{nedic2001distributed,recht2011hogwild}.

Our framework differs from the recent surge of the aforementioned sync-parallel and async-parallel coordinate descent algorithms (e.g.,~\cite{peng2013parallel,kyrola2011parallel,liu2013asynchronous,liu2014asynchronous,hsieh2015passcode,richtarik2015parallel}). While they  apply to convex function minimization, ARock covers more cases (such as ADMM, primal-dual, and decentralized methods) and also provides sequence convergence. 
In Section \ref{sec:application}, we will show that some of the existing async-parallel coordinate descent algorithms are special cases of ARock, through relating their optimality conditions to nonexpansive operators. Another difference is that the convergence of ARock only requires a  nonexpansive operator with  a fixed point, whereas properties such as strong convexity, bounded feasible set, and bounded sequence, which  are  seen in some of the recent literature for async-parallel convex minimization, are unnecessary.

\textbf{Others.} Besides solving equations and optimization problems, there are also applications of  async-parallel algorithms to optimal control problems~\cite{LMS1986asynchronous}, network flow problems~\cite{ESMG1996asyn-flex}, and consensus problems of multi-agent systems~\cite{leifang_information_2005}.

\cut{The pioneering work for asynchronous iteration is due to Chazan and Miranker~\cite{chazan1969chaotic}, who proposed an asynchronous method for solving linear equations with convergence guarantee. The theory and application has since been studied by many authors. Noteworthy, Bertsekas and Tsitsiklis~\cite{bertsekas1989parallel} introduced an asynchronous parallel framework for general fixed point problems. More recently, Frommer and Szyld~\cite{frommer2000asynchronous} gave a review for the theory and application for asynchronous iterations. Baudet~\cite{baudet1978asynchronous} analyzed asynchronous iterations of contracting operators, so their results are only applicable to a very restive type of equations. Bertsekas and Tsitsiklis~\cite{bertsekas1989parallel} proved the convergence for asynchronous fixed point iterations of nonexpansive maps. However the convergence analysis is based on very restrictive conditions and in a hard to interpret manner.}
\end{subsection}

\begin{subsection}{Contributions}
Our contributions and techniques are summarized below:
\begin{itemize}
\item ARock is the first async-parallel coordinate update framework for finding a fixed point to a nonexpansive operator. 
\item 
By introducing a new metric and establishing stochastic Fej\'er monotonicity, we show  that, with probability one,   ARock   converges to a point in the solution set; 
linear convergence is obtained for~\emph{quasi-strongly monotone} operators.
\item Based on ARock, we introduce an async-parallel algorithm for linear  systems, async-parallel ADMM algorithms for  distributed or decentralized  computing problems, as well as async-parallel operator-splitting algorithms for  nonsmooth minimization problems. Some problems are treated in they async-parallel fashion for the first time in history. The developed algorithms are \emph{not} straightforward modifications to their serial versions because their underlying nonexpansive operators must be identified before applying ARock.
\end{itemize}
\end{subsection}

\begin{subsection}{Notation, definitions, background of monotone operators}
Throughout this paper, $\cH$ denotes a separable Hilbert space equipped with the inner product $\langle \cdot, \cdot \rangle$ and norm $\| \cdot \|$, and $(\Omega, \cF, P)$ denotes the underlying probability space, where  $\Omega$, $\cF$, and $P$ are the sample space,  $\sigma$-algebra, and  probability measure, respectively. The map $x: (\Omega, \cF) \rightarrow (\cH, \cB)$, where  $\cB$ is the Borel $\sigma$-algebra, is an $\cH$-valued random variable. Let  $(x^k)_{k\ge 0}$ denote \emph{either} a sequence of deterministic points in $\cH$ \emph{or} a sequence of $\cH$-valued random variables, which will be clear from the context, and let $x_i\in\cH_i$ denote the $i$th coordinate of $x$. In addition, we let $\cX^k := \sigma (x^0, \hat{x}^1, x^1, ..., \hat{x}^k, x^k)$ denote the smallest $\sigma$-algebra generated by $x^0, \hat{x}^1, x^1, ..., \hat{x}^k, x^k$.  ``Almost surely'' is abbreviated as ``a.s.'', and the $n$ product space of $\cH$ is denoted by $\cH^n$.  We use $\to$ and $\rightharpoonup$ for strong convergence and weak convergence, respectively.

We define $\Fix T:= \{x\in \cH ~|~ Tx = x\}$ as the set of fixed points of operator $T$, and, in the product space,  we let $\vX^{*} :=\{(x^*, x^*, ..., x^*) ~|~ x^* \in \Fix T\}\subseteq \cH^{\tau+1}$.


\begin{definition}\label{def:quasi_lip}
An operator $T: \cH \rightarrow \cH$ is \emph{$c$-Lipschitz}, where $c\ge 0$, if it satisfies $\|Tx - T y\| \leq c\|x - y\|$, $\forall x, y \in \cH$. In particular,
 $T$ is  \emph{nonexpansive} if $c\le 1$, and  \emph{contractive} if $c<1$.
\cut{
In addition, when $\Fix T\not=\emptyset$, $T$ is \emph{quasi $\bar{c}$-Lipschitz},  $\bar{c}\ge 0$, if it satisfies $\|Tx - y\| \leq \bar{c}\|x - y\|$, $\forall x \in \cH$ and $\forall y\in \Fix T$. $T$ is  \emph{quasi-nonexpansive} if $\bar{c}\le 1$, and \emph{quasi-contractive} if $\bar{c}<1$.}
\end{definition}

\begin{definition}\label{def:monotone} Consider an operator $T: \cH \rightarrow \cH$.
\begin{itemize}
\item $T$ is \emph{$\alpha$-averaged} with $\alpha \in (0, 1)$, if there is a nonexpansive operator $R:\cH \rightarrow \cH$ such that $T = (1 - \alpha) I_\cH + \alpha R$, where $I_\cH: \cH\rightarrow \cH$ is the identity operator.
\item $T$ is \emph{$\beta$-cocoercive} with $\beta>0$, if \cut{it satisfies }$\dotp{x - y, Tx - Ty} \geq \beta \|Tx - Ty\|^2,~\forall x, y \in \cH.$
\item $T$ is \emph{$\mu$-strongly monotone}, where $\mu>0$, if it satisfies $\dotp{x - y, Tx - Ty} \geq \mu\|x - y\|^2,~\forall x, y \in \cH.$ When the inequality holds for $\mu=0$, $T$ is \emph{monotone}.
\item $T$ is \emph{quasi-$\mu$-strongly monotone}, where $\mu>0$, if it satisfies $\dotp{x - y, Tx } \geq \mu\|x - y\|^2,~\forall x \in \cH,y\in\zer T:=\{y\in \cH\mid Ty=0\}$. When the inequality holds for $\mu=0$, $T$ is \emph{quasi-monotone}.
\item $T$ is \emph{demicompact~\cite{petryshyn1966construction} at $x\in\cH$} if for every bounded sequence $(x^k)_{k\geq0}$ in $\cH$ such that $Tx^k-x^k\to x$, there exists a strongly convergent subsequence.
\end{itemize}
\end{definition}
Averaged operators are nonexpansive. By the Cauchy-Schwarz inequality, a  {$\beta$-cocoercive} operator is $\frac{1}{\beta}$-Lipschitz; the converse is generally untrue, but true for the gradients of convex differentiable functions. Examples are given in the next section.\cut{(However, if $f$ is a convex differentiable function, by the Baillon-Haddad theorem, $\nabla f$ is  {$\beta$-cocoercive} if and only if it is $\frac{1}{\beta}$-Lipschitz.) It is worth noting that we can extend the definitions of (quasi) (strongly) monotone operators to \emph{set-valued operators} by replacing $Tx$ and $Ty$ in their definitions with any entries $p\in Tx$ and $q\in Ty$, respectively. When $T$ is monotone (possibly set-valued) and $\lambda >0$, its resolvent $(I+\lambda T)^{-1}$ is single-valued and $\frac{1}{2}$-averaged and thus also nonexpansive. If $T$ is strongly monotone, then$ (I+\lambda T)^{-1}$ is contractive.}

\cut{\begin{definition} Let $T:\cH\to\cH$ be a \emph{nonexpansive} or \emph{averaged} operator such that $\Fix T\not=\emptyset$. Let $\lambda\in(0,1]$. The Krasnosel'ski\u i--Mann (KM) iteration generates the sequence $(x^k)_{k\ge 0}$ by
$$ x^{k+1} = x^k + \lambda (Tx^k-x^k).$$
\end{definition}
The sequence $(x^k)_{k\ge 0}$ converges weakly to a point in $\Fix T$, and $(Tx^k-x^k)_{k\ge 0}$ converges strongly to 0.

Let us illustrate the above concepts using a proper closed convex function $f:\cH\to\RR\cup\{+\infty\}$. The following results are well known (cf.  textbook \cite{bauschke2011convex}). The subdifferential $\partial f$ is set-valued and~maximally monotone, and thus  $(I+\lambda\partial f)^{-1}$ is single-valued and  $\frac{1}{2}$-averaged. For any $x\in\cH$, $(I+\lambda\partial f)^{-1}x=\argmin_y f(y)+\frac{1}{2\lambda}\|y-x\|^2$. Therefore, the proximal-point algorithm is a  KM iteration.

If $f$ is differentiable and $\nabla f$ is $L$-Lipschitz, then $\nabla f$ is $\frac{1}{L}$-cocoercive and, by \cite[Prop. 4.33]{bauschke2011convex}, the gradient descent operator $(I-\gamma\nabla f)$ is $\frac{\gamma L}{2}$-averaged, where $\gamma$ satisfies $0<\gamma <\frac{2}{L}$ so that $0<\frac{\gamma L}{2}<1$. Therefore, the gradient descent iteration  is a KM iteration.

In addition, other special cases of the KM iteration are the forward-backward splitting (FBS) iteration \cite{passty1979ergodic}, Douglas-Rachford splitting (DRS) iteration \cite{lions1979splitting}, relaxed Peaceman-Rachford splitting (PRS) \cite{lions1979splitting}, and a three-operator splitting iteration \cite{davis2015three}, which involve multiple nonexpansive or averaged operators. They have numerous applications in many areas of computational mathematics.
}
\cut{If $f$ is $\mu$-strongly convex, then $\partial f$ is $\mu$-strongly monotone and, by \cite[Prop. 23.11]{bauschke2011convex}, $(I+\lambda\partial f)^{-1}$ is $\frac{1}{1+\lambda\mu}$-contractive. In this case, the proximal-point algorithm is a  Picard iteration (of a contractive operator). }

\end{subsection}

\begin{section}{Applications}\label{sec:application}
In this section, we provide some applications that are special cases of the fixed-point problem~\eqref{eqn:fix_point_set}. For each application, we identify its  nonexpansive operator $T$ (or the corresponding operator $S$) and implement the conditions in Theorem~\ref{thm:simple_convergence}. For simplicity, we use the uniform distribution, $p_1=\cdots=p_m=1/m$, and apply the simpler update~\eqref{eqn:simple_asyn_update} instead of~\eqref{eqn:asyn_update}. 

\begin{subsection}{Solving linear equations}\label{sec:linearequations}
Consider the linear system $Ax = b,$
where $A \in \mathbb{R}^{m\times m}$ is a nonsingular matrix with nonzero diagonal entries. Let $A = D + R$, where $D$ and $R$ are the diagonal and off-diagonal parts of $A$, respectively. Let $M:=-D^{-1}R$ and $T(x):=M x + D^{-1} b$. Then the system $Ax = b$ is equivalent to the fixed-point problem $x = D^{-1} (b - R x)= :T(x)$, where $T$ is nonexpansive if the spectral norm $\|M\|_2$ satisfies $\|M\|_2\leq 1$.
\cut{Define the relaxed operator as
$$T_{\alpha}(x) = \big((1 - \alpha) I + \alpha \, T \big) x,$$
where $\alpha \in (0, 1)$. Note that $T_{\alpha}$ is $\alpha$-averaged, $x^{k+1} = T_{\alpha} (x^k)$ gives the relaxed Jacobi iteration.}The iteration $x^{k+1}= T(x^k)$ is widely known as the Jacobi algorithm. Let $S = I -T$. Each update  $S_{i_k}\hat{x}^k$ involves multiplying just the $i_k$th  row of $M$ to $x$ and adding the $i_k$th entry of $D^{-1} b$, so we arrive at the following algorithm.

\begin{algorithm}[H]\label{alg:asyn_linear_eqn}
\SetKwInOut{Input}{Input}\SetKwInOut{Output}{output}
\Input{$x^0 \in \mathbb{R}^n$,  $K>0$.}
set the global iteration counter $k=0$\;
 \While{$k < K$, every agent asynchronously and continuously}{
  select $i_k\in\{1,\ldots, m\}$ uniformly at random\;
  subtract $\frac{\eta_k}{a_{i_k i_k}} (\sum_{j} a_{i_k j} \, \hat x_j^k - b_{i_k})$ from  the component $x_{i_k}$ of the variable $x$\;

  update the global counter $k \leftarrow k+1$\;
 }
 \caption{ARock for linear equations}
\end{algorithm}

\begin{proposition}\label{prop:lip_to_sm}
{\cite[Example~22.5]{bauschke2011convex}} Suppose that $T$ is $c$-Lipschitz continuous with $c \in [0, 1)$. Then, $I - T$ is $(1 - c)$-strongly monotone.
\end{proposition}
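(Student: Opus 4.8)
The plan is to verify the strong monotonicity inequality directly from the definitions. We need to show $\dotp{x - y, (I-T)x - (I-T)y} \geq (1-c)\|x-y\|^2$ for all $x, y \in \cH$. Expanding the left-hand side, $\dotp{x-y,(I-T)x-(I-T)y} = \dotp{x-y,(x-y)-(Tx-Ty)} = \|x-y\|^2 - \dotp{x-y,Tx-Ty}$.

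The key step is to bound $\dotp{x-y,Tx-Ty}$ from above. By the Cauchy--Schwarz inequality, $\dotp{x-y,Tx-Ty} \leq \|x-y\|\,\|Tx-Ty\|$, and since $T$ is $c$-Lipschitz, $\|Tx-Ty\| \leq c\|x-y\|$. Combining these gives $\dotp{x-y,Tx-Ty} \leq c\|x-y\|^2$. Substituting back, $\dotp{x-y,(I-T)x-(I-T)y} \geq \|x-y\|^2 - c\|x-y\|^2 = (1-c)\|x-y\|^2$, which is exactly the claim that $I-T$ is $(1-c)$-strongly monotone (with $\mu = 1-c > 0$ since $c < 1$).

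There is essentially no obstacle here; the proof is a two-line application of Cauchy--Schwarz and the Lipschitz bound. The only thing to note is that $c \in [0,1)$ ensures $1-c > 0$, so the resulting constant is a legitimate strong-monotonicity modulus in the sense of Definition~\ref{def:monotone}. If one wanted the analogous statement for the quasi-strongly monotone notion (replacing $Ty$ by a fixed point and $Tx-Ty$ by $(I-T)x$ evaluated against $x - x^*$ with $x^* \in \Fix T$), the identical argument works verbatim using quasi-nonexpansiveness, which is relevant for connecting Proposition~\ref{prop:lip_to_sm} to the linear convergence hypothesis of Theorem~\ref{thm:simple_convergence}.
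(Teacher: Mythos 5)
Your proof is correct and is exactly the standard argument behind the result the paper simply cites (Bauschke--Combettes, Example 22.5): expand $\dotp{x-y,(I-T)x-(I-T)y}=\|x-y\|^2-\dotp{x-y,Tx-Ty}$ and bound the cross term by Cauchy--Schwarz together with the $c$-Lipschitz property. Your closing remark is also on point, since the paper later invokes this proposition in the quasi-contractive/quasi-strongly monotone form (after Proposition~\ref{prop:prox}), and the same one-sided computation with $y=x^*\in\Fix T$ gives that version verbatim.
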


Suppose $\|M\|_2<1$. Since $T$ is $\|M\|_2$-Lipschitz continuous, by Proposition~\ref{prop:lip_to_sm}, $S$ is $(1-\|M\|_2)$-strongly monotone. By Theorem~\ref{thm:strongly_monotone}, Algorithm~\ref{alg:asyn_linear_eqn} converges linearly. 

\cut{\begin{lemma}\label{lem:arock-lin}
Assume that $A$ is symmetric positive definite. Then, there exists a constant $\mu>0$ such that
\begin{equation}\label{eq:S-lin} \langle S(x)-S(y), x - y \rangle \ge \mu\|x-y\|^2,\quad\forall x,
\end{equation}
where $S(x)=x-(M x + D^{-1} b)$ is  defined in this subsection, i.e., $S$ is $\mu$-strongly monotone.
\end{lemma}
\begin{proof}
Note
\begin{align*}
S(x)=x-(Mx+D^{-1}b)=x-\big(-D^{-1} (A-D)x+D^{-1}b\big)=D^{-1}(Ax-b).
\end{align*}
Hence,~\eqref{eq:S-lin} reduces to
$$ \langle D^{-1}A (x - y), x - y \rangle \geq \mu \|x - y\|^2$$
which holds for $\mu=  \lambda_{\min} (D^{-1} A) $ with $\lambda_{\min} (D^{-1}A) $ being the smallest eigenvalue of $D^{-1} A$. This completes the proof.\hfill
\end{proof}}

\cut{\begin{align}\label{eqn:linear_eqn_solver}
x^{k+1} &= x^k - \eta_k \, U_k( S({\hat x}^{k}) ) \nonumber \\
             &= x^{k} - \eta_k \, U_k( \alpha (I - T)\hat x^{k} ) \\
             &=x^{k} - \eta_k \, \alpha \, U_k (\hat x^{k} - M \hat x^{k} - D^{-1} b) \nonumber.
\end{align}
Since $T_{\alpha}$ is $\alpha$-averaged, Theorem ~\ref{thm:convergence} guarantees the convergence of iteration~\eqref{eqn:linear_eqn_solver}.} \cut{When $A$ is symmetric, we know that $\rho(|A|) \leq \rho(A) = \|A\|_2$, so our convergence condition is weaker compared to the existing result by \cite{chazan1969chaotic} which requires $\rho(|M|) < 1$. In addition, when the smallest eigenvalue of $I - M$ is strictly positive, Theorem~\ref{thm:strongly_monotone} implies the linear convergence of our algorithm.} \cut{, then if $\lambda_{\min}>0$, then we have
$$\big\langle S(x) - S(y), x - y \big\rangle = \alpha \big\langle (I - M) (x-y), x - y \big\rangle \geq \alpha \,\lambda_{\min} \|x - y\|^2,$$
i.e., $S$ is strongly monotone with modulus $\alpha \, \lambda_{\min}$. Hence, if  $\lambda_{\min} > 0$, then the sequence $(x^k)$ generated by~\eqref{eqn:linear_eqn_solver} converges to the linear equation solution with linear rate.}
\end{subsection}

\cut {\begin{subsection}{Solving nonlinear equations}
Consider the following nonlinear system
\begin{equation}
\min_{x\in {\cH}} F(x) = 0,
\end{equation}
\end{subsection}

\begin{subsection}{Solving ordinary differential equations (ODE)}
Consider the following nonlinear system
\begin{equation}
\min_{x\in {\cH}} F(x) = 0,
\end{equation}

\end{subsection}
}

\begin{subsection}{Minimize convex smooth function}\label{subsec:smooth}
Consider the optimization problem
\begin{equation}\label{eq:probmin}
\Min_{x\in {\cH}} f(x),
\end{equation}
where $f$ is a closed proper convex differentiable function and $\nabla f$ is $L$-Lipschitz continuous, $L>0$. Let $S := \frac{2}{L} \, \nabla f$. \cut{ By the Baillon-Haddad theorem, $S$ is $\frac{1}{\alpha L}$-cocoercive\footnote{An operator $S:\cH\to\cH$ is \emph{$\beta$-cocoercive},  where $\beta>0$, if it satisfies $\dotp{x - y, Tx - Ty} \geq \beta \|Tx - Ty\|^2,~\forall x, y \in \cH.$}, and by Lemma~\ref{lemma:a-avg}, $T = I - S$ is $\frac{\alpha L}{2}$-averaged, where $\frac{\alpha L}{2}<1$.} As $f$ is  convex and differentiable, $x$ is a minimizer of $f$ if and only if $x$ is a zero of $ S$. Note that $S$ is $\frac{1}{2}$-cocoercive. By Lemma \ref{lemma:a-avg}, $T \equiv I-S$ is nonexpansive. Applying ARock, we have the following iteration:
\begin{equation}\label{eqn:cvx_itr}
x^{k+1} = x^k - \eta_k  S_{i_k} \hat{x}^{k},
\end{equation}
where $S_{i_k} x = \frac{2}{L}(0, ..., 0, \nabla_{i_k}f(x), 0, ..., 0)^T$. Note that $\nabla f$ needs a structure that makes it cheap to compute $\nabla_{i_k} f(\hat{x}^{k})$. 
Let us give two such examples
: (i) quadratic programming: $f(x)= \frac{1}{2} x^T A x - b^T x$, where $\nabla f(x) = Ax - b$ and $\nabla_{i_k} f(\hat{x}^{k})$ only depends  on a part of $A$ and $b$; (ii) sum of sparsely supported functions: $f=\sum_{j=1}^N  f_j$ and $\nabla f = \sum_{j=1}^N \nabla f_j$, where each  $f_j$ depends on just a few variables.

Theorem~\ref{thm:convergence} below guarantees the convergence of $(x^k)_{k\geq0}$ if $\eta_k \in [\eta_{\min}, \frac{1}{ 2\tau/\sqrt{m} +1})$. In addition, If $f(x)$ is \emph{restricted strongly convex}, namely, for any $x \in \cH$ and $x^* \in X^*$, where $X^*$ is the  solution set to~\eqref{eq:probmin}, we have
$\langle x- x^*, \nabla f(x) \rangle \geq \mu \|x - x^*\|^2$
for some $\mu>0$, then $S$ is quasi-strongly monotone with modulus $\mu$. According to Theorem~\ref{thm:strongly_monotone}, iteration~\eqref{eqn:cvx_itr} converges at a linear rate if the step size meets the condition therein.

Our convergence and rates are given in term of  the distance to the solution set $X^*$. In comparison, the results in the work \cite{liu2013asynchronous} are given in terms of objective error under the assumption of a uniformly bounded $(x^k)_{k\ge 0}$. In addition, their step size decays like $O(\frac{1}{\tau \rho^\tau})$ for some $\rho > 1$ depending on $\tau$, and our $O(\frac{1}{\tau})$ is better. Under similar assumptions, Bertsekas and Tsitsiklis~\cite[Section 7.5]{bertsekas1989parallel} also describes an algorithm for~\eqref{eq:probmin} and proves  only subsequence convergence~\cite[Proposition 5.3]{bertsekas1989parallel}  in $\RR^n$. 
\end{subsection}

\begin{subsection}{Decentralized consensus optimization}\label{sec:dgd}
Consider that $m$ agents in a connected  network   solve the  consensus  problem of minimizing $\sum_{i=1}^m f_i (x)$, where $x\in\RR^d$ is the shared variable and the  convex differentiable function $f_i$ is held privately by agent $i$. We assume that $\nabla f_i$ is $L_i$-Lipschitz continuous for all $i$. A decentralized gradient descent algorithm \cite{nedic2009distributed} can be developed based on the equivalent formulation
\begin{equation}\label{eqn:dect_problem_org}
\Min_{x_1, ..., x_m \in \mathbb{R}^d}  ~ \textstyle f(\vx) := \sum_{i=1}^m f_i (x_i),\quad \St~  W\vx = \vx,
\end{equation}
where $\vx = (x_1, ..., x_m)^T\in \RR^{m\times d}$ and $W \in \mathbb{R}^{m\times m}$ is the so-called mixing matrix satisfying: $W\vx = \vx$ if and only if $x_1=\cdots=x_m$. For  $i\not=j$, if $w_{i,j} \neq 0$, then agent $i$ can communicate with agent $j$; otherwise they cannot. We assume that $W$ is  symmetric and doubly stochastic. Then, the decentralized consensus algorithm~\cite{nedic2009distributed} can be expressed as $\vx^{k+1}=W\vx^{k}-\gamma\nabla f(\vx^k)=\vx^k-\gamma(\nabla f(\vx^k)+\frac{1}{\gamma}(I-W)\vx^k)$, where $\nabla f(\vx)\in \RR^{m\times d}$ is a matrix with its $i$th row equal to $(\nabla f_i(x_i))^T$; see~\cite{yuan2013convergence}. The computation of $W\vx^k$ involves communication between agents, and $\nabla f_i(x_i)$ is independently computed by each agent $i$. The  iteration is equivalent to  the gradient descent iteration applied to $\min_{\vx}  \sum_{i=1}^m f_i (x_i) + \frac{1}{2\gamma} \vx^T(I - W)\vx$. To apply our algorithm, we let $S := \frac{2}{L} \nabla F= \frac{2}{L} (\nabla f+\frac{1}{\gamma}(I-W))$ with $L = \max_{i} L_i + (1 - \lambda_{\min}(W))/\gamma$, where $\lambda_{\min} (A)$ is the smallest eigenvalue of $W$. Computing $S_{i}\hat{\vx}^k$ reduces to computing  $\nabla f_i(\hat{x}_i^k)$ and the $i$th entry of $W\hat{\vx}^k$ or $\sum_{j}w_{i,j}\hat{x}^k_j$, which involves only $\hat{x}_i^k$ and $\hat x_j^k$ from the neighbors of agent $i$. Note that since each agent $i$ can store its own $x_i$ locally, we have $\hat{x}^k_i\equiv x^k_i$.

If the agents are $p$ independent Poisson processes and that each agent $i$ has activation rate $\lambda_i$, then the probability that agent $i$ activates before other agents is equal to $\frac{\lambda_i}{\sum_{i=1}^p \lambda_i}$~\cite{larson1981urban} and therefore our random sample scheme holds and ARock applies naturally. The algorithm is summarized as follows:

\begin{algorithm}[H]\label{alg:asyn_dgd}
\SetKwInOut{Input}{Input}\SetKwInOut{Output}{output}
\Input{Each agent $i$ sets $x_i^0\in\RR^d$, $K>0$.}
 \While{$k < K$}{
  when an agent $i$ is activated, $x^{k+1}_{i} = x^k_{i} -  \frac{\eta_k}{L} \, (\nabla f_{i}( x_{i}^k) + \frac{1}{\gamma} (x^k_{i}  -  \sum_j w_{i,j} \, \hat x_j^k) )$\;
  increase the global counter $k \leftarrow k+1$\;
 }
 \caption{ARock for decentralized optimization~\eqref{eqn:dect_problem_org}}
\end{algorithm}

\end{subsection}

\begin{subsection}{Minimize smooth $+$ nonsmooth functions}\label{subsec:nonsmooth}
Consider the problem
\begin{equation}\label{eqn:prob_fbs}
\Min_{x\in \cH} f(x) + g(x),
\end{equation}
where $f$ is closed proper convex  and $g$ is convex and $L$-Lipschitz differentiable with $L>0$. Problems in the form  of~\eqref{eqn:prob_fbs} arise in statistical regression, machine learning, and signal processing and include well-known problems such as the  support vector machine, regularized least-squares, and regularized logistic regression. For any $x \in \cH$ and scalar $\gamma\in (0, \frac{2}{L})$,   define the proximal operator $\prox_f: \cH \rightarrow \cH$ and the reflective-proximal operator $\refl_f: \cH \rightarrow \cH$ as
\begin{equation}\label{eqn:prox}
\prox_{\gamma f} (x) := \argmin_{y \in \cH} f(y) + \frac{1}{2\gamma} \|y - x\|^2 \quad \text{~and~} \quad\refl_{\gamma f} := 2\prox_{\gamma f} - I_{\cH},
\end{equation}
respectively, and define the following forward-backward operator $\TFBS := \prox_{\gamma f}  \circ (I - \gamma \nabla g).$
Because $\prox_{\gamma f}$ is $\frac{1}{2}$-averaged and $(I - \gamma \nabla g)$ is $\frac{\gamma L}{2}$-averaged,  $\TFBS$ is $\alpha$-averaged for
$\alpha \in [\frac{2}{3}, 1)$ \cite[Propositions 4.32 and 4.33]{bauschke2011convex}.
 Define
$S := I - \TFBS = I - \prox_{\gamma f}  \circ (I - \gamma \nabla g)$.   When we apply Algorithm~\ref{alg:asyn_core} to $T=\TFBS$ to solve ~\eqref{eqn:prob_fbs}, and assume $f$ is separable in  all coordinates, that is, $f(x) = \sum_{i=1}^mf_i(x_i)$,  the update for the $i_k$th selected coordinate is
\begin{equation}\label{asyncFBS}
x_{i_k}^{k+1} = x_{i_k}^k - \eta_k  \,   \big( \hat{x}_{i_k}^k - \prox_{\gamma f_{i_k}}( \hat{x}_{i_k}^k - \gamma \nabla_{i_k} g (\hat{x}^{k}))\big),
\end{equation}
Examples of separable functions include $\ell_1$ norm, $\ell_2$ norm square, the Huber function, and the indicator function of box constraints, i.e., ${\{x| a_i \leq x_i \leq b_i,~\forall i \}}$.  They all have simple $\prox$ maps.
If $\eta_k \in [\eta_{\min}, \frac{1}{2\tau/\sqrt{m} +1})$, then the convergence   is guaranteed by Theorem ~\ref{thm:convergence}. To show  linear convergence, we need to assume that $g(x)$ is strongly convex. Then, Proposition~\ref{prop:prox} below shows that $\prox_{\gamma f}  \circ (I - \gamma \nabla g)$ is a quasi-contractive operator, and  by Proposition~\ref{prop:lip_to_sm}, operator $I - \prox_{\gamma f}  \circ (I - \gamma \nabla g)$ is quasi-strongly monotone. Finally, linear convergence and its rate follow from Theorem~\ref{thm:strongly_monotone}.
\begin{proposition}\label{prop:prox}
Assume that $f$ is a closed proper convex function, and $g$ is $L$-Lipschitz differentiable and strongly convex with modulus $\mu > 0$. Let $\gamma \in (0, \frac{2}{L})$. Then, both $I - \gamma \nabla g$ and $\prox_{\gamma f}  \circ (I - \gamma \nabla g)$ are quasi-contractive operators.
\end{proposition}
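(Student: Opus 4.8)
The plan is to handle the two claims separately, since the statement for $I-\gamma\nabla g$ is the core computation and the statement for $\prox_{\gamma f}\circ(I-\gamma\nabla g)$ follows by composing with a nonexpansive (indeed quasi-nonexpansive) map. First I would fix a reference point: let $x^*$ be the unique minimizer of $f+g$ (unique because $g$ is strongly convex), so that $0\in\nabla g(x^*)+\partial f(x^*)$, equivalently $x^*=\prox_{\gamma f}(x^*-\gamma\nabla g(x^*))$, i.e.\ $x^*\in\Fix\big(\prox_{\gamma f}\circ(I-\gamma\nabla g)\big)$. Then ``quasi-contractive'' means: there exists $\bar c<1$ with $\|Rx-x^*\|\le\bar c\|x-x^*\|$ for all $x$, where $R$ is the operator in question.

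The key step is the estimate for $R_g:=I-\gamma\nabla g$. Here I would use that $\nabla g$ is simultaneously $\mu$-strongly monotone and $L$-Lipschitz (hence, by the Baillon--Haddad theorem for gradients of convex functions, $\tfrac1L$-cocoercive). The standard one-step contraction bound is
\[
\|R_g x - R_g y\|^2 = \|x-y\|^2 - 2\gamma\,\langle x-y,\nabla g(x)-\nabla g(y)\rangle + \gamma^2\|\nabla g(x)-\nabla g(y)\|^2 .
\]
Applying $\mu$-strong monotonicity to the middle term and $\tfrac1L$-cocoercivity to bound $\|\nabla g(x)-\nabla g(y)\|^2\le L\,\langle x-y,\nabla g(x)-\nabla g(y)\rangle$, and then using $\langle x-y,\nabla g(x)-\nabla g(y)\rangle\ge\mu\|x-y\|^2$ once more on the leftover $\gamma^2 L$ term (valid since $\gamma L<2$ keeps the coefficient $-2\gamma+\gamma^2 L<0$), one gets
\[
\|R_g x - R_g y\|^2 \le \big(1 - \mu\gamma(2-\gamma L)\big)\|x-y\|^2 .
\]
Since $\gamma\in(0,\tfrac2L)$ gives $2-\gamma L\in(0,2)$ and $\mu>0$, the factor $\kappa:=1-\mu\gamma(2-\gamma L)$ lies in $[0,1)$ (one also checks $\kappa\ge0$ because $\mu\le L$ forces $\mu\gamma(2-\gamma L)\le 1$). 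Taking $y=x^*$ shows $R_g$ is quasi-contractive with constant $\sqrt\kappa<1$; in fact the bound holds for all pairs, so $R_g$ is genuinely contractive.

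Finally, to pass to $R:=\prox_{\gamma f}\circ R_g$: $\prox_{\gamma f}$ is firmly nonexpansive, hence nonexpansive, so $\|Rx-Rx^*\| = \|\prox_{\gamma f}(R_g x)-\prox_{\gamma f}(R_g x^*)\|\le\|R_g x - R_g x^*\|\le\sqrt\kappa\,\|x-x^*\|$, using $R_g x^* = x^*-\gamma\nabla g(x^*)$ and $\prox_{\gamma f}(R_g x^*)=x^*$. This gives quasi-contractivity of $R$ with the same constant. I expect the only delicate point to be invoking Baillon--Haddad (equivalently, recalling that an $L$-Lipschitz gradient of a convex function is automatically $\tfrac1L$-cocoercive) and then bookkeeping the inequality $\mu\le L$ to confirm $\kappa\in[0,1)$; everything else is a routine expansion of the squared norm. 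One could alternatively avoid Baillon--Haddad by a cruder bound $\gamma^2\|\nabla g(x)-\nabla g(y)\|^2\le\gamma^2L^2\|x-y\|^2$, which still yields a contraction factor $1-2\gamma\mu+\gamma^2L^2<1$ for $\gamma$ small enough, but the range of admissible $\gamma$ shrinks, so the cocoercivity route giving the full interval $(0,\tfrac2L)$ is preferable.
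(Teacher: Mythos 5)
Your proposal is correct and follows essentially the same route as the paper's proof: expand $\|(I-\gamma\nabla g)x-(I-\gamma\nabla g)x^*\|^2$, use the Baillon--Haddad cocoercivity bound together with $\mu$-strong monotonicity to get the factor $1-2\gamma\mu+\mu\gamma^2 L<1$ for $\gamma\in(0,\tfrac{2}{L})$, and then pass to $\prox_{\gamma f}\circ(I-\gamma\nabla g)$ via (firm) nonexpansiveness of $\prox_{\gamma f}$. Your extra remarks (identifying $x^*$ as the fixed point and checking $\kappa\ge 0$) are fine but not needed beyond what the paper already does.
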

\begin{proof}
We first show that $I - \gamma \nabla g$ is a quasi-contractive operator. Note
\begin{align*}
~ &\|(x - \gamma \nabla g (x)) - (x^* - \gamma \nabla g(x^*))\|^2 \\
= & \|x - x^*\|^2 - 2\gamma \langle x - x^*, \nabla g(x) - \nabla g(x^*) \rangle + \gamma^2 \|\nabla g(x) - \nabla g(x^*)\|^2 \\
\leq & \|x - x^*\|^2 - \gamma(2 - \gamma L)  \langle x - x^*, \nabla g(x) - \nabla g(x^*) \rangle \\
\leq & (1 - 2\gamma \mu + {\mu \gamma^2}L) \|x - x^*\|^2,
\end{align*}
where the first inequality follows from the  Baillon-Haddad theorem\footnote{Let $g$ be a convex differentiable function. Then, $\nabla g$ is $L$-Lipschitz if and only if it is $\frac{1}{L}$-cocoercive.} and the second one from the strong convexity of $g$.
Hence, \cut{we have
\begin{equation*}
\|(x - \gamma \nabla g (x)) - (x^* - \gamma \nabla g(x^*))\| \leq \sqrt{1 - 2\gamma \mu + {\mu \gamma^2}L} \, \|x - x^*\|,
\end{equation*}
which means that }$I - \gamma \nabla g$ is quasi-contractive if $0<\gamma<2/L$. Since $f$ is convex, $\prox_{\gamma f}$ is firmly nonexpansive, and thus we immediately have the quasi-contractiveness of $\prox_{\gamma f}  \circ (I - \gamma \nabla g)$ from that of $I-\gamma\nabla g$.
\hfill\end{proof}


\cut{\commwy{I think we can remove Remarks 1 and 2, and cite our forthcoming 2nd paper here.}
\begin{remark}
When $f = 0$, then~\eqref{eqn:fbs_itr} reduces to~\eqref{eqn:cvx_itr}. A special case is when $f(x)$ is an indicator function of a feasible set $S \subset \cH$. In this case $\prox_{\gamma f} = \Proj_{S}$, which is a projection to the feasible set $S$.
\end{remark}
\begin{remark}
The parallel asynchronous forward backward splitting requires the proximal operator $\prox_{\gamma f}$ can be component-wisely (or block-wisely) evaluated. Examples of those functions include $\ell_1$ norm, $\|x\|_{2,1}$ norm, huber function, $\|x\|_2^2$, and indicator function of box constraints $\cI_{\{x| a \leq x \leq b \} }(x)$. However, when $\prox_{\gamma f}$ is not separable, then evaluating $U_i(\prox_{\gamma f}  \circ (I - \gamma \nabla g)) (x)$ has the same complexity as evaluating $\prox_{\gamma f}  \circ (I - \gamma \nabla g) (x)$, which is not efficient. To avoid this, we can actually consider the backward forward splitting operator
\begin{equation*}
T_{FBS} :=   (I - \gamma \nabla g) \circ \prox_{\gamma f},
\end{equation*}
then we have the following iteration
\begin{equation}\label{eqn:bfs_itr}
z^{k+1} = z^k - \eta_k  \, U_k \Big( \hat z^k -  (I - \gamma \nabla g) \circ \prox_{\gamma f} (\hat{z}^{k}) \Big),
\end{equation}
then the solution can be recovered by $x^{k} = \prox_{\gamma f} (z^k)$. Applications of backward forward splitting doesn't require $\prox_{\gamma f}$ to be separable, however, it requires the proximal operator to be easy to compute, and its complexity is smaller than the evaluating $x_i - \gamma \nabla_i g (x)$, since each update requires evaluating $\prox_{\gamma f}$ once. Applications of backward splitting include $f(x)$ being the indicator function of some constraints such as $\{x ~|~ \|x\|_1 \leq 1\}$, $\{x ~|~ \|x\|_{2} \leq 1\}$, $\{ x ~|~ \sum_{i=1}^n x_i = 1, x_i \geq 0\}$. The proximal operators of those indicator functions can be easily evaluated with low computational complexity. If evaluating forward operator dominants the computational costs, we can expect linear speedup for the asynchronous algorithm.
\end{remark}
}

\end{subsection}

\begin{subsection}{Minimize nonsmooth $+$ nonsmooth functions}
Consider
\begin{equation}\label{fpg}
\Min_{x\in \cH} f(x) + g(x),
\end{equation}
where both $f(x)$ and $g(x)$ are closed proper convex and their $\prox$ maps are easy to compute. Define the Peaceman-Rachford \cite{lions1979splitting} operator:
$$\TPRS :=~\refl_{\gamma f}  \circ~\refl_{\gamma g}.$$
Since both $\refl_{\gamma f}$ and $\refl_{\gamma g}$ are nonexpansive, their composition $\TPRS$ is also nonexpansive.\cut{ Based on $T_{PRS}$ we define the following relaxed PRS operator
$$(T_{PRS})_{\alpha} := (1 - \alpha) \, I + \alpha \, T_{PRS},$$
which is a $\alpha$-averaged operator.} Let
$S :=  I - \TPRS.$
When applying ARock to $T=\TPRS$ to solve problem~\eqref{fpg}, the update~\eqref{eqn:asyn_ortho_update} reduces to:
\beq\label{aprs}z^{k+1} = z^k - \eta_k \, U_{i_k} \circ \big( I - ~\refl_{\gamma f}  \circ~\refl_{\gamma g}\big) \hat{z}^{k},
\eeq
where we use $z$ instead of $x$ since the limit $z^*$ of $(z^k)_{k\ge 0}$ is not a solution  to~\eqref{fpg}; instead, a solution  must be recovered via $x^*=\prox_{\gamma g}z^*$.
The convergence follows from Theorem ~\ref{thm:convergence} and that $\TPRS$ is nonexpansive. If either $f$ or $g$ is strongly convex, then $\TPRS$ is contractive and thus by Theorem~\ref{thm:strongly_monotone}, ARock converges linearly. Finer convergence rates follow from \cite{DavisYin2014,DavisYin2014c}. A naive implementation of~\eqref{aprs} is
\begin{subequations}\label{eqn:relaxed_prs}
\begin{align}
\hat x^{k} &= \prox_{\gamma g} (\hat z^{k}), \label{eqn:relaxed_prs-a} \\
\hat y^{k} & = \prox_{\gamma f} (2 \hat x^{k} - \hat z^{k}),  \label{eqn:relaxed_prs-b}\\
z^{k+1} & = z^k + 2\eta_k \, U_{i_k} (\hat y^{k} - \hat x^{k}),\label{eqn:relaxed_prs-c}
\end{align}
\end{subequations}
where $\hat x^{k}$ and $\hat y^{k}$ are intermediate variables. Note that the order in which the proximal operators are applied to  $f$ and $g$  affects both  $z^k$ \cite{YanYin2014} and whether coordinate-wise updates can be efficiently computed.
Next, we present two special cases of~\eqref{fpg} in Subsections~\ref{sec:Feasibility} and~\ref{sec:async-admm} and discuss how to efficiently implement the update~\eqref{eqn:relaxed_prs}.

\begin{subsubsection}{Feasibility problem}\label{sec:Feasibility}
Suppose that $C_1, ..., C_m$ are closed convex subsets of $\cH$ with a nonempty intersection. The problem is to find a point in the intersection. Let $\cI_{C_i}$ be the indicator function of the set $C_i$, that is, $\cI_{C_i}(x)=0$ if $x\in C_i$ and $\infty$ otherwise. The feasibility problem can be formulated as the following
\begin{align}
\Min_{x=(x_1,\ldots,x_m)\in \cH^m} ~& \sum_{i=1}^m \cI_{C_i}(x_i) + \cI_{\{x_1 = \cdots =x_m\}}(x)  \nonumber.
\end{align}
Let $z^k=(z^k_1,\ldots,z^k_m)\in \cH^m$, $\hat z^k=(\hat z^k_1,\ldots,\hat z^k_m)\in \cH^m$, and $\hat{\bar{z}}^k\in\cH$. We can implement~\eqref{eqn:relaxed_prs} as follows (see Appendix \ref{sec:appendix} for the step-by-step derivation):
\begin{subequations}\label{eqn:relaxed_prs_set}
\begin{align}
\label{eqn:relaxed_prs_set:a}
{\hat {\bar z}}^k &=\textstyle \frac{1}{m} \sum_{i=1}^m \hat z_i^k,  \\
\label{eqn:relaxed_prs_set:b}
\hat{y}_{i_k}^k & = \textstyle\Proj_{C_{i_k}} \big(2 {\hat {\bar z}}^k - \hat z^{k}_{i_k}\big),\\
\label{eqn:relaxed_prs_set:c}
z_{i_k}^{k+1} & = \textstyle z_{i_k}^k + 2\eta_k  (\hat y_{i_k}^{k} - {\hat {\bar z}}^k).
\end{align}
\end{subequations}
The update~\eqref{eqn:relaxed_prs_set} can be  implemented as follows. Let global memory hold $z_1,\ldots,z_m$, as well as  $\bar{z}=\frac{1}{m}\sum_{i=1}^mz_i$. At the $k$th update, an agent independently generates a random number $i_k\in\{1,\ldots,m\}$,  then reads $z_{i_k}$ as $\hat z_{i_k}^k$ and $\bar{z}$ as ${\hat {\bar z}}^k$, and finally computes $\hat{y}_{i_k}$ and updates $z_{i_k}$ in global memory according to~\eqref{eqn:relaxed_prs_set}. Since $\bar{z}$ is maintained in global memory, the agent  updates $\bar{z}$ according to $\bar{z}^{k+1} = \bar{z}^k+\frac{1}{m}(z_{i_k}^{k+1} - z_{i_k}^{k})$. This implementation saves each agent from computing~\eqref{eqn:relaxed_prs_set:a} or reading all $z_1,\ldots,z_m$. Each agent only reads $z_{i_k}$ and $\bar{z}$, executes~\eqref{eqn:relaxed_prs_set:b}, and updates  $z_{i_k}$ \eqref{eqn:relaxed_prs_set:c} and $\bar{z}$. 
\end{subsubsection}

\end{subsection}

\begin{subsection}{Async-parallel ADMM}\label{sec:async-admm}
This is another application of~\eqref{eqn:relaxed_prs}. Consider 
\beq \label{eqn:admm_form}
\Min_{x \in \cH_1,~ y \in \cH_2}~  f(x) + g(y)\quad\St~  Ax + By = b,
\eeq
where $\cH_1$ and $\cH_2$ are Hilbert spaces, $A$ and $B$ are bounded linear operators. We apply the update~\eqref{eqn:relaxed_prs} to the Lagrange dual of~\eqref{eqn:admm_form} (see~\cite{gabay1983chapter} for the derivation):
\begin{equation}\label{eq:lag-dual}
\Min_{w \in \cG}\, d_f(w) + d_g(w),
\end{equation}
where $d_f(w) := f^* (A^* w)$, $d_g(w) := g^*(B^* w) - \langle w, b \rangle$, and $f^*$ and $g^*$ denote the convex conjugates of $f$ and $g$, respectively. The proximal maps induced by  $d_f$ and $d_g$ can be computed via solving subproblems that involve only the original terms in~\eqref{eqn:admm_form}:  $z^+=\prox_{\gamma d_f}(z)$ can be computed by (see Appendix \ref{sec:appendix} for the derivation)
\begin{equation}\label{dualf}
\begin{cases}
x^+ \in \arg\min_x f(x) - \langle z, Ax \rangle + \frac{\gamma}{2} \|Ax\|^2, \\
z^+  = z - \gamma Ax^+,
\end{cases}
\end{equation}
and $z^+=\prox_{\gamma d_g}(z)$  by
\begin{equation}\label{dualg}
\begin{cases}
y^+ \in \arg\min_y g(y) - \langle z, By - b\rangle + \frac{\gamma}{2} \|By - b\|^2, \\
z^+  = z - \gamma (By^+ - b).
\end{cases}
\end{equation}
Plugging~\eqref{dualf} and~\eqref{dualg} into (\ref{eqn:relaxed_prs}) yields the following naive implementation
\begin{subequations}\label{eqn:asyn_admm}
\begin{align}
\label{eqn:asyn_admm:a}
\hat y^{k } &\in \arg\min_y g(y) - \langle \hat z^{k}, By - b \rangle + \frac{\gamma}{2}\|By -b\|^2, \\
\label{eqn:asyn_admm:b}
\hat w_{g}^{k} &= \hat z^{k} - \gamma (B \hat y^{k} - b), \\
\label{eqn:asyn_admm:c}
\hat x^{k} &\in \arg\min_x f(x) - \langle 2 \hat w_{g}^{k} -\hat z^{k}, Ax \rangle + \frac{\gamma}{2} \|Ax\|^2,  \\
\label{eqn:asyn_admm:d}
\hat w_{f}^{k} &=2 \hat w_{g}^{k} -\hat z^{k} - \gamma  A \hat x^k,\\
\label{eqn:asyn_admm:e}
z_{i_k}^{k+1} &= z_{i_k}^{k} + \eta_k (\hat w_{f,i_k}^{k} - \hat w_{g,i_k}^{k}).
\end{align}
\end{subequations}
Note that $2\eta_k$ in~\eqref{eqn:relaxed_prs-c} becomes $\eta_k$ in~\eqref{eqn:asyn_admm:e} because ADMM is equivalent to the Douglas-Rachford operator, which is the average of the Peaceman-Rachford operator and the identity operator~\cite{lions1979splitting}.
Under favorable structures,~\eqref{eqn:asyn_admm} can be implemented efficiently.
For instance, when $A$ and $B$ are block diagonal matrices and $f,g$ are corresponding block separable functions,  steps~\eqref{eqn:asyn_admm:a}--\eqref{eqn:asyn_admm:d} reduce to independent computation for each $i$. Since only $\hat w_{f,i_k}^{k}$ and $\hat w_{g,i_k}^{k}$ are needed to update the main variable $z^k$,  we only need to compute~\eqref{eqn:asyn_admm:a}--\eqref{eqn:asyn_admm:d} for the $i_k$th block. This is exploited in distributed and decentralized ADMM in the next two subsections.
\end{subsection}

\begin{subsubsection}{Async-parallel ADMM for consensus optimization}
Consider the consensus optimization problem:
\beq\label{eqn:consensus_problem}
\Min_{x_i, y \in \cH} ~ \textstyle\sum_{i=1}^m f_i(x_i)\quad \St~  x_i - y = 0, \quad\forall i = 1, ..., m,
\eeq
where $f_i(x_i)$ are proper close convex functions. Rewrite~\eqref{eqn:consensus_problem} to the ADMM form:
 \begin{align}\label{eqn:consensus_admm_form}
\Min_{x_i, y \in \cH} ~~& \textstyle\sum_{i=1}^m f_i(x_i)  + g(y)\nonumber \\
                        \St~ &
                        \begin{bmatrix} I_{\cH} & 0 & \cdots &0\\
                                0 & I_{\cH} &\cdots & 0\\
                                & & \ddots & \\
                                0 & 0 &\cdots & I_{\cH}
                        \end{bmatrix}
                        \begin{bmatrix} x_1 \\ x_2\\ \vdots \\x_m\\
                        \end{bmatrix}
                        -
                        \begin{bmatrix} I_{\cH} \\ I_{\cH}\\ \vdots \\ I_{\cH}\\
                        \end{bmatrix}
                        y = 0,
\end{align}
where $g = 0$. Now apply the async-parallel ADMM~\eqref{eqn:asyn_admm} to~\eqref{eqn:consensus_admm_form} with dual variables  $z_1, ..., z_m \in \cH$. In particular, the update~\eqref{eqn:asyn_admm:a}, \eqref{eqn:asyn_admm:b}, \eqref{eqn:asyn_admm:c}, \eqref{eqn:asyn_admm:d} reduce to
\begin{align}\label{eqn:async-par-final}
\hat{y}^k&=\textstyle\argmin_y \big\{ \sum_{i=1}^m \langle \hat z_i^k, y \rangle + \frac{\gamma m}{2} \|y\|^2 \big\}= -\frac{1}{\gamma m} \sum_{i=1}^m \hat z_i^k \cr
(\hat w_{d_g}^{k})_i &= \hat z_i^{k} + \gamma \, \hat y^k \\
\hat x_i^{k} &= \textstyle\argmin_{x_i} \big\{ f_i(x_i) - \langle 2 (\hat w_{d_g}^{k})_i -\hat z_i^{k}, x_i \rangle + \frac{\gamma}{2} \|x_i\|^2 \big\}, \cr
(\hat w_{d_f}^{k})_i &=2 (\hat w_{d_g}^{k})_i -\hat z_i^{k} - \gamma \, \hat x_i^k \nonumber
\end{align}
Therefore, we obtain the following async-parallel ADMM algorithm for  the  problem~\eqref{eqn:consensus_problem}. This algorithm applies to all the distributed applications  in~\cite{boyd2011distributed}. 

\begin{algorithm}[H]\label{alg:asyn_consensus}
\SetKwInOut{Input}{Input}\SetKwInOut{Output}{output}
\Input{set shared variables $y^0, z_i^0, ~\forall i$, and $K>0$.}
 \While{$k < K$ every agent asynchronously and continuously}{
  choose $i_k$ from $\{1, ..., m\}$ with equal probability\;
  evaluate $ (\hat w_{d_g}^{k})_{i_k}$, $\hat x_{i_k}^k$, and $(\hat w_{d_f}^{k})_{i_k}$ following~\eqref{eqn:async-par-final}\;
  update $z_{i_k}^{k+1} = z_{i_k}^{k} +  \eta_k \,  ((\hat w_{d_f}^{k})_{i_k} - (\hat w_{d_g}^{k})_{i_k})$\;
  update $ y^{k+1} =  y^k + \frac{1}{\gamma m} (z_{i_k}^k - z_{i_k}^{k+1})$\;
  update the global counter $k \leftarrow k+1$\;
 }
 \caption{ARock for consensus optimization}
\end{algorithm}

\begin{subsubsection}{Async-parallel ADMM for decentralized optimization}\label{subsec:async_admm_des}
Let $V=\{1, ..., m\}$ be a set of agents and $E = \{(i, j) ~|~ \text{if agent $i$ connects to agent $j$}, i < j\}$ be the set of undirected links between the agents.  Consider the following decentralized consensus optimization problem on the graph $G= (V, E)$:
\begin{equation}\label{eqn:des_prob}
\Min_{x_1, ..., x_m \in \mathbb{R}^d}  ~ \textstyle f(x_1, \ldots, x_m) := \sum_{i=1}^m f_i (x_i),\quad
                \St~  x_i = x_j, ~ \forall (i,j) \in E,
\end{equation}
where $x_1, ..., x_m \in \mathbb{R}^d$ are the local variables and each agent can only communicate with its neighbors in $G$. By introducing the auxiliary variable $y_{ij}$ associated with each edge $(i, j) \in E$, the problem~\eqref{eqn:des_prob} can be reformulated as:
\begin{equation}\label{eqn:des_prob2}
\Min_{x_i, y_{ij}} ~\textstyle\sum_{i=1}^m f_i (x_i),\quad \St~  x_i = y_{ij}, ~x_j = y_{ij}, \quad \forall (i, j) \in E.
\end{equation}
Define $x = (x_1, ..., x_m)^T$ and $y = (y_{ij})_{(i, j) \in E} \in \mathbb{R}^{|E|d}$ to rewrite~\eqref{eqn:des_prob2} as
\begin{equation}\label{eqn:des_prob3}
\Min_{x, y} ~ \textstyle\sum_{i=1}^m f_i (x_i),\quad \St ~ Ax + By = 0,
\end{equation}
for proper matrices $A$ and $B$.
Applying the async-parallel ADMM~\eqref{eqn:asyn_admm} to~\eqref{eqn:des_prob3} gives rise to the following simplified update:
Let $E(i)$ be the set of edges connected with agent $i$ and $|E(i)|$ be its cardinality. Let $L(i) = \{j ~|~ (j, i) \in E(i), j < i\}$ and $R(i) = \{j ~|~ (i, j) \in E(i), j > i\}$. To every pair of constraints $x_i = y_{ij}$ and $x_j = y_{ij}$, $(i,j)\in E$, we associate the dual variables $z_{ij,i}$ and $z_{ij,j}$, respectively. Whenever some agent $i$ is activated, it calculates
\begin{subequations}\label{eqn:asyn_admm_des}
\begin{align}
\label{eqn:asyn_admm_des_a}
 \hat x^{k}_{i}  = &\argmin_{x_i} f_i(x_i) +  \big(\sum_{l \in L(i)}\hat z^k_{li, l}  + \sum_{r\in R(i)}\hat z^k_{ir, r} \big) x_i +  \frac{\gamma}{2} |E(i)|\cdot \|x_i\|^2,  \\\label{eqn:asyn_admm_des_b}
 z_{li, i}^{k+1} = &z_{li, i}^k -   \eta_k (( \hat z_{li, i}^k + \hat z_{li, l})/2 +  \gamma \hat x^k_i), \quad\forall l \in L(i),\\
z_{ir, i}^{k+1} = &z_{ir, i}^k -  \eta_k (( \hat z_{ir, i}^k + \hat z_{ir, r})/2 +  \gamma \hat x^k_i), \quad\forall r \in R(i).
\label{eqn:asyn_admm_des_c}
\end{align}
\end{subequations}
We present the algorithm based on
\eqref{eqn:asyn_admm_des} for problem~\eqref{eqn:des_prob} in Algorithm~\ref{alg:asyn_decentralize}.

\begin{algorithm}[H]\label{alg:asyn_decentralize}
\SetKwInOut{Input}{Input}\SetKwInOut{Output}{output}
\Input{Each agent $i$ sets the dual variables $z^0_{e, i}=0$ for  $e \in E(i)$, $K>0$.}
 \While{$k < K$, any activated agent $i$}{
 (previously received $\hat z_{li, l}^k$ from neighbors $l \in L(i)$ and $\hat z_{ir,r}^k$ from  $r \in R(i)$)\;
 update $\hat x_i^{k}$ according to~\eqref{eqn:asyn_admm_des_a}\;
 update $z_{li, i}^{k+1}$ and $z_{ir, i}^{k+1}$ according to~\eqref{eqn:asyn_admm_des_b} and~\eqref{eqn:asyn_admm_des_c}, respectively\;
 send $z_{li, i}^{k+1}$ to neighbors $l \in L(i)$ and  $z_{ir, i}^{k+1}$ to neighbors $r \in R(i)$\;
 }
 \caption{ARock for the decentralized problem~\eqref{eqn:des_prob2}}
\end{algorithm}
Algorithm~\ref{alg:asyn_decentralize} activates one agent at each iteration and updates all the dual variables associated with the agent. In this case, only one-sided communication is needed, for sending the updated dual variables in the last step. We allow this communication to be delayed in the sense that agent $i$'s neighbors may be activated and start their computation before receiving the latest dual variables from agent $i$.

Our algorithm is different from the asynchronous ADMM algorithm by Wei and Ozdaglar~\cite{wei2013on}. Their algorithm activates an edge and its two associated agents at each iteration and thus requires two-sided communication at each activation.
We can recover their algorithm as a special case by activating an edge $(i,j)\in E$ and its associated agents $i$ and $j$ at each iteration, updating  the  dual variables $z_{ij,i}$ and $z_{ij,j} $ associated with the edge, as well as computing the intermediate variables $x_i$, $x_j$, and $y_{ij}$. The updates are derived from~\eqref{eqn:des_prob3} with the orders of $x$ and $y$ swapped. Note that \cite{wei2013on} does not consider the situation that adjacent edges are  activated in a short period of time, which may cause overlapped computation and delay communication. Indeed, their algorithm corresponds to $\tau=0$ and the corresponding stepsize $\eta_k\equiv1$. Appendix \ref{sec:appendix2} presents the steps to derive the algorithms in this subsection.
\end{subsubsection}

\cut{
\begin{subsubsection}{The generalized lasso}
The generalized Lasso is the following model
\begin{equation}
\min_{x} \lambda \, \|Dx\|_1 + \frac{1}{N}  \sum_{i=1}^N \ell_{i}(x),
\end{equation}
where $N$ is the number of samples, $x \in \mathbb{R}^n$ is the feature vector, $\ell_i (x)$ is the loss function, such as square loss, logistic loss or hinge loss. $D$ is the penalty matrix specifying the desired structured sparsity pattern of $x$. Different setting of $D$ reduces to different models such as fused lasso, trend filtering, and graph-guided fused lasso. The asynchronous ADMM can be applied by letting
$$f_i(x) =  \frac{\lambda}{M} \|Dx\|_1 + \frac{1}{N}  \sum_{j= i N/M}^{(i+1)N/M} \ell_{j}(x),$$
where $M$ is the number of workers. We assign $N/M$ samples to each worker. In practice, since the subproblems do not have closed form solution, they can be solved by using some inexact ADMM method \cite{zhang2011unified}.
\end{subsubsection}
}

\end{subsubsection}

\cut{
\begin{subsection}{Asynchronous Proximal Jacobi ADMM}
Consider the following problem,
\begin{align}\label{eqn:jacobi_admm_problem}
\min_{x_i, x} & \sum_{i=1}^m g_i(x_i) + f(x) \nonumber \\
                        \text{s.t.}~~ & A_1 x_1 + A_2 x_2 + \cdots + A_m x_m = b,
\end{align}
where $x_i \in \cH_i$, $x = (x_1, ..., x_m)^T \in \cH$, $b \in \cH_l$, $f(x):\cH \rightarrow R$ is a convex differentiable function, $g_i(x_i): \cH_i \rightarrow R$ are closed proper convex functions and $A_i: \cH_i \rightarrow \cH_l$ are linear operators. Define the Lagrangian function
\begin{equation}
L(x, \lambda) = \sum_{i=1}^m f_i(x_i) + g(x)  - \langle x_{m+1}, \sum_{i=1}^m A_i x_i - b \rangle,
\end{equation}
where $x_{m+1}$ is the Lagrangian multiplier. The optimality condition for~\eqref{eqn:jacobi_admm_problem} is the following
\begin{align}\label{eqn:jacobi_admm_kkt}
                        \begin{bmatrix} 0 \\ 0\\ \vdots \\ 0 \\ 0 \\
                        \end{bmatrix}
                         \in
                        \begin{bmatrix}
                        \nabla_1 f(x) + \partial g_1(x_1) \\
                        \nabla_2 f(x) + \partial g_2(x_2) \\
                        \vdots\\
                        \nabla_m f(x) + \partial g_m(x_m) \\
                        0 \\
                        \end{bmatrix}
                        +
                        \underbrace{
                        \begin{bmatrix} 0 & 0 & \cdots &0 &-A_1^*\\
                                0 & 0 &\cdots &0& -A_2^*\\
                                \vdots& \vdots& \vdots & \vdots &\vdots\\
                                0 & 0 &\cdots &0& -A_m^* \\
                                A_1 & A_2 & \cdots &A_m& 0 \\
                        \end{bmatrix}
                        }_{Q}
                        \begin{bmatrix} x_1 \\ x_2\\ \vdots \\x_m\\ x_{m+1} \\
                        \end{bmatrix}
                        -
                        \begin{bmatrix} 0 \\ 0\\ \vdots \\ 0 \\ b \\
                        \end{bmatrix}.
\end{align}
We know that $x$ is the optimal solution of~\eqref{eqn:jacobi_admm_problem} if and only if $x_1, ..., x_m, x_{m+1}$ satisfies~\eqref{eqn:jacobi_admm_kkt}. Define $z := (x_1, ..., x_m, x_{m+1})$, $F(z) := f(z)$, $G(z) := \sum_{i=1}^m g_i (x_i)$ and $Q$ is defined as shown in~\eqref{eqn:jacobi_admm_kkt}, then finding $z$ satisfying~\eqref{eqn:jacobi_admm_kkt} is equivalent to the following monotone inclusion problem
\begin{equation}\label{eqn:jacobi_admm_monotone}
\text{find $z$ such that } \quad 0 \in \nabla F(z) + \partial G(z) + Qz.
\end{equation}
Introduce a symmetric positive definite matrix $W$, and apply the forward backward splitting operator to this problem, then the monotone inclusion problem~\eqref{eqn:jacobi_admm_kkt} is equivalent to the following fixed point problem
\begin{equation}\label{eqn:jacobi_admm_fixed_point}
\text{find $z$ such that } z = J_{W^{-1} (\partial G + Q)} \big(z - W^{-1} \nabla F(z) \big) ,
\end{equation}
where $J_{A} := (I_{\cH} + A)^{-1}$ is called the resolvent of operator $A$. Let $T z = J_{W^{-1} (\partial G + Q)} \big(z - W^{-1} \nabla F(z) \big)$, we know that $T$ is a $\alpha$-averaged operator in the norm $\|\cdot\|_W$. Then we can let $S = I - T$, and apply the parallel asynchronous algorithm with $U_k$ as defined in Remark~\ref{remark:pd}, we have the following iteration
\begin{equation}\label{eqn:asyn_jacobi_admm_fixed_point}
z^{k+1} = z^k - \eta_k \, U_k \Big( \hat z^k - J_{W^{-1} (\partial G + Q)} \big(\hat z^k - W^{-1} \nabla F(\hat z^k) \big) \Big).
\end{equation}
Now we show that iteration~\eqref{eqn:asyn_jacobi_admm_fixed_point} corresponds to an asynchronous version of the proximal Jacobi ADMM method. Let
\begin{equation}\label{eqn:def_w}
W =
        \begin{bmatrix}
        P - \sigma A^T A & 0 \\
        0         & \frac{1}{\sigma} I \\
        \end{bmatrix},
\end{equation}
where $P = diag(P_1, ..., P_m)$, $P_i: \cH_i \rightarrow \cH_i$ are easy to invert symmetric matrices and $A = (A_1, ..., A_m)$. For convergence purposes, we require $P \succ \sigma A^T A$ so that $W$ is symmetric positive definite. The reason for choosing such a $W$ is to decouple $x_1, ..., x_m$ and $\lambda$. Define
\begin{equation}
\hat{y}^k := J_{W^{-1} (\partial G + Q)} \big(\hat z^k - W^{-1} \nabla F(\hat z^k) \big),
\end{equation}
where $\hat y^k = (\hat y_1^k, ..., \hat y_m^k, \hat y_{m+1}^k)$,
then we have
\begin{equation}\label{eqn:update_y}
W \hat y^k = W \hat z^k - \nabla F(\hat z^k) -  \tilde \nabla G(\hat y^k) - Q \hat y^k,
\end{equation}
where $\tilde \nabla G(\hat y^k) \in \partial G(\hat y^k)$. Substituting~\eqref{eqn:def_w} to~\eqref{eqn:update_y} gives the following updates
\begin{equation}\label{eqn:update_y_i}
\begin{cases}
\hat p^k = \sum_{j=1}^m A_j \hat x_j^k  \\
\hat y_i^k = \argmin_{x_i}  g_i(x_i) + \big\langle \nabla_i f(\hat x^k), x_i - \hat x_i^k \big\rangle + \big \langle \sigma A_i^T (p^k - b - \frac{\hat x_{m+1}^k}{\sigma}), x_i \big\rangle + \frac{1}{2} \|x_i - \hat x_i^k\|_{P_i}^2, \forall i = 1, ..., m \\
\hat y_{m+1}^k = \hat x_{m+1}^k - \sigma \sum_{i=1}^m A_i \hat y_i^k
\end{cases}
\end{equation}
{\color{red} [$\hat y^{k}_{m+1}$ depends on the $y^k_{i}$, for $i=1, ..., m$]}
If we let $P_i$ be a diagonal matrix, assume $\nabla_i f$ can be evaluated without calculating $\nabla f$, and assume $g_i(x_i)$ is simple functions, then the first step has closed form solution. Substituting $\hat y^k$ to~\eqref{eqn:asyn_jacobi_admm_fixed_point}, we have the following updates
\begin{equation}
z^{k+1} = z^k - \eta_k \, U_k (\hat z^k - \hat y^k).
\end{equation}

To conclude, we have the following asynchronous proximal Jacobi ADMM algorithm for solving~\eqref{eqn:jacobi_admm_problem}.

\begin{algorithm}[H]\label{alg:asyn_jacobi_admm}
\SetAlgoLined
\SetKwInOut{Input}{Input}\SetKwInOut{Output}{output}
\Input{Initial points $x_i^0, \lambda^0, y_i^0, p^0 = \sum_{i=1}^m A_i x_i^0$ are shared variables, number of iterations $K$}
 \While{$k < K$}{
  choose $i_k$ from $\{1, ..., m, m+1\}$ with equal probability\;
  read ${\hat x_{i_k}^k}$ and $\hat p^k$ from global memory\;
  evaluate $\hat y_{i_k}^k$ according to~\eqref{eqn:update_y_i}\;
  update $x_{i_k}^{k+1} =x_{i_k}^k - \eta_k \, (\hat x_{i_k}^k - \hat y_{i_k}^k)$\;
  \If{$i_k\leq m$}{
  update $p^{k+1} = p^k + A_{i_k} (x_{i_k}^{k+1} - \hat x_{i_k}^k)$\;
  }
  $k \leftarrow k+1$\;
 }
 \caption{Asynchronous proximal Jacobi ADMM for consensus optimization}
\end{algorithm}
Solve linear programming problem in standard form
\begin{align}\label{eqn:jacobi_admm_problem}
\min_{x} ~&   c^T x \nonumber \\
                        \text{s.t.}~ & A x = b \\
                                            & x \geq 0. \nonumber
\end{align}

Solve the Basis pursuit problem
\begin{align}\label{eqn:jacobi_admm_problem}
\min_{x} ~&  \|x\|_1 \nonumber \\
                        \text{s.t.}~ & A x = b.
\end{align}

Consider a classification problem. Given a datasets $\{(a_1, b_1), ..., (a_m, b_m)\}$ where $(a_i, b_i), i=1,..., m, a_i \in \mathbb{R}^n, b_i \in \{-1, 1\}$ are instance-label pairs. We use the following dual SVM method to train a classifier,
\begin{align}\label{eqn:jacobi_admm_problem}
\min_{x} ~& \frac{1}{2} x^T A x - 1^T x \nonumber \\
                        \text{s.t.}~ & \sum_{i=1}^n b_i x_i = 0 \\
                                            & 0 \leq x \leq C, \nonumber
\end{align}
where $a_{ij} = b_i b_j k(a_i, a_j)$, where $k(\cdot, \cdot)$ is a kernel function. All of the previously listed problems can be solved by using Algorithm~\ref{alg:asyn_jacobi_admm}.
\end{subsection}
}
\end{section}

\cut{\begin{section}{Assumptions}
To derive the convergence for Algorithm~\ref{alg:asyn_core}, we make the following two assumptions.

\begin{assumption}[Bounded delay] We assume that there is an upper bound $\tau$ on the delay, that is, $J(k)\subset \{k-1,k-2,\cdots,k-\tau\}$. 
\end{assumption}



\begin{assumption}[Sampling order vs updating order]
We assume that the sampling order is the same as the updating order.
\end{assumption}

\begin{remark}
Sampling order can be different from the actual updating order. However, if the delay is independent with the random choice of blocks, the actual block of coordinates will still be updated with the same probability. Hence, this assumption does not lose generality. 
\end{remark}

\end{section}
}

\begin{section}{Convergence}
We establish weak and strong convergence  in Subsection~\ref{sec:weak} and  linear convergence in Subsection~\ref{sec:linear}. Step size selection is also discussed.

\subsection{Almost sure convergence}\label{sec:weak}
\begin{assumption} Throughout the our analysis, we assume $p_{\min}:=\min_ip_i>0$ and
\begin{equation}\label{eq:independence}
\Prob(i_k=i\,|\,\cX^k)=\Prob(i_k=i)=p_i,\quad\forall i,k.
\end{equation} 
\end{assumption}
We let $|J(k)|$ be the number of elements in $J(k)$ (see Subsection~\ref{subset:aync_mem}).
Only for the purpose of analysis, we define the (never computed) full update at $k$th iteration:
\begin{align}\label{eqn:def_bar_x}
\bar{x}^{k+1} := x^k - \eta_k S\hat{x}^{k}.
\end{align}
Lemma~\ref{lemma:a-avg} below shows that $T$ is nonexpansive if and only if $S$ is ${1/2}$-cocoercive.
\begin{lemma}\label{lemma:a-avg}
Operator $T:\cH\to\cH$ is nonexpansive if and only if $S = I - T$ is {${1}/{2}$-cocoercive}, i.e., $\langle x-y,Sx- Sy\rangle \ge \frac{1}{2}\|Sx- Sy\|^2, {\forall~x,y\in\cH}$.
\end{lemma}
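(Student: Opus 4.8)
The plan is to prove the equivalence via the standard characterization of nonexpansive operators through their displacement maps. The key algebraic identity I would exploit is that for any $x, y \in \cH$, writing $u = x - y$ and $v = Sx - Sy = (x-y) - (Tx - Ty)$, we have $Tx - Ty = u - v$, so that
\[
\|Tx - Ty\|^2 = \|u - v\|^2 = \|u\|^2 - 2\langle u, v\rangle + \|v\|^2.
\]
Since $\|x - y\|^2 = \|u\|^2$, the inequality $\|Tx - Ty\|^2 \le \|x-y\|^2$ is therefore equivalent to $-2\langle u, v\rangle + \|v\|^2 \le 0$, i.e. to $\langle u, v\rangle \ge \tfrac12 \|v\|^2$, which is exactly $\langle x - y, Sx - Sy\rangle \ge \tfrac12 \|Sx - Sy\|^2$.

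First I would fix arbitrary $x, y \in \cH$ and introduce the shorthand $u := x-y$, $v := Sx - Sy$, recording the substitution $S = I - T$ so that $Tx - Ty = u - v$. Second, I would expand $\|Tx - Ty\|^2$ using the inner product as above. Third, I would observe that each side of the desired equivalence holds for all $x,y$ if and only if the corresponding per-pair inequality holds for all $u,v$ arising this way, and that the two per-pair inequalities are term-by-term equivalent after cancelling $\|u\|^2$ from both sides. Running the chain of equivalences in both directions gives ``nonexpansive $\iff$ $\tfrac12$-cocoercive.'' This is really a one-line computation once the displacement substitution is in place, so I would keep it terse.

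There is essentially no obstacle here; the only thing to be careful about is the logical quantifier bookkeeping — both properties are universally quantified over pairs $(x,y)$, and since the algebraic equivalence is an identity valid for \emph{each} fixed pair, the ``for all'' quantifiers pass through cleanly in both directions. (One could alternatively cite \cite[Proposition 4.2 or Corollary 4.5]{bauschke2011convex}, where cocoercivity of $I - T$ with constant $\tfrac12$ — equivalently, firm nonexpansiveness of $T$ under a relabeling — characterizes nonexpansiveness, but the direct computation is shorter and self-contained.) No completeness or separability of $\cH$ is needed, and the statement is purely pointwise, so no limiting argument enters.
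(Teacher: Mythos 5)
Your proposal is correct, and the one-line expansion
\[
\|Tx-Ty\|^2=\|(x-y)-(Sx-Sy)\|^2=\|x-y\|^2-2\langle x-y,Sx-Sy\rangle+\|Sx-Sy\|^2
\]
immediately gives both implications at once, since the chain of equivalences is valid for each fixed pair $(x,y)$. The paper itself does not write this computation out: it simply cites \cite[Proposition~4.33]{bauschke2011convex} for the ``if'' direction and remarks that the ``only if'' direction follows by reversing that proof, and the cited textbook argument is precisely the polarization identity you use; so your proof is essentially the same argument, merely made self-contained, which is arguably preferable since it exhibits the equivalence symmetrically rather than splitting it into a cited direction and a ``reverse the proof'' remark.
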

\begin{proof}
See textbook \cite[Proposition 4.33]{bauschke2011convex} for the proof of the ``if'' part, and the ``only if'' part, though missing there, follows by just reversing the proof.
\hfill\end{proof}

The lemma below develops an an upper bound for the expected distance between $x^{k+1}$ and any $x^*\in \Fix T$.
\begin{lemma}\label{lemma:fund}
Let $(x^k)_{k\geq 0}$ be the sequence generated by Algorithm \ref{alg:asyn_core}. Then for any $x^*\in \Fix T$ and $\gamma>0$ (to be optimized later), we have 
\begin{align}\label{eqn:fund_inquality0}
\begin{aligned}
\textstyle\mathbb{E}\big(\|x^{k+1} - x^* \|^2 \,\big|\, \cX^k\big)  \leq & \textstyle\|x^{k} - x^* \|^2  +\frac{\gamma}{m}\sum_{d\in J(k)}\|x^d-x^{d+1}\|^2\\
&\textstyle+ \frac{1}{m}\left(\frac{{|J(k)|}}{\gamma}+\frac{1}{mp_{\min}}-\frac{1}{\eta_k}\right)\|x^k-\bar x^{k+1}\|^2.
\end{aligned}
\end{align}
\end{lemma}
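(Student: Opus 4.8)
The plan is to work from the update rule, expand the squared norm, condition on $\cX^k$ to handle the random coordinate selection, and then control the cross term using the cocoercivity of $S$ (Lemma~\ref{lemma:a-avg}) together with a Young's inequality to absorb the delay terms $x^d - x^{d+1}$. I would first note that by the coordinate update \eqref{eqn:simple_asyn_update}, and the full (never-computed) update $\bar{x}^{k+1}$ from \eqref{eqn:def_bar_x}, we have $\|x^{k+1} - x^*\|^2 = \|x^k - x^*\|^2 - 2\dotp{x^k - x^*, x^k - x^{k+1}} + \|x^k - x^{k+1}\|^2$. Taking $\EE(\cdot\,|\,\cX^k)$ and using \eqref{eq:independence} together with the normalization $mp_{i_k}$, the conditional expectation of $x^k - x^{k+1} = \frac{\eta_k}{mp_{i_k}} S_{i_k}\hat{x}^k$ is $\frac{\eta_k}{m} S\hat{x}^k = \frac{1}{m}(x^k - \bar{x}^{k+1})$, so the cross term becomes $-\frac{2}{m}\dotp{x^k - x^*, x^k - \bar{x}^{k+1}}$. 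Likewise $\EE(\|x^k - x^{k+1}\|^2\,|\,\cX^k) = \sum_i p_i \frac{\eta_k^2}{m^2 p_i^2}\|S_i\hat{x}^k\|^2 = \frac{\eta_k^2}{m^2}\sum_i \frac{1}{p_i}\|S_i\hat{x}^k\|^2 \le \frac{\eta_k^2}{m^2 p_{\min}}\|S\hat{x}^k\|^2 = \frac{1}{m^2 p_{\min}}\|x^k - \bar{x}^{k+1}\|^2$.

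Next I would rewrite the cross term by splitting $x^k - x^*$ as $(\hat{x}^k - x^*) + (x^k - \hat{x}^k)$, so that
$$-\tfrac{2}{m}\dotp{x^k - x^*, x^k - \bar{x}^{k+1}} = -\tfrac{2}{m}\dotp{\hat{x}^k - x^*, x^k - \bar{x}^{k+1}} - \tfrac{2}{m}\dotp{x^k - \hat{x}^k, x^k - \bar{x}^{k+1}}.$$
For the first piece, since $x^k - \bar{x}^{k+1} = \eta_k S\hat{x}^k = \eta_k(S\hat{x}^k - Sx^*)$ (using $Sx^* = 0$), Lemma~\ref{lemma:a-avg} gives $\dotp{\hat{x}^k - x^*, S\hat{x}^k} \ge \tfrac12\|S\hat{x}^k\|^2 = \tfrac{1}{2\eta_k^2}\|x^k - \bar{x}^{k+1}\|^2$, hence the first piece is bounded above by $-\tfrac{1}{m\eta_k}\|x^k - \bar{x}^{k+1}\|^2$. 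For the second piece, I use $x^k - \hat{x}^k = -\sum_{d\in J(k)}(x^d - x^{d+1})$ from \eqref{eqn:inconsist}, and apply Young's inequality in the form $-2\dotp{a_d, b} \le \gamma\|a_d\|^2 + \tfrac{1}{\gamma}\|b\|^2$ to each of the $|J(k)|$ terms; summing gives an upper bound $\tfrac{1}{m}\sum_{d\in J(k)}\bigl(\gamma\|x^d - x^{d+1}\|^2 + \tfrac{1}{\gamma}\|x^k - \bar{x}^{k+1}\|^2\bigr) = \tfrac{\gamma}{m}\sum_{d\in J(k)}\|x^d-x^{d+1}\|^2 + \tfrac{|J(k)|}{m\gamma}\|x^k - \bar{x}^{k+1}\|^2$.

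Collecting all contributions — the $\|x^k - x^*\|^2$ term, the $-\tfrac{1}{m\eta_k}\|x^k - \bar{x}^{k+1}\|^2$ from cocoercivity, the $\tfrac{1}{m^2 p_{\min}}\|x^k - \bar{x}^{k+1}\|^2$ from the second moment, and the two Young's terms — yields exactly \eqref{eqn:fund_inquality0}, with the coefficient of $\|x^k - \bar{x}^{k+1}\|^2$ being $\tfrac{1}{m}\bigl(\tfrac{|J(k)|}{\gamma} + \tfrac{1}{mp_{\min}} - \tfrac{1}{\eta_k}\bigr)$ after factoring out $\tfrac1m$. The main subtlety, and the step I would be most careful with, is the bookkeeping that $\hat{x}^k$ is $\cX^k$-measurable (it is read before the random $i_k$ is drawn), so that the cocoercivity inequality and the expansion of the cross term can legitimately be applied after conditioning; one must also take care that the identity $x^k - \bar{x}^{k+1} = \eta_k S\hat{x}^k$ is deterministic given $\cX^k$, which is what makes the conditional expectation computations clean. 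Everything else is the routine expansion plus one application each of cocoercivity and Young's inequality.
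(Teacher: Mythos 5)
Your proposal is correct and follows essentially the same argument as the paper's proof: expand the squared distance, take the conditional expectation using \eqref{eq:independence}, bound the second-moment term by $\frac{1}{m^2p_{\min}}\|x^k-\bar x^{k+1}\|^2$, and split the cross term via \eqref{eqn:inconsist} so that cocoercivity of $S$ (with $Sx^*=0$) handles the $\hat x^k$ part and Young's inequality with parameter $\gamma$ absorbs the delay terms. The only cosmetic difference is that you phrase the expansion around $x^k-x^{k+1}$ while the paper works directly with $S_{i_k}\hat x^k$ (and you cite \eqref{eqn:simple_asyn_update} where \eqref{eqn:asyn_update} is meant), but the computations coincide.
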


\begin{proof}
Recall $\Prob(i_k=i)=p_i$. Then we have
\begin{equation}\label{eqn:equality_inconsistent}
\begin{array}{rcl}
&&\mathbb{E}\left(\|x^{k+1} - x^*\|^2\,|\,\cX^k\right)\\
&\overset{\eqref{eqn:asyn_update}}=&\mathbb{E}\left(\|x^{k}  - \textstyle\frac{\eta_k}{mp_{i_k}}S_{i_k} \hat{x}^{k}- x^* \|^2\,|\,\cX^k\right)\cr
&=& \|x^k - x^*\|^2+\mathbb{E}\left(\textstyle\frac{2\eta_k}{mp_{i_k}} \left\langle S_{i_k} \hat{x}^{k}, x^* - x^k \right\rangle + \textstyle\frac{\eta_k^2}{m^2p_{i_k}^2} \|S_{i_k}\hat{x}^{k}\|^2\,\big|\,\cX^k\right)\cr
&\overset{\eqref{eq:independence}}=&\|x^k - x^*\|^2+\textstyle\frac{2\eta_k}{m} \sum_{i=1}^m\left\langle S_i \hat{x}^{k}, x^* - x^k \right\rangle + \frac{\eta_k^2}{m^2}\sum_{i=1}^m\frac{1}{p_i}\|S_i\hat{x}^{k}\|^2\cr
&=&\|x^k - x^*\|^2+\textstyle\frac{2\eta_k}{m} \left\langle S \hat{x}^{k}, x^* - x^k \right\rangle +\frac{\eta_k^2}{m^2} \sum_{i=1}^m\frac{1}{p_i} \|S_i\hat{x}^{k}\|^2.
\end{array}
\end{equation}

Note that
\begin{equation}\label{term2}\sum_{i=1}^m\frac{1}{p_i} \|S_i\hat{x}^{k}\|^2\le\frac{1}{p_{\min}} \sum_{i=1}^m\|S_i\hat{x}^{k}\|^2
=\frac{1}{p_{\min}} \|S\hat{x}^{k}\|^2\overset{\eqref{eqn:def_bar_x}}{=}\frac{1}{\eta_k^2p_{\min}}\|x^k-\bar{x}^{k+1}\|^2,\end{equation}
and
\begin{align}\label{term1}
&\langle S \hat{x}^{k}, x^* - x^k \rangle\cr
\overset{\eqref{eqn:inconsist}}=&\textstyle\langle S \hat{x}^{k},x^* - \hat{x}^k + \sum_{d\in J(k)} (x^{d} - x^{d+1})\rangle\cr
\overset{\eqref{eqn:def_bar_x}}=&\textstyle\langle S \hat{x}^{k}, x^* - \hat{x}^k\rangle + \frac{1}{\eta_k}\sum_{d\in J(k)}\langle x^k-\bar{x}^{k+1}, x^{d} - x^{d+1}\rangle\cr
\le&\textstyle \langle S \hat{x}^{k}-Sx^*, x^* - \hat{x}^k\rangle+\frac{1}{2\eta_k}\sum_{d\in J(k)}\big(\frac{1}{\gamma}\|x^k-\bar{x}^{k+1}\|^2+ \gamma\|x^{d} - x^{d+1}\|^2\big)\\
\le&\textstyle -\frac{1}{2}\|S \hat{x}^{k}\|^2+\frac{1}{2\eta_k}\sum_{d\in J(k)}(\frac{1}{\gamma}\|x^k-\bar{x}^{k+1}\|^2+ \gamma\|x^{d} - x^{d+1}\|^2)\cr
\overset{\eqref{eqn:def_bar_x}}=&\textstyle-\frac{1}{2\eta_k^2}\|x^k-\bar{x}^{k+1}\|^2+\frac{|J(k)|}{2\gamma\eta_k}\|x^k-\bar{x}^{k+1}\|^2+\frac{\gamma}{2\eta_k}\sum_{d\in J(k)}\|x^{d} - x^{d+1}\|^2,\nonumber
\end{align}
where the first inequality follows from the Young's inequality. Plugging~\eqref{term2} and~\eqref{term1} into~\eqref{eqn:equality_inconsistent} gives the desired result.\hfill\end{proof}

\cut{({\color{red}$\|x^k-\bar x^{k+1}\|^2$ can be replaced by the maximal component: $\max_i(x_i^k-\bar x_i^{k+1})^2$})}
We  need the following lemma on \emph{nonnegative almost supermartingales} \cite{robbins1985convergence}.
\begin{lemma}[{\cite[Theorem 1]{robbins1985convergence}}]\label{thm:sup}
Let $\mathscr{F} = (\cF^k)_{k\geq0}$ be a sequence of sub-sigma algebras of $\cF$ such that $\forall k \ge 0$, $\cF^k \subset \cF^{k+1}$. Define $\ell_+ (\mathscr{F})$ as the set of sequences of $[0, +\infty)$-valued random variables $(\xi_k)_{k\geq 0}$, where $\xi_k$ is $\cF^k$ measurable, and  $\ell_+^1 (\mathscr{F}) := \left\{(\xi_k)_{k\geq 0} \in \ell_+ (\mathscr{F})| \sum_{k} \xi_k < + \infty~ \text{a.s.} \right\}$. Let $(\alpha_k)_{k\geq 0}, (v_k)_{k\geq 0} \in \ell_{+}(\mathscr{F})$, and $(\eta_k)_{k\geq 0}, (\xi_k)_{k\geq 0} \in \ell_{+}^1(\mathscr{F})$ be such that
\begin{equation*}
\mathbb{E} (\alpha_{k+1} | \cF^k) + v_k \leq (1 + \xi_k) \alpha_k + \eta_k.
\end{equation*}
Then $(v_k)_{k\ge0} \in \ell_{+}^1 (\mathscr{F})$ and $\alpha_k$ converges to a $[0, +\infty)$-valued random variable a.s..
\end{lemma}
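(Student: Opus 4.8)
The plan is to prove this (it is the classical Robbins--Siegmund nonnegative almost-supermartingale lemma) by first eliminating the multiplicative error terms $\xi_k$ and then localizing the supermartingale convergence theorem. \emph{Step 1 (remove $\xi_k$).} Set $\beta_k:=\prod_{j=0}^{k-1}(1+\xi_j)^{-1}$ with $\beta_0:=1$. Since $\xi_j\ge 0$ and $\sum_j\xi_j<\infty$ a.s., we have $\sum_j\log(1+\xi_j)\le\sum_j\xi_j<\infty$, so $(\beta_k)$ is nonincreasing and converges a.s.\ to a limit $\beta_\infty\in(0,1]$; moreover each $\beta_k$ is $\cF^{k-1}$- hence $\cF^k$-measurable. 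Multiplying the hypothesis by the $\cF^k$-measurable factor $\beta_{k+1}$, using $\beta_{k+1}(1+\xi_k)=\beta_k$, and pulling $\beta_{k+1}$ inside the conditional expectation, I obtain, with $\alpha_k':=\beta_k\alpha_k$, $v_k':=\beta_{k+1}v_k$, $\eta_k':=\beta_{k+1}\eta_k$,
$$\mathbb{E}(\alpha_{k+1}'\mid\cF^k)+v_k'\le\alpha_k'+\eta_k',\qquad \sum_k\eta_k'\le\sum_k\eta_k<\infty\ \text{a.s.}$$
Because $\beta_\infty\in(0,1]$, a.s.\ convergence of $(\alpha_k')$ and a.s.\ summability of $(v_k')$ are equivalent to the corresponding statements for $(\alpha_k)$ and $(v_k)$, so it suffices to treat the case $\xi_k\equiv0$, which I henceforth assume (dropping primes).

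\emph{Step 2 (a localized supermartingale).} Put $X_k:=\alpha_k+\sum_{j=0}^{k-1}(v_j-\eta_j)$, which is integrable and $\cF^k$-measurable; the hypothesis gives $\mathbb{E}(X_{k+1}\mid\cF^k)\le X_k-v_k\le X_k$, so $(X_k)$ is a supermartingale. It is bounded below only by $-\sum_{j\ge0}\eta_j$, an a.s.-finite but random quantity, so I localize: for $M>0$ let $\sigma_M:=\inf\{k\ge0:\sum_{j=0}^{k}\eta_j>M\}$, a stopping time because $\sum_{j=0}^k\eta_j$ is $\cF^k$-measurable. By optional stopping $(X_{k\wedge\sigma_M})_k$ is again a supermartingale, and since $\sum_{j=0}^{(k\wedge\sigma_M)-1}\eta_j\le M$ we have $X_{k\wedge\sigma_M}\ge\alpha_{k\wedge\sigma_M}-M\ge -M$ for every $k$. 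A supermartingale bounded below by a constant converges a.s., so $X_{k\wedge\sigma_M}$ has an a.s.\ limit; on the event $\{\sum_{j\ge0}\eta_j\le M\}$, where $\sigma_M=\infty$, this says $X_k$ itself converges. Letting $M\uparrow\infty$ and using $\sum_j\eta_j<\infty$ a.s.\ shows $(X_k)$ converges a.s.

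\emph{Step 3 (conclusion).} Since $\sum_{j=0}^{k-1}\eta_j$ converges a.s., so does $\alpha_k+\sum_{j=0}^{k-1}v_j=X_k+\sum_{j=0}^{k-1}\eta_j$; this nonnegative sequence is therefore a.s.\ bounded, and as $\sum_{j=0}^{k-1}v_j$ is nondecreasing and dominated by it, $\sum_{j\ge0}v_j<\infty$ a.s., i.e.\ $(v_j)\in\ell_+^1(\mathscr{F})$. Consequently $\alpha_k=\big(X_k+\sum_{j=0}^{k-1}\eta_j\big)-\sum_{j=0}^{k-1}v_j$ converges a.s., and its limit is nonnegative because $\alpha_k\ge0$ for all $k$. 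Undoing the Step 1 substitution ($\alpha_k=\alpha_k'/\beta_k$, $v_k=v_k'/\beta_{k+1}$, with $\beta_k\to\beta_\infty\in(0,1]$) transfers both conclusions to the original sequences.

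\emph{Main obstacle.} The delicate point is precisely that $X_k$ is bounded below only by the random quantity $-\sum_j\eta_j$, so the supermartingale convergence theorem cannot be applied to $(X_k)$ directly; the stopping-time localization — verifying that $\sigma_M$ is a stopping time (this uses the $\cF^k$-measurability of $\eta_k$) and that optional stopping preserves the supermartingale inequality — is the crux. Everything else is routine bookkeeping with nonnegative, monotone partial sums.
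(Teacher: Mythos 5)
The paper does not prove this lemma at all: it is quoted verbatim from Robbins and Siegmund and used as a black box, so there is no ``paper proof'' to compare against. Your argument is, in substance, the classical proof from that source: discount by $\beta_k=\prod_{j<k}(1+\xi_j)^{-1}$ to reduce to $\xi_k\equiv 0$, form the compensated process $X_k=\alpha_k+\sum_{j<k}(v_j-\eta_j)$, and restore a lower bound by stopping at $\sigma_M=\inf\{k:\sum_{j\le k}\eta_j>M\}$, then let $M\uparrow\infty$. All the measurability checks you flag (that $\beta_{k+1}$ is $\cF^k$-measurable, that $\sigma_M$ is a stopping time because the partial sums of $\eta$ are adapted and monotone, that stopping preserves the supermartingale inequality) are correct, and Step 3's monotonicity bookkeeping is fine.

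One technical caveat: the lemma as stated assumes only that the random variables are nonnegative, not integrable, so your parenthetical claim that $X_k$ ``is integrable'' is not justified and cannot be deduced from the hypotheses (taking expectations in the recursion may give $+\infty$). The standard repair is cheap and worth recording: after stopping, $Y_k:=X_{k\wedge\sigma_M}+M$ is a nonnegative supermartingale in the extended (conditional-expectation-of-nonnegative-variables) sense; dividing by the $\cF^0$-measurable factor $1+Y_0$ preserves the supermartingale inequality and yields $\mathbb{E}\bigl[Y_k/(1+Y_0)\bigr]\le 1$ by induction, so Doob's convergence theorem applies to the normalized process, and multiplying back by the a.s.\ finite factor $1+Y_0$ gives a.s.\ convergence of $X_{k\wedge\sigma_M}$. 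With that one-line fix (or by simply invoking convergence of generalized nonnegative supermartingales), your proof is complete and coincides with the argument in the cited reference.
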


Let $\cH^{\tau+1}=\prod_{i=0}^{\tau}\cH$ be a product space and $\langle\cdot\, |\,\cdot \rangle$ be the induced  inner product:
$$\left\langle (z^0,\ldots,z^{\tau})\,|\,(y^0,\ldots,y^{\tau})\right\rangle=\sum_{i=0}^{\tau}\langle z^i,y^i\rangle,\quad\forall (z^0,\ldots,z^{\tau}), (y^0,\ldots,y^{\tau})\in\cH^{\tau+1}.$$
Let $M'$ be a symmetric $(\tau+1)\times(\tau+1)$ tri-diagonal matrix with its main diagonal as $\sqrt{p_{\min}}[\frac{1}{\sqrt{p_{\min}}}+\tau,2\tau-1,2\tau-3,\ldots,1]$ and first off-diagonal as $-\sqrt{p_{\min}}[\tau,\tau-1,\ldots,1]$,
and let $M=M'\otimes I_\cH$. Here \cut{$I_\cH$ is the identity operator on $\cH$ and }$\otimes$ represents the Kronecker product. For a given $(y^0,\cdots,y^\tau)\in\cH^{\tau+1}$, $(z^0,\cdots,z^\tau)=M(y^0,\cdots,y^\tau)$ is given by:
\begin{align*}
&z^0=\textstyle y^0+\sqrt{p_{\min}}(y^0-y^1),\\
&z^i = \textstyle\sqrt{p_{\min}}\left((i-\tau-1)y^{i-1}+(2\tau-2i+1)y^i+(i-\tau)y^{i+1}\right),\text{ if } 1\le i\le \tau-1,\\
&z^{\tau}=\textstyle\sqrt{p_{\min}}(y^{\tau}-y^{\tau-1}).
\end{align*}
Then $M$ is a self-adjoint and positive definite linear operator since $M'$ is symmetric and positive definite, and we define $\dotp{\cdot\, |\, \cdot}_M=\dotp{\cdot\, |\, M\cdot}$ as the $M$-weighted inner product and $\|\cdot\|_M$ the induced norm. Let
\begin{equation*}
\vx^k=(x^k,x^{k-1},\ldots,x^{k-\tau})\in \cH^{\tau+1},~k\ge 0,\,\mbox{and}~ \vx^* =(x^*,x^*,\ldots,x^*)\in\vX^*\subseteq\cH^{\tau+1},
\end{equation*}
where we set $x^{k}=x^{0}$ for $k<0$. With
\begin{equation}\label{eqn:xi}
\textstyle \xi_k(\vx^*) := \|\vx^k-\vx^*\|_M^2=\|x^{k} - x^*\|^2 + \sqrt{p_{\min}}\sum_{i=k-\tau}^{k-1} \left(i-(k-\tau)+1\right) \| x^{i} - x^{i+1}\|^2,
\end{equation}
we have the following fundamental inequality:

\begin{theorem}[Fundamental inequality]\label{thm:fund_inquality}
Let $(x^k)_{k\geq 0}$ be the sequence generated by ARock. Then for any $\vx^*\in\vX^*$, it holds that 
\begin{equation}\label{eqn:fund_inquality}
\begin{aligned}
\textstyle\mathbb{E}\left(\xi_{k+1}(\vx^*) \,\big|\, \cX^k\right) + \frac{1}{m} \left(\frac{1}{ \eta_k} - \frac{2\tau}{m\sqrt{p_{\min}}} - \frac{1}{mp_{\min}} \right) \|\bar{x}^{k+1} - x^k \|^2
\leq  \xi_k(\vx^*) .
\end{aligned}
\end{equation}
\end{theorem}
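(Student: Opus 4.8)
The plan is to combine Lemma~\ref{lemma:fund} with the explicit formula~\eqref{eqn:xi} for $\xi_k(\vx^*)$, treating the weighted sum of squared successive differences carried inside $\xi_k$ as a reservoir that exactly absorbs the delay terms $\sum_{d\in J(k)}\|x^d-x^{d+1}\|^2$ produced by Lemma~\ref{lemma:fund}. The only genuinely new summand in $\xi_{k+1}(\vx^*)$ (beyond $\|x^{k+1}-x^*\|^2$) is the $i=k$ term $\tau\sqrt{p_{\min}}\|x^k-x^{k+1}\|^2$; every other summand $\|x^i-x^{i+1}\|^2$ with $i<k$ is $\cX^k$-measurable. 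Hence I would first take the conditional expectation term by term,
\begin{align*}
\mathbb{E}\big(\xi_{k+1}(\vx^*)\,\big|\,\cX^k\big) = \mathbb{E}\big(\|x^{k+1}-x^*\|^2\,\big|\,\cX^k\big) + \tau\sqrt{p_{\min}}\,\mathbb{E}\big(\|x^k-x^{k+1}\|^2\,\big|\,\cX^k\big) + \sqrt{p_{\min}}\!\!\sum_{i=k+1-\tau}^{k-1}\!\!(i-k+\tau)\,\|x^i-x^{i+1}\|^2 .
\end{align*}

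Second, I would bound the middle term by the same computation as in~\eqref{term2}: using the update~\eqref{eqn:asyn_update}, the independence~\eqref{eq:independence}, and $\sum_i\frac1{p_i}\|S_i\hat x^k\|^2\le\frac1{p_{\min}}\|S\hat x^k\|^2$, together with $\bar x^{k+1}-x^k=-\eta_kS\hat x^k$ from~\eqref{eqn:def_bar_x}, one gets $\mathbb{E}(\|x^k-x^{k+1}\|^2\mid\cX^k)=\frac{\eta_k^2}{m^2}\sum_i\frac1{p_i}\|S_i\hat x^k\|^2\le\frac1{m^2p_{\min}}\|x^k-\bar x^{k+1}\|^2$.

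Third, I would substitute Lemma~\ref{lemma:fund} for $\mathbb{E}(\|x^{k+1}-x^*\|^2\mid\cX^k)$ with the specific choice $\gamma=m\sqrt{p_{\min}}$, and then reconcile the index sets. From~\eqref{eqn:xi} one has $\xi_k(\vx^*)=\|x^k-x^*\|^2+\sqrt{p_{\min}}\sum_{i=k-\tau}^{k-1}(i-k+\tau+1)\|x^i-x^{i+1}\|^2$, so the ``old'' sum above equals the sum in $\xi_k$ minus $\sqrt{p_{\min}}\sum_{i=k-\tau}^{k-1}\|x^i-x^{i+1}\|^2$; since $J(k)\subseteq\{k-\tau,\dots,k-1\}$ and $\gamma/m=\sqrt{p_{\min}}$, the delay term $\frac{\gamma}{m}\sum_{d\in J(k)}\|x^d-x^{d+1}\|^2$ is dominated by that missing reservoir, and the two cancel, leaving exactly $\xi_k(\vx^*)$. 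It then remains to collect the coefficient of $\|x^k-\bar x^{k+1}\|^2$: Lemma~\ref{lemma:fund} contributes $\frac1m\big(\frac{|J(k)|}{\gamma}+\frac1{mp_{\min}}-\frac1{\eta_k}\big)$, the second step adds $\tau\sqrt{p_{\min}}\cdot\frac1{m^2p_{\min}}=\frac{\tau}{m^2\sqrt{p_{\min}}}$, and using $|J(k)|\le\tau$ and $\gamma=m\sqrt{p_{\min}}$ the sum becomes $\frac{2\tau}{m^2\sqrt{p_{\min}}}+\frac1{m^2p_{\min}}-\frac1{m\eta_k}=-\frac1m\big(\frac1{\eta_k}-\frac{2\tau}{m\sqrt{p_{\min}}}-\frac1{mp_{\min}}\big)$, which is precisely~\eqref{eqn:fund_inquality} after moving this term to the left-hand side.

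The main obstacle is purely the bookkeeping of the three weighted sums of $\|x^i-x^{i+1}\|^2$: getting the coefficient shift $i-k+\tau+1\mapsto i-k+\tau$ right when passing from $\xi_k$ to $\xi_{k+1}$, and recognizing that $\gamma=m\sqrt{p_{\min}}$ is exactly the value for which an \emph{arbitrary} delay index set $J(k)$ (of size at most $\tau$) can be absorbed by the reservoir. The self-adjoint positive-definite operator $M$ and the identity $\xi_k(\vx^*)=\|\vx^k-\vx^*\|_M^2$ are a convenient packaging of this reservoir (and will be reused for stochastic Fej\'er monotonicity), but are not needed for the present estimate.
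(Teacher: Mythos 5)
Your proposal is correct and follows essentially the same route as the paper's proof: apply Lemma~\ref{lemma:fund} with $\gamma=m\sqrt{p_{\min}}$, bound $\mathbb{E}\big(\|x^k-x^{k+1}\|^2\mid\cX^k\big)\le\frac{1}{m^2p_{\min}}\|x^k-\bar x^{k+1}\|^2$, and let the weighted sum inside $\xi_k(\vx^*)$ absorb the delay terms over $J(k)\subseteq\{k-\tau,\dots,k-1\}$. Your index and coefficient bookkeeping (the delay sum entering with coefficient $\sqrt{p_{\min}}=\gamma/m$, the weight shift from $i-k+\tau+1$ to $i-k+\tau$, and the combined coefficient $\frac{2\tau}{m\sqrt{p_{\min}}}+\frac{1}{mp_{\min}}$) matches the paper's argument exactly.
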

\begin{proof}Let $\gamma=m\sqrt{p_{\min}}$.  Since $J(k)\subset\{k-1,\cdots,k-\tau\}$, then \eqref{eqn:fund_inquality0} indicates
\begin{equation}\label{fund-ineq1}
\begin{aligned}
\textstyle\mathbb{E}\big(\|x^{k+1} - x^* \|^2 \,\big|\, \cX^k\big)  \leq & \textstyle\|x^{k} - x^* \|^2  +\frac{1}{\sqrt{p_{\min}}}\sum_{i=k-\tau}^{k-1}\|x^i-x^{i+1}\|^2\\
&\textstyle+ \frac{1}{m}\left(\frac{{\tau}}{m\sqrt{p_{\min}}}+\frac{1}{mp_{\min}}-\frac{1}{\eta_k}\right)\|x^k-\bar x^{k+1}\|^2.
\end{aligned}
\end{equation}
From \eqref{eqn:asyn_update} and \eqref{eqn:def_bar_x}, it is easy to have $\EE(\|x^k-x^{k+1}\|^2|\cX^k)\le\frac{1}{m^2p_{\min}}\|x^k-\bar{x}^{k+1}\|^2$, which together with \eqref{fund-ineq1} implies \eqref{eqn:fund_inquality} by using the definition of $\xi_k(\vx^*)$.
\cut{
\begin{align*}
~&\mathbb{E} (\xi_{k+1}(\vx^*) | \cX^k)  \\
\overset{\eqref{eqn:xi}}= & \textstyle \mathbb{E} (\|x^{k+1} - x^*\|^2| \cX^k) + \gamma\sum_{i=k+1-\tau}^{k} \frac{i-(k-\tau)}{m} \mathbb{E} (\| x^{ i} - x^{i+1}\|^2 | \cX^k) \\
  \overset{\eqref{eqn:asyn_update}}= & \textstyle\mathbb{E} (\|x^{k+1} - x^*\|^2| \cX^k) + \frac{\gamma\tau}{m} \mathbb{E}(\frac{\eta_k^2}{m^2p_{i_k}^2}\| S_{i_k}\hat x^k\|^2|\cX^k) +  \gamma\sum_{i=k+1-\tau}^{k-1} \frac{i-(k-\tau)}{m} \| x^{i} - x^{i+1}\|^2\\
          \le & \textstyle\mathbb{E} (\|x^{k+1} - x^*\|^2| \cX^k) + \frac{\gamma\tau}{m^3p_{\min}} \| x^{k } - \bar{x}^{k+1}\|^2 +  \gamma\sum_{i=k+1-\tau}^{k-1} \frac{i-(k-\tau)}{m} \| x^{i} - x^{i+1}\|^2\\
  \overset{\eqref{eqn:fund_inquality0}}\leq &\textstyle\|x^k - x^*\|^2 + \frac{1}{m} \left(\frac{|J(k)|}{\gamma} + \frac{\gamma\tau}{m^2p_{\min}} + \frac{1}{mp_{\min}} - \frac{1}{\eta_k}\right) \|x^k - \bar{x}^{k+1}\|^2   \\
&\textstyle+\frac{\gamma}{m}\sum_{d\in J(k)}\|{x}^{d} - {x}^{d+1}\|^2 +  \gamma\sum_{i=k+1-\tau}^{k-1} \frac{i-(k-\tau)}{m} \| x^{i} - x^{i+1}\|^2\\
 \leq &\textstyle\|x^k - x^*\|^2 + \frac{1}{m} \left(\frac{\tau}{\gamma} + \frac{\gamma\tau}{m^2p_{\min}} + \frac{1}{ mp_{\min}} - \frac{1}{ \eta_k}\right) \|x^k - \bar{x}^{k+1}\|^2   \\
&\textstyle+\frac{\gamma}{m}\sum_{i=k-\tau}^{k - 1}\|{x}^{i} - {x}^{i+1}\|^2 +  \gamma\sum_{i=k+1-\tau}^{k-1} \frac{i-(k-\tau)}{m} \| x^{i} - x^{i+1}\|^2\\
 \overset{\eqref{eqn:xi}}= &\textstyle\xi_k(\vx^*)  + \frac{1}{m} \left(\frac{2\tau}{m\sqrt{p_{\min}}} + \frac{1}{mp_{\min}} - \frac{1}{ \eta_k}\right) \|x^k - \bar{x}^{k+1}\|^2.
\end{align*}
The first inequality follows from the computation of the expectation and~\eqref{term2}, the second inequality holds because $J(k)\subset\{k-1,k-2,\cdots,k-\tau\}$, and the last equality uses $\gamma=m\sqrt{p_{\min}}$, which minimizes $\frac{\tau}{\gamma} + \frac{\gamma\tau}{m^2p_{\min}}$ over $\gamma>0$. Hence, \eqref{eqn:fund_inquality}\cut{the desired inequality} holds.}
\hfill\end{proof}
\begin{remark}[Stochastic Fej\'er monotonicity]\label{rmk:fejer}
From \eqref{eqn:fund_inquality}, if $0<\eta_k \leq \frac{mp_{\min}}{2\tau \sqrt{p_{\min}} + 1}$, then we have $\mathbb{E} (\|\vx^{k+1} - {\vx}^* \|^2_M | \cX^k)  \leq  \|\vx^{k} - {\vx}^* \|^2_M,\,\forall \vx^*\in \vX^*$.\cut{, namely, $(\vx^k)_{k\ge0}$ is stochastic Fej\'er monotone.}
\end{remark}
\begin{remark}\label{rm:stepsize}  Let us check our step size bound  $\frac{mp_{\min}}{2\tau\sqrt{p_{\min}}+1}$.  Consider the uniform case: $p_{\min}\equiv p_i\equiv\frac{1}{m}$. Then, the bound simplifies to $\frac{1}{1+2\tau/\sqrt{m}}$. If the max delay is no more than the square root of coordinates, i.e., $\tau = O(\sqrt{m})$, then the bound is  $O(1)$.  In general,  $\tau$ depends on several factors such as problem structure, system architecture, load balance, etc. If all updates and  agents are identical, then $\tau$ is  proportional to $p$, the number of agents. Hence, ARock takes an $O(1)$ step size for solving a problem with $m$ coordinates by $p=\sqrt{m}$ agents under balanced loads.
\end{remark}

The next lemma is a direct consequence of the invertibility of the metric  $M$. 
\begin{lemma}\label{lem:equiv-weak}
A sequence $(\vz^k)_{k\ge0}\subset\cH^{\tau+1}$ (weakly) converges to $\vz\in\cH^{\tau+1}$ under the metric $\dotp{\cdot\,|\,\cdot}$ if and only if it does so under the metric $\dotp{\cdot\,|\,\cdot}_M$.
\end{lemma}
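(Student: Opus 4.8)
The plan is to use that $M$ is a bounded, self-adjoint, positive-definite operator on $\cH^{\tau+1}$ possessing a bounded inverse, so that it is a topological isomorphism for both the strong and the weak topologies. First I would record the quantitative version of positive definiteness: since $M = M'\otimes I_\cH$ with $M'$ a symmetric positive-definite $(\tau+1)\times(\tau+1)$ matrix, its eigenvalues satisfy $0<\lambda_{\min}(M')\le\lambda_{\max}(M')<\infty$, and consequently $\lambda_{\min}(M')\,\|\vy\|^2 \le \|\vy\|_M^2 \le \lambda_{\max}(M')\,\|\vy\|^2$ for every $\vy\in\cH^{\tau+1}$. In particular $M^{-1}=(M')^{-1}\otimes I_\cH$ is a well-defined bounded self-adjoint operator. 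The two-sided norm bound already proves the equivalence for strong convergence, so it remains to treat weak convergence.

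For the weak case I would argue both implications by a change of test vector. Suppose $\vz^k\rightharpoonup\vz$ under $\dotp{\cdot\,|\,\cdot}$. Fix $\vw\in\cH^{\tau+1}$; then $M\vw$ is a fixed element of $\cH^{\tau+1}$, so $\dotp{\vz^k\,|\,\vw}_M=\dotp{\vz^k\,|\,M\vw}\to\dotp{\vz\,|\,M\vw}=\dotp{\vz\,|\,\vw}_M$, whence $\vz^k\rightharpoonup\vz$ under $\dotp{\cdot\,|\,\cdot}_M$. Conversely, suppose $\vz^k\rightharpoonup\vz$ under $\dotp{\cdot\,|\,\cdot}_M$. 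Given an arbitrary $\vu\in\cH^{\tau+1}$, apply the hypothesis to the test vector $\vw:=M^{-1}\vu$ and use the self-adjointness of $M$: $\dotp{\vz^k\,|\,\vu}=\dotp{\vz^k\,|\,M M^{-1}\vu}=\dotp{\vz^k\,|\,\vw}_M\to\dotp{\vz\,|\,\vw}_M=\dotp{\vz\,|\,\vu}$, so $\vz^k\rightharpoonup\vz$ under $\dotp{\cdot\,|\,\cdot}$.

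I do not anticipate any real obstacle here: the content of the lemma is exactly the invertibility of $M$, already noted in the construction of the metric. The only line that deserves explicit mention is why $M$ has a bounded inverse, and this is immediate because $M'$ is a finite positive-definite matrix, so $(M')^{-1}$ exists, and tensoring with $I_\cH$ preserves boundedness and self-adjointness. With the two substitutions above the equivalence follows at once.
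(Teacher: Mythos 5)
Your proof is correct and is exactly the argument the paper has in mind: the paper simply asserts the lemma as a direct consequence of the invertibility (and self-adjointness, positive definiteness) of $M$, and your two-sided eigenvalue bound for the strong case plus the test-vector substitutions $\vw\mapsto M\vw$ and $\vu\mapsto M^{-1}\vu$ for the weak case fill in precisely that reasoning.
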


In light of Lemma~\ref{lem:equiv-weak}, the metric of the inner product for weak convergence in the next lemma is not specified. The lemma and its proof are adapted from~\cite{combettes2014stochastic}. 

\begin{lemma}\label{lemma:convergence}
Let $(x^k)_{k\geq0}\subset \cH$ be the sequence generated by ARock with $\eta_k \in [\eta_{\min}, \frac{cmp_{\min}}{2\tau \sqrt{p_{\min}}+1}]$ for any $\eta_{\min}>0$ and $0<c<1$. \cut{$\vx^k$ and  ${\vx}^*$ are defined in~\eqref{eqn:def_x_k},} Then we have:
\begin{enumerate}
\item[\emph{(i)}] $\sum_{k=0}^{\infty}\|x^k - \bar{x}^{k+1}\|^2 < \infty$ a.s..
\item[\emph{(ii)}] $x^k-x^{k+1}\rightarrow 0$ a.s. and $\hat x^k-x^{k+1}\rightarrow 0$ a.s..
\item[\emph{(iii)}] The sequence $(\vx^k)_{k\geq 0}\subset \cH^{\tau+1}$ is bounded a.s..
\item[\emph{(iv)}] There exists $\tilde{\Omega} \in \cF$ such that $P(\tilde{\Omega}) = 1$ and, for every $\omega \in \tilde{\Omega}$ and every ${\vx}^* \in \vX^*$, $(\|\vx^k(\omega) - {\vx}^*\|_M)_{k\geq 0}$ converges.
\item[\emph{(v)}] Let $\mathscr{Z}(\vx^k)$ be the set of weakly convergent cluster points of $(\vx^k)_{k\geq 0}$. Then, $\mathscr{Z}(\vx^k) \subseteq \vX^*$ a.s..
\end{enumerate}
\end{lemma}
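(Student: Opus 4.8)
The plan is to feed the fundamental inequality \eqref{eqn:fund_inquality} into the Robbins--Siegmund almost-supermartingale lemma (Lemma~\ref{thm:sup}) and then read off the five conclusions in the order (i)+(iii), (ii), (iv), (v). First fix any $\vx^*\in\vX^*$ (nonempty, since $T$ has a fixed point). Because $\eta_{\min}\le\eta_k\le\frac{cmp_{\min}}{2\tau\sqrt{p_{\min}}+1}$ with $c<1$, a one-line estimate shows the coefficient $\frac1m\big(\frac1{\eta_k}-\frac{2\tau}{m\sqrt{p_{\min}}}-\frac1{mp_{\min}}\big)$ is bounded below by the constant $\delta:=\frac{(1/c-1)(2\tau\sqrt{p_{\min}}+1)}{m^2p_{\min}}>0$, uniformly in $k$. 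Thus \eqref{eqn:fund_inquality} reads $\EE\big(\xi_{k+1}(\vx^*)\mid\cX^k\big)+\delta\|\bar x^{k+1}-x^k\|^2\le\xi_k(\vx^*)$, which is the hypothesis of Lemma~\ref{thm:sup} with filtration $(\cX^k)_{k\ge0}$, $\alpha_k=\xi_k(\vx^*)\ge0$, $v_k=\delta\|\bar x^{k+1}-x^k\|^2$, and the remaining two sequences of that lemma set to $0$. The lemma then yields $\sum_k\|\bar x^{k+1}-x^k\|^2<\infty$ a.s., which is (i), and also that $\xi_k(\vx^*)=\|\vx^k-\vx^*\|_M^2$ converges a.s.; since $M$ is positive definite, $\lambda_{\min}(M')\|\vx^k-\vx^*\|^2\le\|\vx^k-\vx^*\|_M^2$, so $(\vx^k)$ is a.s.\ bounded, giving (iii).

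For (ii) I would combine \eqref{eqn:asyn_update} and \eqref{eqn:def_bar_x} to get $\|x^{k+1}-x^k\|=\frac{\eta_k}{mp_{i_k}}\|S_{i_k}\hat x^k\|\le\frac1{mp_{\min}}\|S\hat x^k\|=\frac1{mp_{\min}\eta_k}\|x^k-\bar x^{k+1}\|\le\frac1{\eta_{\min}mp_{\min}}\|x^k-\bar x^{k+1}\|$, so $x^k-x^{k+1}\to0$ a.s.\ by (i); and \eqref{eqn:inconsist} with $J(k)\subseteq\{k-1,\dots,k-\tau\}$ gives $\|\hat x^k-x^k\|\le\sum_{d=k-\tau}^{k-1}\|x^d-x^{d+1}\|$, a fixed-length sum of vanishing terms, whence $\hat x^k-x^k\to0$ and therefore $\hat x^k-x^{k+1}\to0$ a.s.

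Part (iv) needs the usual care with exceptional sets, since the argument above fixed $\vx^*$ in advance. I would use that $\cH$, hence $\cH^{\tau+1}$, hence the subset $\vX^*$, is separable: choose a countable dense set $\{\vx_j^*\}_{j\ge1}\subseteq\vX^*$, let $\tilde\Omega_j\in\cF$ with $P(\tilde\Omega_j)=1$ be the event on which $\|\vx^k-\vx_j^*\|_M$ converges, let $\tilde\Omega_0$ be the probability-one event carrying (i)--(iii), and set $\tilde\Omega=\tilde\Omega_0\cap\bigcap_j\tilde\Omega_j$, still of probability one. For $\omega\in\tilde\Omega$ and arbitrary $\vx^*\in\vX^*$, pick $\vx_j^*$ with $\|\vx_j^*-\vx^*\|_M<\epsilon$; two applications of the triangle inequality give $\limsup_k\|\vx^k(\omega)-\vx^*\|_M-\liminf_k\|\vx^k(\omega)-\vx^*\|_M\le2\|\vx_j^*-\vx^*\|_M<2\epsilon$, and $\epsilon\downarrow0$ forces convergence. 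Finally for (v), work on $\tilde\Omega$ and let $\vx^{k_j}\rightharpoonup\vz=(z^0,\dots,z^\tau)$; by (ii), $\|x^{k_j}-x^{k_j-i}\|\le\sum_{l=k_j-i}^{k_j-1}\|x^l-x^{l+1}\|\to0$ for each $i\le\tau$, so $z^0=\dots=z^\tau=:z$, and $\hat x^{k_j}\rightharpoonup z$ because $\hat x^{k_j}-x^{k_j}\to0$ strongly; meanwhile $\|S\hat x^{k_j}\|=\eta_{k_j}^{-1}\|x^{k_j}-\bar x^{k_j+1}\|\le\eta_{\min}^{-1}\|x^{k_j}-\bar x^{k_j+1}\|\to0$ by (i). Since $T$ is nonexpansive, $S=I-T$ is demiclosed at $0$ (demiclosedness principle, cf.\ \cite[Cor.~4.18]{bauschke2011convex}), so $Sz=0$, i.e.\ $z\in\Fix T$ and $\vz\in\vX^*$; hence $\mathscr Z(\vx^k)\subseteq\vX^*$ a.s.

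The step I expect to be the main obstacle is the null-set bookkeeping in (iv): a single invocation of Robbins--Siegmund only controls one reference point $\vx^*$ on an exceptional set that depends on $\vx^*$, so passing to a common exceptional set valid for every $\vx^*\in\vX^*$ genuinely requires separability of $\vX^*$ together with the a.s.\ boundedness from (iii). The demiclosedness step in (v) is then routine, the one point worth noting being that the lower bound $\eta_k\ge\eta_{\min}>0$ is exactly what turns the $\ell^2$-summability in (i) into strong convergence $S\hat x^{k}\to0$ along subsequences.
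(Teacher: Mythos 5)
Your proposal is correct and follows essentially the same route as the paper: the fundamental inequality \eqref{eqn:fund_inquality} combined with the Robbins--Siegmund lemma (Lemma~\ref{thm:sup}) gives (i)--(iii), the common null set in (iv) comes from separability (the paper cites \cite[Proposition~2.3(iii)]{combettes2014stochastic}, whose proof is exactly your countable-dense-set triangle-inequality argument), and (v) rests on the demiclosedness principle applied along a weakly convergent subsequence. The only cosmetic differences are that you apply demiclosedness directly to $\hat x^{k_n}\rightharpoonup z$ with $S\hat x^{k_n}\to 0$, whereas the paper first transfers the residual to $x^{k_n}-Tx^{k_n}$ via nonexpansiveness, and that your bound $\|x^{k+1}-x^k\|\le\frac{1}{\eta_{\min}mp_{\min}}\|x^k-\bar x^{k+1}\|$ silently uses $\eta_k\le 1$ (true here since $\eta_k\le cmp_{\min}\le c<1$), while keeping the factor $\eta_k$ yields the paper's cleaner constant $\frac{1}{mp_{\min}}$ directly.
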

\begin{proof}
(i): Note that $\inf_k\left(\frac{1}{ \eta_k} - \frac{2\tau}{m\sqrt{p_{\min}}} - \frac{1}{mp_{\min}}\right)>0$. Also note that, in (\ref{eqn:fund_inquality}), $\|\bar{x}^{k+1} - x^k \|^2= \|\eta_k S\hat{x}^k\|^2$ is $\cX^k$-measurable. Hence, applying Lemma \ref{thm:sup} with $\xi_k=\eta_k=0$ and $\alpha_k=\xi_k(\vx^*),\,\forall k,$ to (\ref{eqn:fund_inquality}) gives this result directly.

(ii) From~(i), we have $x^k-\bar x^{k+1}\rightarrow 0$ a.s.. Since $\|x^k-x^{k+1}\|\leq \frac{1}{mp_{\min}}\|x^k-\bar x^{k+1}\|$, we have $x^k-x^{k+1}\rightarrow 0$ a.s.. Then from~\eqref{eqn:inconsist}, we have $\hat x^k-x^k\rightarrow 0$ a.s..

(iii): From Lemma \ref{thm:sup}, we have that $(\|\vx^{k} - {\vx}^* \|_M^2)_{k\geq 0}$ converges a.s. and so does $(\|\vx^{k} - {\vx}^* \|_M)_{k\geq 0}$, i.e., $\lim_{k\rightarrow \infty} \|\vx^{k} - {\vx}^* \|_M = \gamma$ a.s., where $\gamma$ is a $[0, +\infty )$-valued random variable. Hence, $(\|\vx^{k} - {\vx}^* \|_M)_{k\geq 0}$ must be bounded a.s. and so is $(\vx^k)_{k\geq 0}$.

\cut{(iv): From (ii), we have that for any $\vx^*=(x^*,x^*,\cdots,x^*)\in \vX^*$, there exists $\Omega_{x^*} \in \cF$ such that $P(\Omega_{x^*}) = 1$ and for any $\omega \in \Omega_{x^*}$, $(\|\vx^{k}(\omega) - {\vx}^* \|_M)_{k\geq 0}$ converges. Since $\cH$ is separable, there is a countable set $\cC \subseteq \cH$ such that $\bar{\cC} = \Fix T$, where $\bar{\cC}$ denotes the closure of $\cC$. Let $\tilde{\Omega} = \cap_{u\in \cC} \Omega_{u}$ and $\Omega_u^c$ be the complement of $\Omega_u$. We have
$$\textstyle P(\tilde{\Omega}) = 1 - P(\tilde{\Omega}^c)        = 1 - P(\cup_{u\in \cC} \Omega_{u}^c)
                \geq 1 - \sum_{u\in \cC} P(\Omega_{u}^c)  = 1.$$
Because $\bar{\cC} =\Fix T$, for a given $x^* \in \Fix T$, there must exist a sequence $(u^l)_{l\ge0} \subset \cC$ such that $\lim_{l\to\infty}u^l = x^*$. In addition, there are random variables $\gamma_l,\,l\ge 0$ such that
\begin{equation}\label{limgamma}
\textstyle \lim_{k\rightarrow \infty} \|\vx^{k}(\omega) - {\vu}^l \|_M = \gamma_l (\omega),\,\forall \omega\in\Omega_{u^l},\,\forall l\ge0,
\end{equation}
where ${\vu}^l = (u^l, u^l, ..., u^l)\in \vX^*$. By the triangle inequality, we have,
$$- \|{\vu}^l - {\vx}^* \|_M  \leq   \|\vx^{k}(\omega) - {\vx}^*\|_M -\|\vx^{k}(\omega) - {\vu}^l \|_M \leq \|{\vu}^l - {\vx}^* \|_M, \forall l, k,\,\forall \omega.$$
Therefore, for any $\omega \in \tilde{\Omega}$ and any $l\ge0$, it holds that
\begin{align*}
- \|{\vu}^l - {\vx}^* \|_M  &\textstyle \leq   \liminf_{k \rightarrow \infty} \big(\|\vx^{k}(\omega) - {\vx}^*\|_M -\|\vx^{k}(\omega) - {\vu}^l \|_M\big) \\
                                         &\textstyle  \overset{\eqref{limgamma}}=   \liminf_{k \rightarrow \infty} \|\vx^{k}(\omega) - {\vx}^*\|_M - \gamma_l(\omega) \\
                                         &\textstyle  \leq   \limsup_{k \rightarrow \infty} \|\vx^{k}(\omega) - {\vx}^*\|_M - \gamma_l(\omega) \\
                                         &\textstyle  = \limsup_{k \rightarrow \infty} \big(\|\vx^{k}(\omega) - {\vx}^*\|_M -  \|\vx^{k}(\omega) - {\vu}^l \|_M\big)\\
                                         & \textstyle \leq \|{\vu}^l - {\vx}^* \|_M.
\end{align*}
Let $l \rightarrow \infty$ in the above inequalites. Noting $\|\vu^l-\vx^*\|_M\to 0$ and $\|\vx^k(\omega)-\vx^*\|_M$ is independent of $l$, we have that $\gamma_\ell(\omega)$ converges to some $\gamma(\omega)$ and thus
$$\textstyle \lim_{k \rightarrow \infty} \|\vx^{k}(\omega) - {\vx}^*\|_M=\gamma(\omega),\,\forall \omega\in\tilde{\Omega}.$$
Since $\vx^*$ is arbitrary and $P(\tilde{\Omega})=1$, this completes the proof of (iv).
}
(iv): The proof follows directly from \cite[Proposition 2.3 (iii)]{combettes2014stochastic}. It is worth noting that $\tilde \Omega$ in the statement works for all ${\vx}^* \in \vX^*$, namely, $\tilde \Omega$ does not depend on ${\vx}^*$.

(v): By~(ii),  there exists $\hat{\Omega} \in \cF$ such that $P(\hat{\Omega} )=1$ and
\begin{align}\label{lim-xk}x^k(w)-x^{k+1}(w)\rightarrow 0,\quad\forall w\in\hat\Omega.\end{align}
For any $\omega\in\hat{\Omega}$, let $(\vx^{k_n}(\omega))_{n\ge0}$ be a weakly convergent subsequence of $(\vx^k(\omega))_{k\geq 0}$, i.e., $\vx^{k_n}(\omega) \rightharpoonup \vx$, where $\vx^{k_n}(\omega)  = (x^{k_n}(\omega), x^{k_n - 1}(\omega)..., x^{k_n - \tau}(\omega) )$ and $\vx = (u^0, ..., u^{\tau})$. Note that $\vx^{k_n}(\omega) \rightharpoonup \vx$ implies
$x^{k_n - j}(\omega) \rightharpoonup u^j,\, \forall j.$ 
Therefore, $u^i=u^j$, for any $i,j\in\{0,\cdots,\tau\}$ because $x^{k_n - i}(\omega) - x^{k_n - j}(\omega)\rightarrow 0$.


Furthermore, observing $\eta_k\ge\eta_{\min}>0$, we have
\begin{equation}\label{lim-xhat}
\lim_{n\rightarrow \infty} \hat x^{k_n} (\omega) - T \hat x^{k_n}(\omega)=\lim_{n\to\infty}S\hat x^{k_n}(\omega)
=\lim_{n\to\infty}\frac{1}{\eta_{k_n}}(x^{k_n}(\omega)-\bar{x}^{k_n+1}(\omega))=0.
\end{equation}
From the triangle inequality and the nonexpansiveness of $T$, it follows that
\begin{align*}
&\left\|x^{k_n}(\omega) - Tx^{k_n}(\omega)\right\| \\
=& \left\|x^{k_n}(\omega) - \hat x^{k_n} (\omega) + \hat x^{k_n}(\omega) - T\hat x^{k_n}(\omega) + T\hat x^{k_n}(\omega) - Tx^{k_n}(\omega)\right\| \\
 \leq& \left\|x^{k_n}(\omega) - \hat x^{k_n}(\omega)\right\| + \left\| \hat x^{k_n}(\omega) - T\hat x^{k_n}(\omega)\right\| + \left\| T\hat x^{k_n}(\omega) - Tx^{k_n}(\omega)\right\| \\
 \leq& 2 \left\|x^{k_n}(\omega) - \hat x^{k_n} (\omega)\right\| + \| \hat x^{k_n}(\omega) - T\hat x^{k_n}(\omega)\|\\
 \leq&\textstyle 2\sum_{d\in J(k_n)}\left\|x^{d}(\omega) -  x^{d+1} (\omega)\right\|+ \| \hat x^{k_n}(\omega) - T\hat x^{k_n}(\omega)\|.
\end{align*}
From~\eqref{lim-xk}, \eqref{lim-xhat}, and the above inequality, it follows
$\lim_{n\rightarrow \infty} x^{k_n}(\omega) - T x^{k_n}(\omega) = 0.$
Finally, the demiclosedness principle~\cite[Theorem 4.17]{bauschke2011convex} implies $u^0 \in \Fix T$. 
\hfill\end{proof}

\begin{theorem}\label{thm:convergence}
Under the assumptions of Lemma~\ref{lemma:convergence}, the sequence $(\vx^k)_{k\geq 0}$ weakly converges to an $\vX^*$-valued random variable a.s.. In addition, if $T$ is demicompact at 0, $(\vx^k)_{k\geq 0}$ strongly converges to an $\vX^*$-valued random variable a.s.. 
\end{theorem}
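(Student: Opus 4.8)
The plan is to run, pathwise on a single probability-one event, the classical Opial/Fej\'er argument supplied by Lemma~\ref{lemma:convergence}, and then to combine it with demicompactness for the strong statement.

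\emph{Setup.} First I would assemble one event of full probability on which everything holds at once: intersect the event $\tilde\Omega$ of Lemma~\ref{lemma:convergence}(iv) (on which $(\|\vx^k(\omega)-\vx^*\|_M)_{k\ge0}$ converges for \emph{every} $\vx^*\in\vX^*$, the key point being that $\tilde\Omega$ does not depend on $\vx^*$), the event of Lemma~\ref{lemma:convergence}(v) (on which $\mathscr{Z}(\vx^k)\subseteq\vX^*$), and the events of Lemma~\ref{lemma:convergence}(i),(ii); call the intersection $\Omega_0$, so $P(\Omega_0)=1$. Fix $\omega\in\Omega_0$. By Lemma~\ref{lemma:convergence}(iii) the sequence $(\vx^k(\omega))_{k\ge0}$ is bounded in the separable Hilbert space $\cH^{\tau+1}$, hence by weak sequential compactness it has at least one weak cluster point, and by (v) every such point lies in $\vX^*$.

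\emph{Uniqueness of the weak cluster point.} Suppose $\vx^{k_n}(\omega)\rightharpoonup\va$ and $\vx^{l_n}(\omega)\rightharpoonup\vb$ with $\va,\vb\in\vX^*$. From
\[
\|\vx^k(\omega)-\va\|_M^2-\|\vx^k(\omega)-\vb\|_M^2 = 2\langle\vx^k(\omega)\,|\,\vb-\va\rangle_M+\|\va\|_M^2-\|\vb\|_M^2
\]
and the convergence of both $\|\vx^k(\omega)-\va\|_M$ and $\|\vx^k(\omega)-\vb\|_M$ (Lemma~\ref{lemma:convergence}(iv)), the inner product $\langle\vx^k(\omega)\,|\,\vb-\va\rangle_M$ converges as $k\to\infty$. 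Evaluating that limit along $(k_n)$ and along $(l_n)$ gives $\langle\va\,|\,\vb-\va\rangle_M=\langle\vb\,|\,\vb-\va\rangle_M$, i.e.\ $\|\va-\vb\|_M^2=0$, so $\va=\vb$. Thus $(\vx^k(\omega))_{k\ge0}$ has a unique weak cluster point $\vx(\omega)\in\vX^*$, and being bounded it converges weakly to $\vx(\omega)$; on $\Omega_0^c$ set $\vx:=0$. The map $\omega\mapsto\vx(\omega)$ is an $\cH^{\tau+1}$-valued random variable because $\omega\mapsto\langle\vx(\omega)\,|\,\vz\rangle=\lim_k\langle\vx^k(\omega)\,|\,\vz\rangle$ is $\cF$-measurable for each fixed $\vz$ and $\cH^{\tau+1}$ is separable, and it takes values in $\vX^*$. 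By Lemma~\ref{lem:equiv-weak} this weak convergence also holds under the original metric. This proves the first assertion.

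\emph{Demicompact case.} I would first upgrade to $x^k-Tx^k\to0$ a.s.: since $\eta_k\ge\eta_{\min}>0$, Lemma~\ref{lemma:convergence}(i) together with~\eqref{eqn:def_bar_x} gives $S\hat x^k\to0$ a.s., i.e.\ $\hat x^k-T\hat x^k\to0$ a.s.; combining with $\hat x^k-x^k\to0$ (Lemma~\ref{lemma:convergence}(ii)) and nonexpansiveness of $T$ through $\|x^k-Tx^k\|\le 2\|x^k-\hat x^k\|+\|\hat x^k-T\hat x^k\|$ yields $x^k-Tx^k\to0$ a.s. On $\Omega_0$ the sequence $(x^k(\omega))$ is also bounded, so demicompactness of $T$ at $0$ provides a strongly convergent subsequence $x^{k_n}(\omega)\to x^\star(\omega)$; continuity of $T$ forces $x^\star(\omega)\in\Fix T$, and $x^k-x^{k+1}\to0$ (Lemma~\ref{lemma:convergence}(ii)) gives $\vx^{k_n}(\omega)\to\vx^\star(\omega):=(x^\star(\omega),\ldots,x^\star(\omega))\in\vX^*$ strongly. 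Then $\|\vx^k(\omega)-\vx^\star(\omega)\|_M$ converges by Lemma~\ref{lemma:convergence}(iv) and has a subsequence tending to $0$, so the whole sequence converges strongly to $\vx^\star(\omega)$; measurability of this limit is obtained exactly as above. This is the second assertion.

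\emph{Main obstacle.} The deterministic Opial computation is routine; the delicate part is the measure-theoretic bookkeeping — realizing all of (i)--(v) and the derived facts on a single probability-one event (which is exactly why Lemma~\ref{lemma:convergence}(iv) is stated with a uniform $\tilde\Omega$) and verifying that the weak (resp.\ strong) limit is a genuine $\vX^*$-valued random variable, where separability of $\cH$ is essential.
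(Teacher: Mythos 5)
Your proposal is correct and follows essentially the same route as the paper: the weak-convergence part is the stochastic Fej\'er/Opial argument that the paper delegates to Opial's lemma together with Lemma~\ref{lemma:convergence}(iv)--(v) (you simply write the uniqueness-of-weak-cluster-point computation out explicitly, including the measurability of the limit), and your demicompactness argument (bounded sequence, $x^k-Tx^k\to0$ a.s., strong subsequential limit in $\Fix T$, then full strong convergence via the a.s.\ convergence of $\|\vx^k-\vx^*\|_M$ on the uniform event of (iv)) is the same as the paper's proof.
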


\begin{proof}
\cut{From Lemma~\ref{lemma:convergence}(i), there exists $\hat{\Omega}\in\cF$ such that $P(\hat{\Omega})=1$ and for every $w\in\hat\Omega$, we have $\sum_{k=0}^\infty \|x^k(w)-\bar x^{k+1}(w)\|^2<\infty$. Lemma~\ref{lemma:convergence}(iii) shows that there exists $\tilde\Omega\in\cF$ such that $P(\tilde\Omega)=1$ and for every $w\in\tilde\Omega$ and $\vx^*\in\vX^*$, $(\|\vx^k(w)-\vx^*\|_M)_{k\geq0}$ converges. Then $P(\tilde{\Omega}\cap\hat{\Omega} ) = 1$. We go to show that for any $\omega \in \tilde{\Omega} \cap \hat{\Omega}$, $\vx^k(\omega)$ weakly converges to a point in $\vX^*$.

For any $\omega \in \tilde{\Omega} \cap \hat{\Omega}$, it follows from Lemma~\ref{lemma:convergence}(iii) that $(\vx^k(\omega))_{k\ge0}$ is a bounded sequence and thus must have a weak cluster point~\cite[Lemma~2.37]{bauschke2011convex}. Assume $\vu(\omega)$ and $\vv(\omega)$ to be two weak cluster points of $(\vx^k(\omega))_{k\ge0}$. From Lemma~\ref{lemma:convergence}(iv), we have $\vu(\omega),\vv(\omega)\in \vX^*$. 
Note
\begin{equation*}
\begin{aligned}
&\big\langle \vx^k(\omega) ~|~ \vu(\omega) - \vv(\omega) \big\rangle_{M} \\
=&\textstyle \frac{1}{2} (\|\vx^k(\omega) - \vv(\omega)\|_M^2 - \|\vx^k(\omega)- \vu(\omega)\|_M^2 + \|\vu(\omega)\|_M^2 - \|\vv(\omega)\|_M^2),
\end{aligned}
\end{equation*}
and from Lemma~\ref{lemma:convergence}(iii), the right hand side of the above equality converges. Thus $\langle \vx^k(\omega) ~|~ \vu(\omega) - \vv(\omega) \rangle_{M}$ converges. Since $\vu(\omega)$ and $\vv(\omega)$ are cluster points of $(\vx^k(\omega))_{k\ge0}$, we have
$$\langle \vu(\omega) ~|~ \vu(\omega) - \vv(\omega) \rangle_{M} = \langle \vv(\omega) ~|~ \vu(\omega) - \vv(\omega) \rangle_{M},$$
which indicates
$\vu(\omega) = \vv(\omega)$. Therefore, $(\vx^k(\omega))_{k\ge0}$ has a unique weak cluster point. This completes the proof of almost sure weak convergence.}
{The proof for a.s. weak convergence follows from  Opial's Lemma~\cite{peypouquet_evolution_2009,opial_weak_1967} and Lemma~\ref{lemma:convergence} (iv)-(v).}
Next we assume that $T$ is demicompact at 0. From the proof of Lemma~\ref{lemma:convergence} (v), there is $\hat\Omega\in \cF$ such that $P(\hat\Omega)=1$ and, for any $w\in\hat\Omega$ and any weakly convergent subsequence of $(\vx^{k_n}(w))_{n\ge0}$, $\lim_{n\rightarrow \infty}x^{k_n}(w)-Tx^{k_n}(w)=0$. Since $T$ is demicompact, $(x^{k_n}(w))_{n\ge 0}$ has a strongly convergent subsequence, for which we still use $(x^{k_n}(w))_{n\ge 0}$. Hence, $x^{k_n}(w)\rightarrow \bar x(w)\in \Fix T$.   Lemma~\ref{lemma:convergence} (ii) yields $\vx^{k_n}(w)\rightarrow \bar\vx(w)\in \vX^*$. Then by Lemma~\ref{lemma:convergence} (iv), there is $\tilde\Omega\in\cF$ such that $P(\tilde\Omega)=1$ and, for every $w\in\tilde\Omega$ and every $\vx^*\in\vX^*$, $(\|\vx^k(w)-\vx^*\|_M)_{k\geq 0}$ converges. Thus, for any $w\in\hat\Omega\cap\tilde\Omega$, we have $\lim_{k\to\infty}\|\vx^k(w)-\bar\vx(w)\|_M=0$. Because $P(\hat\Omega\cap\tilde\Omega)=1$, we conclude that $(\vx^k)_{k\geq0}$ strongly converges to an $\vX^*$-valued random variable a.s..
\hfill\end{proof}

\cut{Next we show the almost sure strong convergence for uniformly quasi-monotone operators $S$. If $S$ is uniformly quasi-monotone, $\Fix T$ is a singleton. Indeed, let $x,y\in \Fix T$. Since $0\in Sx$ and $0\in Sy$, we have $0=\langle 0-0,x-y\rangle\geq \phi(\|x-y\|)$ and thus $\|x-y\|=0$ or $x=y$.

For any $w\in \hat\Omega\cup \tilde\Omega$, we have $x^k(w)-\bar x^{k+1}(w)\rightarrow 0$ because of Lemma~\ref{lemma:convergence}(i). Therefore we have $\hat x^k(w)-x^k(w)\rightarrow 0$ because of~\eqref{eqn:inconsist}. From the almost sure weak convergence, we have $x^k(w)\rightharpoonup  x^*\in \Fix T$. Thus $\hat x^k(w)\rightharpoonup x^*$ and
\begin{align*}
\textstyle\phi(\|\hat x^k(w)-x^*\|)\leq& \langle S\hat x^k(w)-Sx^*,\hat x^k(w)-x^*\rangle\\
=&\frac{1}{\eta_k}\langle x^k(w)-\bar x^{k+1}(w), \hat x^k(w)-x^*\rangle \rightarrow 0,
\end{align*}
or $\hat x^k(w)\rightarrow x^*$. Combining with $\hat x^k(w)-x^k(w)\rightarrow 0$, we have $x^k(w)\rightarrow x^*$.
\hfill\end{proof}}

\begin{remark} For the generalization in Section~\ref{sec:general}, we need to replace \eqref{term2} by
\begin{align*}
\textstyle\sum_{i=1}^m\frac{1}{p_i} \|U_i\circ S\hat{x}^{k}\|^2\le\frac{1}{p_{\min}} \sum_{i=1}^m\|U_i\circ S\hat{x}^{k}\|^2\leq \frac{C}{p_{\min}} \|S\hat{x}^{k}\|^2=\frac{C}{\eta_k^2p_{\min}}\|x^k-\bar{x}^{k+1}\|^2,
\end{align*}
and update the step size condition to $\eta_k \in [\eta_{\min}, \frac{cmp_{\min}}{2\tau \sqrt{p_{\min}}+C}]$. Then the proofs of Theorem~\ref{thm:convergence} and Lemma~\ref{lemma:convergence} will go through and yield the same convergence result. 
\end{remark}



\subsection{Linear convergence\cut{ for quasi-strongly monotone operator $S$}}\label{sec:linear}
In this section, we establish  linear convergence  under the assumption that $S$ is quasi-strongly monotone.
We first present a key lemma.

\begin{lemma}\label{lemma:linear_induction}
Assume that the step size is fixed, i.e., $\eta_k  = \eta$, and satisfies
\begin{equation}\label{eta-cond1}
\textstyle 0< \eta \leq \underline{\eta}_1:=(1 - \frac{1}{\rho}) \frac{m \sqrt{p_{\min}}}{8} \frac{\rho^{1/2} - 1}{ \rho^{(\tau + 1)/2} - 1}
\end{equation}
for some $\rho>1$. Then we have, for all $k\geq 1$,
\begin{equation} \label{eqn:linear_rate}
\mathbb{E} \|\bar{x}^{k} - x^{k-1}\|^2 \leq \rho \mathbb{E} \|\bar{x}^{k+1} - x^k\|^2.
\end{equation}
\end{lemma}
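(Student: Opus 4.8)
The plan is to turn \eqref{eqn:linear_rate} into a deterministic one-step recursion between the root-mean-square sizes of consecutive full updates and then close that recursion by induction on $k$ via a geometric series.

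\textbf{Deterministic recursion.} Since the step size is constant, \eqref{eqn:def_bar_x} gives $\bar x^{j+1}-x^j=-\eta S\hat x^j$ for every $j$, so $\|\bar x^k-x^{k-1}\|=\eta\|S\hat x^{k-1}\|$. As $T$ is nonexpansive, $S=I-T$ obeys $\|Sa-Sb\|\le\|a-b\|+\|Ta-Tb\|\le 2\|a-b\|$, hence $\|S\hat x^{k-1}\|\le\|S\hat x^k\|+2\|\hat x^{k-1}-\hat x^k\|$. Expanding $\hat x^{k-1}$ and $\hat x^k$ with \eqref{eqn:inconsist}, the difference $\hat x^{k-1}-\hat x^k$ is an integer combination of the interim increments $x^d-x^{d+1}$ for $d\in\{k-1-\tau,\dots,k-1\}$ (those with $d<0$ vanish because $x^j=x^0$ for $j<0$), and each coefficient has modulus at most $2$ since a given index $d$ lies in at most one of $J(k-1)$ and $\{k-1\}$, plus in at most one copy of $J(k)$. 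Thus $\|\hat x^{k-1}-\hat x^k\|\le 2\sum_{d=k-1-\tau}^{k-1}\|x^d-x^{d+1}\|$, and after multiplying by $\eta$,
\[\|\bar x^k-x^{k-1}\|\;\le\;\|\bar x^{k+1}-x^k\|+4\eta\sum_{d=k-1-\tau}^{k-1}\|x^d-x^{d+1}\|\qquad\text{a.s.}\]

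\textbf{Passing to $L^2$.} Write $b_j:=\big(\mathbb{E}\|\bar x^{j+1}-x^j\|^2\big)^{1/2}$. Taking $L^2(\Omega)$ norms of the a.s.\ inequality (Minkowski) and invoking the conditional bound $\mathbb{E}\big(\|x^d-x^{d+1}\|^2\mid\cX^d\big)\le\frac1{m^2p_{\min}}\|\bar x^{d+1}-x^d\|^2$ established in the proof of Theorem~\ref{thm:fund_inquality} (so that, after a total expectation, $(\mathbb{E}\|x^d-x^{d+1}\|^2)^{1/2}\le\frac1{m\sqrt{p_{\min}}}\,b_d$), I obtain for all $k\ge1$
\[b_{k-1}\;\le\;b_k+\frac{4\eta}{m\sqrt{p_{\min}}}\sum_{d=\max(0,\,k-1-\tau)}^{k-1}b_d.\]
Note it is this conditional second-moment bound that yields the $\sqrt{p_{\min}}$ appearing in \eqref{eta-cond1}; the cruder pathwise bound $\|x^d-x^{d+1}\|\le\frac1{mp_{\min}}\|\bar x^{d+1}-x^d\|$ would only give $p_{\min}$.

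\textbf{Induction on $k$.} I would then prove $b_{k-1}\le\rho^{1/2}b_k$ for all $k\ge1$, which squared is exactly \eqref{eqn:linear_rate}. Assuming it for all indices $\le k-1$, chaining gives $b_{k-1-i}\le\rho^{i/2}b_{k-1}$ for $0\le i\le\min(\tau,k-1)$, so $\sum_{d=\max(0,k-1-\tau)}^{k-1}b_d\le\big(\sum_{i=0}^{\tau}\rho^{i/2}\big)b_{k-1}=\frac{\rho^{(\tau+1)/2}-1}{\rho^{1/2}-1}\,b_{k-1}$; substituting into the scalar recursion yields $b_{k-1}\big(1-\tfrac{4\eta}{m\sqrt{p_{\min}}}\cdot\tfrac{\rho^{(\tau+1)/2}-1}{\rho^{1/2}-1}\big)\le b_k$. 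The hypothesis $\eta\le\underline{\eta}_1$ is precisely what makes $\tfrac{4\eta}{m\sqrt{p_{\min}}}\cdot\tfrac{\rho^{(\tau+1)/2}-1}{\rho^{1/2}-1}\le\tfrac{1-1/\rho}{2}$, and since $\tfrac{1-1/\rho}{2}\le 1-\rho^{-1/2}$ and $1-\tfrac{1-1/\rho}{2}=\tfrac{1+1/\rho}{2}\ge\rho^{-1/2}$ (both reduce to $(\rho^{1/2}-1)^2\ge0$), it follows that $b_{k-1}\le\rho^{1/2}b_k$. The base case $k=1$ is the same computation with a single term $b_0$ in the sum (the geometric factor is then $1$, and $\tfrac{\rho^{1/2}-1}{\rho^{(\tau+1)/2}-1}\le1$ handles it). I expect the bookkeeping in the deterministic step to be the main obstacle — pinning down exactly which increments $x^d-x^{d+1}$ enter $\hat x^{k-1}-\hat x^k$ and bounding their coefficients — together with arranging the geometric-series estimate so the precise constant in $\underline{\eta}_1$ (in particular the $\sqrt{p_{\min}}$) emerges.
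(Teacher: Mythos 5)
Your proposal is correct, and it rests on the same backbone as the paper's proof: both express $\bar x^k-x^{k-1}=\eta S\hat x^{k-1}$, compare it with $\bar x^{k+1}-x^k=\eta S\hat x^k$ through the delayed increments supplied by \eqref{eqn:inconsist} (with the same factor-2 bookkeeping over $d\in\{k-1-\tau,\dots,k-1\}$), convert increments back to full updates via the conditional bound $\mathbb{E}\big(\|x^d-x^{d+1}\|^2\mid\cX^d\big)\le\tfrac{1}{m^2p_{\min}}\|\bar x^{d+1}-x^d\|^2$ (the source of $\sqrt{p_{\min}}$), and close an induction in $k$ with the geometric sum $\sum_{t=0}^{\tau}\rho^{t/2}=\tfrac{\rho^{(\tau+1)/2}-1}{\rho^{1/2}-1}$. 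Where you differ is the execution of the key estimate: the paper squares first, using $\|a\|^2-\|b\|^2\le 2\|a\|\,\|b-a\|$, and then disposes of each cross term $\mathbb{E}\|\bar x^k-x^{k-1}\|\,\|x^{k-t}-x^{k-t-1}\|$ by Young's inequality with the tuned parameter $a=m\sqrt{p_{\min}}\,\rho^{-t/2}$, concluding from $\tfrac{1}{1-\eta\theta}\le\rho$; you instead derive an almost-sure first-order inequality from the $2$-Lipschitz continuity of $S$, pass to $L^2$ norms $b_j=(\mathbb{E}\|\bar x^{j+1}-x^j\|^2)^{1/2}$ by Minkowski, and run the induction on the equivalent statement $b_{k-1}\le\rho^{1/2}b_k$, closing with the elementary inequality $\tfrac{1+1/\rho}{2}\ge\rho^{-1/2}$. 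Your route avoids the per-term parameter tuning and produces the constant $4$ in place of the paper's $8$, so the stated bound \eqref{eta-cond1} suffices with slack (your argument in fact proves \eqref{eqn:linear_rate} under the weaker requirement $\eta\le(1-\rho^{-1/2})\tfrac{m\sqrt{p_{\min}}}{4}\tfrac{\rho^{1/2}-1}{\rho^{(\tau+1)/2}-1}$); the paper's Young-based version has the minor advantage of staying entirely in the squared-norm quantities that the subsequent linear-rate argument of Theorem~\ref{thm:strongly_monotone} consumes. Your handling of the base case and of the truncation at $d\ge 0$ (via $x^j=x^0$ for $j<0$) matches the paper's treatment, so no gap remains.
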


\cut{
\begin{proof}
We prove \eqref{eqn:linear_rate} by induction. First, for any $k \geq 1$, we observe that
\begin{align}
& \|\bar{x}^k - x^{k-1}\|^2 - \|\bar{x}^{k+1} - x^k\|^2 \cr
\leq \, &2 \| \bar{x}^k - x^{k-1} \| \| \bar{x}^{k+1} - x^k -  \bar{x}^k + x^{k-1}\| \quad \text{(by $\|a\|^2 - \|b\|^2 \leq 2 \|a\| \|b - a\|$)}\cr
=\, & 2 \| \bar{x}^k - x^{k-1} \| \| \eta S(\hat x^k) -  \eta S(\hat x^{k -1})\| \cr
\textstyle \leq \, & 4 \eta \| \bar{x}^k - x^{k-1} \| \| \hat x^{k} - \hat x^{k -1}\| \label{eq-mid}\\
 \leq \, & 4 \eta \| \bar{x}^k - x^{k-1} \| \big ( \| x^k  - \hat x^{k}\| + \|x^k - x^{k-1}\| + \| x^{k-1}- \hat x^{k -1}\| \big) \cr
= \, & \textstyle 4 \eta \| \bar{x}^k - x^{k-1} \| \left ( \| \sum_{d \in J(k)}(x^d - x^{d+1})\| + \|x^k - x^{k-1}\|\right.\cr
& \textstyle \hspace{3cm}+ \left.\|\sum_{d \in J(k-1)}(x^d - x^{d+1})\| \right) \cr
 \leq \, & \textstyle 4 \eta \| \bar{x}^k - x^{k-1} \| \left ( \sum_{d \in J(k)} \| x^d - x^{d+1}\| + \|x^k - x^{k-1}\| \right.\cr
 & \textstyle \hspace{3cm}+ \left.\sum_{d \in J(k-1)}\|x^d - x^{d+1}\| \right) \cr
\leq \, & \textstyle 4 \eta \| \bar{x}^k - x^{k-1} \|  \big( 2 \sum_{t=0}^{\tau } \| x^{k-t}  - x^{k - t -1}\| \big)\cr
= \, & \textstyle 8 \eta \sum_{t=0}^{\tau }  \| \bar{x}^k - x^{k-1} \| \| x^{k-t}  - x^{k - t -1}\| \label{eqn:linear_convergence_basic_step}
\end{align}

For the basic case,  we know that $\hat x^0 = x^0$, $\hat x^1 \in \{x^0, x^1\}$. Letting $k=1$ in \eqref{eq-mid} gives
\begin{equation}\label{eqn:linear_basic_case}
\begin{aligned}
       & \mathbb{E} \|\bar{x}^1 - x^{0}\|^2 -  \mathbb{E} \|\bar{x}^{2} - x^1\|^2 \\
\leq \, &  4\eta \mathbb{E}  \| \bar{x}^1 - x^{0} \|   \| x^1  - x^{0}\| \\
\leq \, &  \textstyle 2 \eta \big(\frac{1}{m \sqrt{p_{\min}}} \mathbb{E}  \| \bar{x}^1 - x^{0} \|^2  + m \sqrt{p_{\min}} \mathbb{E} \| x^1  - x^{0}\|^2 \big)\\
= \, &\textstyle  2 \eta \big(\frac{1}{m \sqrt{p_{\min}}} \mathbb{E}  \| \bar{x}^1 - x^{0} \|^2  + m \sqrt{p_{\min}} \sum_{i = 1}^m p_{i} \frac{\eta^2}{m^2 p_{i}^2} (S_i x^0)^2  \big)\\
\leq \, & \textstyle 2 \eta \big(\frac{1}{m \sqrt{p_{\min}}} \mathbb{E}  \| \bar{x}^1 - x^{0} \|^2  + \frac{1}{m \sqrt{p_{\min}}} \mathbb{E}  \| \bar{x}^1 - x^{0} \|^2  \big) \\
= \, & \textstyle \frac{4 \eta} {m \sqrt{p_{\min}}}  \mathbb{E}  \| \bar{x}^1 - x^{0} \|^2.
\end{aligned}
\end{equation}
Rearranging \eqref{eqn:linear_basic_case} gives $\mathbb{E} \|\bar{x}^1 - x^{0}\|^2 \leq \frac{1}{1 - \frac{4 \eta} {m \sqrt{p_{\min}}}} \mathbb{E} \| \bar{x}^2  - x^{1}\|^2.$ From \eqref{eta-cond1} and $\rho>1$, it holds that
$$0 < \eta \leq (1 - \frac{1}{\rho}) \frac{m \sqrt{p_{\min}}}{8} \frac{\rho^{1 / 2} - 1}{ \rho^{(\tau + 1) 2} - 1} \leq (1 - \frac{1}{\rho}) \frac{m\sqrt{p_{\min}}}{4}.$$ Hence, $\mathbb{E} \|\bar{x}^1 - x^{0}\|^2 \leq \rho \mathbb{E} \| \bar{x}^2  - x^{1}\|^2$.

For the induction step, we have
\begin{align*}
&\mathbb{E} \|\bar{x}^k - x^{k-1}\| \|x^{k-t} - x^{k-t-1}\| \\
\leq \, & \textstyle \frac{1}{2} \mathbb{E} \big\{ a \|x^{k-t} -x^{k-t-1}\|^2 + \frac{1}{a} \|\bar{x}^{k} - x^{k-1}\|^2 \big\} \\
\leq \, &  \textstyle \frac{1}{2} \mathbb{E} \big\{ \frac{a}{m^2 p_{\min}} \|\bar{x}^{k-t} -x^{k-t-1}\|^2 + \frac{1}{a} \|\bar{x}^{k} - x^{k-1}\|^2 \big\} \\
\leq \, &\textstyle \frac{1}{2}  \big\{ \frac{a \rho^t}{m^2 p_{\min}}  + \frac{1}{a} \big\} \mathbb{E} \|\bar{x}^{k} - x^{k-1}\|^2 \\
= \, & \textstyle \frac{ {\rho}^{t/2} }{m\sqrt{p_{\min}}}   \mathbb{E} \|\bar{x}^{k} - x^{k-1}\|^2. \qquad (\text{by letting } a = m \sqrt{p_{\min}}\rho^{ - { t/2}})\\
\end{align*}
Taking the expectation on both sides of the inequality \eqref{eqn:linear_convergence_basic_step} gives
\begin{align*}
& \textstyle \mathbb{E}\|\bar{x}^k - x^{k-1}\|^2 - \mathbb{E} \|\bar{x}^{k+1} - x^k\|^2 \\
\leq & \textstyle 8 \eta \sum_{t=0}^{\tau }  \mathbb{E} \| \bar{x}^k - x^{k-1} \| \| x^{k-t}  - x^{k - t -1}\| \\
\leq & \textstyle \frac{8 \eta}{m \sqrt{p_{\min}}} \sum_{t=0}^{\tau } \rho^{t / 2} \mathbb{E} \| \bar{x}^k - x^{k-1} \|^2 \\
\leq & \textstyle \frac{8 \eta}{m \sqrt{p_{\min}}} \frac{1 - \rho^{(\tau + 1 )/2}}{1 - \rho^{1 / 2}}\mathbb{E}  \| \bar{x}^k - x^{k-1} \|^2, 
\end{align*}
Rearranging the above inequality and using \eqref{eta-cond1} gives
 $$\mathbb{E} \|\bar{x}^{k} - x^{k-1}\|^2 \cut{\leq \frac{1}{1 - \eta \theta} \mathbb{E} \| \bar{x}^{k+1}  - x^{k}\|^2 }\leq \rho \mathbb{E} \| \bar{x}^{k+1}  - x^{k}\|^2. $$
 \hfill
\end{proof}
}

\begin{proof}
We prove \eqref{eqn:linear_rate} by induction. First, based on the inequality $\|a\|^2 - \|b\|^2 \leq 2 \|a\| \|b - a\|$ we observe that, for any $k \geq 1$,
\begin{align}\label{eq-mid}
\|\bar{x}^k - x^{k-1}\|^2 - \|\bar{x}^{k+1} - x^k\|^2 \leq \, &2 \| \bar{x}^k - x^{k-1} \| \| \bar{x}^{k+1} - x^k -  \bar{x}^k + x^{k-1}\| \cr
=\, & 2 \| \bar{x}^k - x^{k-1} \| \| \eta S(\hat x^k) -  \eta S(\hat x^{k -1})\| \cr
\textstyle \leq \, & 4 \eta \| \bar{x}^k - x^{k-1} \| \| \hat x^{k} - \hat x^{k -1}\|.
\end{align}
Applying the triangle inequality and \eqref{eqn:inconsist} yields
\begin{align}\label{eqn:linear_convergence_basic_step}
\| \hat x^{k} - \hat x^{k -1}\| \leq & \| x^k  - \hat x^{k}\| + \|x^k - x^{k-1}\| + \| x^{k-1}- \hat x^{k -1}\| \cr
 \leq \, &  \textstyle \underset{d \in J(k)}\sum \| x^d - x^{d+1}\| + \|x^k - x^{k-1}\| + \underset{d \in J(k-1)}\sum\|x^d - x^{d+1}\| \cr
\leq \, & \textstyle  2 \sum_{t=0}^{\tau } \| x^{k-t}  - x^{k - t -1}\|.
\end{align}
For the basic case,  we have $\hat x^0 = x^0$, $\hat x^1 \in \{x^0, x^1\}$. Letting $k=1$ in \eqref{eq-mid} gets us
\begin{align*}
        \mathbb{E} \|\bar{x}^1 - x^{0}\|^2 -  \mathbb{E} \|\bar{x}^{2} - x^1\|^2
\leq \, &  4\eta \mathbb{E}  \| \bar{x}^1 - x^{0} \|   \| x^1  - x^{0}\| \\
\leq \, &  \textstyle 2 \eta \big(\frac{1}{m \sqrt{p_{\min}}} \mathbb{E}  \| \bar{x}^1 - x^{0} \|^2  + m \sqrt{p_{\min}} \mathbb{E} \| x^1  - x^{0}\|^2 \big)\\
= \, &\textstyle  2 \eta \big(\frac{1}{m \sqrt{p_{\min}}} \mathbb{E}  \| \bar{x}^1 - x^{0} \|^2  + m \sqrt{p_{\min}} \sum_{i = 1}^m p_{i} \frac{\eta^2}{m^2 p_{i}^2} (S_i x^0)^2  \big)\\
\leq \, & \textstyle 2 \eta \big(\frac{1}{m \sqrt{p_{\min}}} \mathbb{E}  \| \bar{x}^1 - x^{0} \|^2  + \frac{1}{m \sqrt{p_{\min}}} \mathbb{E}  \| \bar{x}^1 - x^{0} \|^2  \big) \\
= \, & \textstyle \frac{4 \eta} {m \sqrt{p_{\min}}}  \mathbb{E}  \| \bar{x}^1 - x^{0} \|^2.
\end{align*}
Rearranging the above inequality yields $\mathbb{E} \|\bar{x}^1 - x^{0}\|^2 \leq \frac{1}{1 - \frac{4 \eta} {m \sqrt{p_{\min}}}} \mathbb{E} \| \bar{x}^2  - x^{1}\|^2.$ By \eqref{eta-cond1} and $\rho>1$, it holds that
$\textstyle 0 < \eta \leq (1 - \frac{1}{\rho}) \frac{m \sqrt{p_{\min}}}{8} \frac{\rho^{1/2} - 1}{ \rho^{(\tau + 1)/2} - 1} \leq (1 - \frac{1}{\rho}) \frac{m\sqrt{p_{\min}}}{4}.$ Hence, $\mathbb{E} \|\bar{x}^1 - x^{0}\|^2 \leq \rho \mathbb{E} \| \bar{x}^2  - x^{1}\|^2$.

For the induction step, applying Young's inequality gives us
\begin{align*}
\mathbb{E} \|\bar{x}^k - x^{k-1}\| \|x^{k-t} - x^{k-t-1}\|
\leq \, & \textstyle \frac{1}{2} \mathbb{E} \big\{ a \|x^{k-t} -x^{k-t-1}\|^2 + \frac{1}{a} \|\bar{x}^{k} - x^{k-1}\|^2 \big\} \\
\leq \, &  \textstyle \frac{1}{2} \mathbb{E} \big\{ \frac{a}{m^2 p_{\min}} \|\bar{x}^{k-t} -x^{k-t-1}\|^2 + \frac{1}{a} \|\bar{x}^{k} - x^{k-1}\|^2 \big\} \\
\leq \, &\textstyle \frac{1}{2}  \big\{ \frac{a \rho^t}{m^2 p_{\min}}  + \frac{1}{a} \big\} \mathbb{E} \|\bar{x}^{k} - x^{k-1}\|^2 \\
= \, & \textstyle \frac{ {\rho}^{t/2} }{m\sqrt{p_{\min}}}   \mathbb{E} \|\bar{x}^{k} - x^{k-1}\|^2. \qquad (\text{letting } a = m \sqrt{p_{\min}}\rho^{ - { t/ 2}})
\end{align*}
Taking the expectation on \eqref{eqn:linear_convergence_basic_step} and combining it with \eqref{eq-mid} yield
\begin{align*}
& \textstyle \mathbb{E}\|\bar{x}^k - x^{k-1}\|^2 - \mathbb{E} \|\bar{x}^{k+1} - x^k\|^2
\leq  \textstyle 8 \eta \sum_{t=0}^{\tau }  \mathbb{E} \| \bar{x}^k - x^{k-1} \| \| x^{k-t}  - x^{k - t -1}\| \\
\leq & \textstyle \frac{8 \eta}{m \sqrt{p_{\min}}} \sum_{t=0}^{\tau } \rho^{t /2} \mathbb{E} \| \bar{x}^k - x^{k-1} \|^2
\leq  \textstyle \frac{8 \eta}{m \sqrt{p_{\min}}} \frac{1 - \rho^{(\tau + 1 )/2}}{1 - \rho^{1 /2}}\mathbb{E}  \| \bar{x}^k - x^{k-1} \|^2. 
\end{align*}
Finally, rearranging the above inequality and using \eqref{eta-cond1} lead to
 $\mathbb{E} \|\bar{x}^{k} - x^{k-1}\|^2 \cut{\leq \frac{1}{1 - \eta \theta} \mathbb{E} \| \bar{x}^{k+1}  - x^{k}\|^2 }\leq \rho \mathbb{E} \| \bar{x}^{k+1}  - x^{k}\|^2. $
 This completes the proof.
 \end{proof}

With this lemma, we are ready to derive the linear convergence rate of ARock.

\begin{theorem}[Linear convergence\cut{ for quasi-$\mu$-strongly monotone operator}]\label{thm:strongly_monotone}
Assume that $S$ is quasi-$\mu$-strongly monotone with $\mu >0$. Let $\beta\in (0,1)$ and $(x^k)_{k\geq0}$ be the sequence generated by ARock with a constant stepsize $\eta \in (0, \min \{\underline{\eta}_1, \underline{\eta}_2\}]$, where $\underline{\eta}_1$ is given in~\eqref{eta-cond1} and
\begin{equation}\label{eta-cond2}\textstyle
\underline{\eta}_2=\frac{-b + \sqrt{b^2 + 4(1-\beta)a}}{2a},\quad a = \frac{2\beta \mu \tau }{ m^2 p_{\min}} \frac{\rho(\rho^\tau - 1)}{\rho - 1},\quad b = \frac{1}{ m p_{\min}} + \frac{2}{ m}\sqrt{ \frac{\rho(\rho^\tau - 1)\tau}{(\rho - 1) p_{\min}}}.
\end{equation}
Then
\begin{equation}\label{linear-convg}
\textstyle\mathbb{E}\left(\|x^{k} - x^* \|^2\right)\le \left(1-\frac{\beta\mu\eta}{m}\right)^k \|x^{0} - x^* \|^2.
\end{equation}

\end{theorem}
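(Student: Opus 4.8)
\emph{Step 1 (a quasi-strongly-monotone one-step inequality).} The plan is to rerun the computation behind Lemma~\ref{lemma:fund} verbatim up to the estimate of $\langle S\hat x^k, x^*-x^k\rangle$ in \eqref{term1}, but there I would replace the purely cocoercive bound $\langle S\hat x^k-Sx^*, x^*-\hat x^k\rangle\le-\tfrac12\|S\hat x^k\|^2$ by the convex combination of $\tfrac12$-cocoercivity (Lemma~\ref{lemma:a-avg}) and quasi-$\mu$-strong monotonicity, both evaluated at the pair $(\hat x^k,x^*)$ (legitimate since $x^*\in\zer S$): for any $\theta\in(0,1]$,
\[
\langle S\hat x^k-Sx^*,\ \hat x^k-x^*\rangle\ \ge\ \theta\mu\|\hat x^k-x^*\|^2+\tfrac{1-\theta}{2}\|S\hat x^k\|^2 .
\]
Plugging this into \eqref{eqn:fund_inquality0}'s derivation gives, for every $\gamma>0$,
\[
\mathbb{E}\big(\|x^{k+1}-x^*\|^2\,\big|\,\cX^k\big)\ \le\ \|x^k-x^*\|^2-\tfrac{2\theta\mu\eta}{m}\|\hat x^k-x^*\|^2+\tfrac{\gamma}{m}\!\!\sum_{d\in J(k)}\!\!\|x^d-x^{d+1}\|^2+\tfrac1m\Big(\tfrac{|J(k)|}{\gamma}+\tfrac1{mp_{\min}}-\tfrac{1-\theta}{\eta}\Big)\|x^k-\bar x^{k+1}\|^2 .
\]

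\emph{Step 2 (from $\hat x^k$ back to $x^k$).} Next I would convert the contraction term. Using $\hat x^k-x^k=\sum_{d\in J(k)}(x^d-x^{d+1})$ from \eqref{eqn:inconsist}, the reverse triangle inequality gives $\|\hat x^k-x^*\|^2\ge\|x^k-x^*\|^2-2\|x^k-x^*\|\,\|\hat x^k-x^k\|$, and then a Young's inequality with a free parameter splits the cross term into a small multiple of $\|x^k-x^*\|^2$ (absorbed into the $\theta$-contraction) plus a multiple of $\sum_{d\in J(k)}\|x^d-x^{d+1}\|^2$ (via $\|\hat x^k-x^k\|^2\le\tau\sum_{d\in J(k)}\|x^d-x^{d+1}\|^2$). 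The square root inside $b$ in \eqref{eta-cond2} is exactly what comes out of jointly optimizing this Young parameter with $\gamma$.

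\emph{Step 3 (absorbing the delay terms and concluding).} Taking total expectations, for each $d\in J(k)\subseteq\{k-1,\dots,k-\tau\}$ I would bound $\mathbb{E}\|x^d-x^{d+1}\|^2\le\tfrac1{m^2p_{\min}}\mathbb{E}\|\bar x^{d+1}-x^d\|^2$ (computing the conditional expectation of the update \eqref{eqn:asyn_update}), then apply Lemma~\ref{lemma:linear_induction} iteratively to get $\mathbb{E}\|\bar x^{d+1}-x^d\|^2\le\rho^{\,k-d}\,\mathbb{E}\|\bar x^{k+1}-x^k\|^2$; summing the resulting geometric series over $d$ produces the factor $\tfrac{\rho(\rho^\tau-1)}{\rho-1}$ that appears in $a$. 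The leftover $\|x^k-\bar x^{k+1}\|^2=\eta^2\|S\hat x^k\|^2$ is then killed by the negative term $-\tfrac{1-\theta}{m\eta}\|x^k-\bar x^{k+1}\|^2$ retained from Step 1: requiring the net coefficient of $\mathbb{E}\|\bar x^{k+1}-x^k\|^2$ to be nonpositive is (after clearing $m\eta$) a quadratic-in-$\eta$ inequality of the form $a\eta^2+b\eta\le1-\beta$, i.e. $\eta\le\underline\eta_2$, while $\eta\le\underline\eta_1$ is precisely the hypothesis making Lemma~\ref{lemma:linear_induction} applicable. With $\theta$, $\gamma$ and the Young parameter tuned so that the residual $\|x^k-x^*\|^2$-coefficient is at most $1-\tfrac{\beta\mu\eta}{m}$, this leaves $\mathbb{E}\|x^{k+1}-x^*\|^2\le(1-\tfrac{\beta\mu\eta}{m})\,\mathbb{E}\|x^k-x^*\|^2$ for $k\ge1$; the $k=0$ case is immediate because $\hat x^0=x^0$ and $J(0)=\varnothing$. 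Iterating yields \eqref{linear-convg}.

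\emph{Main obstacle.} The delicate point is Step 3: a priori the asynchrony error $\sum_{d\in J(k)}\|x^d-x^{d+1}\|^2$ can be far larger than the current progress $\|\bar x^{k+1}-x^k\|^2$, so it cannot simply be absorbed. Lemma~\ref{lemma:linear_induction} is what repairs this, but only for step sizes small enough that the geometric blow-up $\rho^\tau$ stays controlled; simultaneously keeping $\eta$ large enough that the retained contraction $1-\tfrac{\beta\mu\eta}{m}$ remains meaningful, and choosing the several free parameters ($\theta$, $\gamma$, the Young constant, and $\rho$) so that \emph{every} error term is dominated, is where essentially all the work concentrates.
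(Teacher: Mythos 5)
Your overall route is the paper's: the same $\beta$-weighted combination of $\tfrac12$-cocoercivity and quasi-$\mu$-strong monotonicity inserted into the derivation of Lemma~\ref{lemma:fund} at \eqref{term1}, the same conversion of the delayed iterate via \eqref{eqn:inconsist}, the same bound $\EE\|x^d-x^{d+1}\|^2\le\tfrac{1}{m^2p_{\min}}\EE\|\bar x^{d+1}-x^d\|^2$ combined with Lemma~\ref{lemma:linear_induction} and the geometric sum $\tfrac{\rho(\rho^\tau-1)}{\rho-1}$, the same optimization over $\gamma$, and the same quadratic-in-$\eta$ condition $a\eta^2+b\eta\le 1-\beta$ yielding $\underline\eta_2$. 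In structure this is the published proof.

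However, your Step 2 as written does not reproduce the constants claimed in the theorem, so the proof of the \emph{stated} step-size range and rate has a gap. The paper handles $-\beta\mu\|\hat x^k-x^*\|^2$ by writing $\hat x^k-x^*=(x^k-x^*)+\sum_{d\in J(k)}(x^d-x^{d+1})$ and using $-\|u+v\|^2\le-\tfrac12\|u\|^2+\|v\|^2$ together with Cauchy--Schwarz, which yields the pair of coefficients $\bigl(-\tfrac{\beta\mu\eta}{m}\|x^k-x^*\|^2,\ +\tfrac{2\beta\mu\eta\tau}{m}\sum_{d\in J(k)}\|x^d-x^{d+1}\|^2\bigr)$ and hence $a=\tfrac{2\beta\mu\tau}{m^2p_{\min}}\tfrac{\rho(\rho^\tau-1)}{\rho-1}$. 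Your reverse-triangle-plus-Young version gives $\|u+v\|^2\ge(1-\epsilon)\|u\|^2-\tfrac1\epsilon\|v\|^2$; to retain the contraction factor $1-\tfrac{\beta\mu\eta}{m}$ you need $2\theta(1-\epsilon)\ge\beta$, and then the delay coefficient is $\tfrac{2\theta\mu\eta\tau}{m\epsilon}=\tfrac{\beta\mu\eta\tau}{m\,\epsilon(1-\epsilon)}\ge\tfrac{4\beta\mu\eta\tau}{m}$ since $\epsilon(1-\epsilon)\le\tfrac14$; no admissible choice of $(\theta,\epsilon)$ attains the paper's $\tfrac{2\beta\mu\eta\tau}{m}$. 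The net effect is that your $a$ is at least doubled, so your argument only certifies linear convergence for $\eta\le\frac{-b+\sqrt{b^2+8(1-\beta)a}}{4a}<\underline\eta_2$, strictly short of the theorem's range \eqref{eta-cond2}. Relatedly, your attribution of the square root in $b$ to ``jointly optimizing this Young parameter with $\gamma$'' is off: that term comes from optimizing $\gamma$ alone, balancing the $\tau/\gamma$ term inherited from \eqref{term1} against the $\gamma$-weighted delay terms after Lemma~\ref{lemma:linear_induction}; your Step-2 splitting feeds into the $\eta^2$-coefficient $a$, not into $b$. The fix is simply to replace the reverse-triangle step by the inequality $\|u\|^2\le 2\|u+v\|^2+2\|v\|^2$ (as the paper does); with that substitution your outline matches the paper's proof and the stated constants follow.
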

\begin{proof}
Following the proof of Lemma \ref{lemma:fund} and starting from \eqref{term1}, we have
\begin{equation*}\label{thm:linear_cvg}
\begin{aligned}
&\langle S \hat{x}^{k}, x^* - x^k \rangle\\
\le&\textstyle \langle S \hat{x}^{k}-Sx^*, x^* - \hat{x}^k\rangle+\frac{1}{2\eta}\sum_{d\in J(k)}\big(\frac{1}{\gamma}\|x^k-\bar{x}^{k+1}\|^2+ \gamma\|x^{d} - x^{d+1}\|^2\big)\cr
\le&\textstyle -{\beta\mu}\|\hat{x}^k-x^*\|^2-\frac{1-\beta}{2}\|S \hat{x}^{k}\|^2+\frac{1}{2\eta}\sum_{d\in J(k)}(\frac{1}{\gamma}\|x^k-\bar{x}^{k+1}\|^2+ \gamma\|x^{d} - x^{d+1}\|^2)\cr
=&\textstyle -{\beta\mu}\|{x}^k-x^*+\sum_{d\in J(k)} (x^{d} - x^{d+1})\|^2-\frac{1-\beta}{2\eta^2}\|x^k-\bar{x}^{k+1}\|^2 \\
 &\textstyle +\frac{|J(k)|}{2\gamma\eta}\|x^k-\bar{x}^{k+1}\|^2+\frac{\gamma}{2\eta}\sum_{d\in J(k)}\|x^{d} - x^{d+1}\|^2\cr
\le&\textstyle -\frac{\beta\mu}{2}\|{x}^k-x^*\|^2+{\beta\mu}\|\sum_{d\in J(k)} (x^{d} - x^{d+1})\|^2-\frac{1-\beta}{2\eta^2}\|x^k-\bar{x}^{k+1}\|^2 \\
& \textstyle+\frac{|J(k)|}{2\gamma\eta}\|x^k-\bar{x}^{k+1}\|^2+\frac{\gamma}{2\eta}\sum_{d\in J(k)}\|x^{d} - x^{d+1}\|^2\cr
\le & \textstyle -\frac{\beta\mu}{2}\|{x}^k-x^*\|^2+{\beta\mu}|J(k)|\sum_{d\in J(k)}\| x^{d} - x^{d+1}\|^2-\frac{1-\beta}{2\eta^2}\|x^k-\bar{x}^{k+1}\|^2 \\
&\textstyle +\frac{|J(k)|}{2\gamma\eta}\|x^k-\bar{x}^{k+1}\|^2+\frac{\gamma}{2\eta}\sum_{d\in J(k)}\|x^{d} - x^{d+1}\|^2,
\end{aligned}
\end{equation*}
where the second inequality holds because $S$ is $\frac{1}{2}$-cocoercive and also quasi-$\mu$-strongly monotone, and the last one comes from the Cauchy-Schwartz inequality.
Plugging the above inequality and \eqref{term2} into \eqref{eqn:equality_inconsistent} and noting $|J(k)|\subset\{k-\tau,\ldots,k-1\}$ gives
\begin{align*}
\begin{aligned}
\textstyle\mathbb{E}\big(\|x^{k+1} - x^* \|^2 \,\big|\, \cX^k\big)  \leq & \textstyle\big(1-\frac{\beta\mu\eta}{m}\big)\|x^{k} - x^* \|^2  +\frac{1}{ m}\left(2\beta\eta\mu\tau+\gamma\right)\sum_{d=k-\tau}^{k-1}\|x^d-x^{d+1}\|^2\\
&\textstyle+ \frac{1}{m}\left(\frac{{\tau}}{\gamma}+\frac{1}{ mp_{\min}}-\frac{1-\beta}{{\eta}}\right)\|x^k-\bar x^{k+1}\|^2.
\end{aligned}
\end{align*}
Taking expectation over both sides of the above inequality, noting $\EE\|x^d-x^{d+1}\|^2\le \frac{1}{m^2p_{\min}}\EE\|x^d-\bar{x}^{d+1}\|^2$, and using Lemma \ref{lemma:linear_induction}, we have
\begin{align*}
\begin{aligned}
~ &\mathbb{E}\big(\|x^{k+1} - x^* \|^2\big)  \\
\le& \textstyle \big(1-\frac{\beta\mu\eta}{m}\big)\EE\|x^{k} - x^* \|^2  +\frac{{1}}{ m^3 p_{\min}}\left(2\beta\eta\mu\tau+{\gamma}\right)\sum_{d= 1}^{\tau}\rho^d\EE\|x^k-\bar{x}^{k+1}\|^2\\
&\textstyle + \frac{1}{m}\left(\frac{{\tau}}{\gamma}+\frac{1}{mp_{\min}}-\frac{1-\beta}{{\eta}}\right)\EE\|x^k-\bar x^{k+1}\|^2\cr
=& \textstyle \big(1-\frac{\beta\mu\eta}{m}\big)\EE\|x^{k} - x^* \|^2  +\frac{{1}}{m^3 p_{\min}}\left(2\beta\eta\mu\tau+{\gamma}\right)\frac{\rho(\rho^\tau-1)}{\rho-1}\EE\|x^k-\bar{x}^{k+1}\|^2\\
&\textstyle + \frac{1}{m}\left(\frac{{\tau}}{\gamma}+\frac{1}{mp_{\min}}-\frac{1-\beta}{{\eta}}\right)\EE\|x^k-\bar x^{k+1}\|^2\\
= & \textstyle \big(1-\frac{\beta\mu\eta}{m}\big)\EE\|x^{k} - x^* \|^2 \cr
&\textstyle+\frac{1}{m}\left(\frac{2\beta\eta \mu\tau}{ m^2p_{\min}}\frac{\rho(\rho^\tau - 1)}{\rho - 1}+\frac{2}{m}\sqrt{\frac{\rho(\rho^\tau-1)\tau}{(\rho-1) p_{\min}}}+\frac{1}{ mp_{\min}}-\frac{1-\beta}{{\eta}}\right)\EE\|x^k-\bar x^{k+1}\|^2\cr
\le & \textstyle \big(1-\frac{\beta\mu\eta}{m}\big)\EE\|x^{k} - x^* \|^2,
\end{aligned}
\end{align*}
\cut{Letting $\gamma = m\sqrt{\frac{\tau (\rho - 1) p_{\min}}{ \rho(\rho^\tau - 1)}}$ in the above inequality, we have
\begin{align*}
&\mathbb{E}\big(\|x^{k+1} - x^* \|^2\big)  \cr
\le & \textstyle \big(1-\frac{\beta\mu\eta}{m}\big)\EE\|x^{k} - x^* \|^2 \cr
&\textstyle+\frac{1}{m}\left(\frac{2\beta\eta \mu\tau}{ m^2p_{\min}}\frac{\rho(\rho^\tau - 1)}{\rho - 1}+\frac{2}{m}\sqrt{\frac{\rho(\rho^\tau-1)\tau}{(\rho-1) p_{\min}}}+\frac{1}{ mp_{\min}}-\frac{1-\beta}{{\eta}}\right)\EE\|x^k-\bar x^{k+1}\|^2\cr
\le & \textstyle \big(1-\frac{\beta\mu\eta}{m}\big)\EE\|x^{k} - x^* \|^2,
\end{align*}}where we have let $\gamma = m\sqrt{\frac{\tau (\rho - 1) p_{\min}}{ \rho(\rho^\tau - 1)}}$ in the second equality, and the last inequality holds because of the choice of $\eta$. 
Therefore, \eqref{linear-convg} holds. 
\hfill\end{proof}

\begin{remark}\label{rmk:linear}
Assume $i_k$ is chosen uniformly at random, so $p_{\min}=\frac{1}{m}$. We consider the case when $m$ and $\tau$ are large. Let $\sqrt{\rho}=1+\frac{1}{\tau}$.  Then from the fact that $(1+\frac{1}{k})^k$ increasingly converges to the natural number $e$, we have from \eqref{eta-cond1} that $\underline{\eta}_1=O(\frac{\sqrt{m}}{\tau^2})$. In addition, note from \eqref{eta-cond2} that $a=O(b^2)=O(\frac{\tau^2}{m})$, and thus $\underline{\eta}_2=O(\frac{\sqrt{m}}{\tau})$. Therefore, if $\tau=O(m^{\frac{1}{4}})$, then the stepsize in Theorem \ref{thm:strongly_monotone} can be $\eta=O(1)$. Hence, linear speedup can be achieved.
\end{remark}

\end{section}

\begin{section}{Experiments}\label{sec:numerical}
We illustrate the behavior of ARock for solving the $\ell_1$ regularized logistic regression problem. Our primary goal is to show the efficiency of the async-parallel implementation compared to the single-threaded implementation and the sync-parallel implementation.

Our experiments run on 1 to 32 threads on a machine with eight Quad-Core AMD Opteron\textsuperscript{TM} Processors (32 cores in total) and $64$ Gigabytes of RAM. All of the experiments were coded in C++ and OpenMP. We use the Eigen library\footnote{\url{http://eigen.tuxfamily.org}} for sparse matrix operations. Our codes as well as numerical results for other applications will be publicly available on the authors' website.

The running times and speedup ratios of  both sync-parallel and async-parallel algorithms are sensitive to a number of factors, such as the size of each coordinate update (granularity), sparsity  of the problem data,  compiler optimization flags,  and  operations that affect cache performance and memory access contention. In addition, since all  agents in the sync-parallel implementation  must wait for the last agent to finish an iteration, a large load imbalance will significantly degrade the performance. We do not have the space in this paper to present  numerical results under all variations of these cases. 
\cut{
\begin{subsection}{Linear equations}\label{sec:lin-eq}
We apply ARock presented in Subsection \ref{sec:linearequations} to solve linear equations $Ax=b$ on two synthetic datasets. In Dataset I, $A \in \mathbb{R}^{n\times n}$ is a banded matrix with a small bandwidth. Given the bandwidth $w$, we have  $a_{ij}=0$ whenever $|j-i|>w$. In Dataset II, $A\in \mathbb{R}^{n\times n}$ is a dense random symmetric matrix.
Both matrices $A$ are   diagonally dominant. They are summarized in Table \ref{tab:lq_data}. For each dataset, the entries of $x$ independently follow the standard Gaussian distribution, and  $b$ is generated as $Ax$.
\begin{table}[htbp]
\centering
 \begin{tabular}{lccc}
  \toprule
  Name  & Type & Size ($n$) & Bandwidth ($w$)\\
  \midrule
 Dataset I & Sparse & 1, 000, 000 & 5\\
  Dataset II & Dense & 5, 000 & N/A \\
  \bottomrule
 \end{tabular}
 \caption{\label{tab:lq_data}Linear equation datasets}
\end{table}

Throughout the experiments, $A$, $b$ and $x$ are shared variables. We set $\eta_k  = 1, \,\forall k$. 
Although the theory (Theorem~\ref{thm:convergence}) requires a smaller step size if more cores are used (usually leading to a larger delay $\tau$), we observe numerically that the unit step size can serve well for the algorithm on different numbers of cores. 
The components of $x$ are  evenly partitioned into coordinates, and each coordinate contains $\frac{n}{\text{number of cores}}$ components of $x$, where the number of cores varies from  1 to 32. Each core is in charge of updating one pre-assigned coordinate\footnote{Deterministic assignments are applied since generating pseudo-random numbers is more expensive than updating a coordinate in this test. We tested random assignments and observed that they  give nearly the same epoch-versus-convergence performance but takes a much longer running time.}. We run both  sync-parallel Jacobi and ARock  algorithms to 100 epochs on Dataset I, and to 50 epochs on Dataset II, where an epoch is counted for every   $n$ updates to the components of $x$.

Figure~\ref{fig:lq_speedup} depicts how the size of residual, $\|Ax - b\|$, reduces over the wall-clock time. From the figure, we see that  both sync-parallel Jacobi and ARock show almost-linear speedup as the number of cores increases, but ARock takes much less time than sync-parallel Jacobi. To compare their  epoch convergence rates, we compute the size of residual  after each epoch (though they take different times to finish an epoch). Figure~\ref{fig:lq_residual} plots the sizes of residual at the end of different epochs.  Note that the  curves of sync-parallel Jacobi  running on different numbers of cores are nearly identical. Hence, only one curve is shown. The curves  of ARock running on different numbers of cores are also nearly identical, 
and they closely match with the reference curve of the standard Gauss-Seidel method, which is a sequential method. Therefore, ARock enjoys not only almost-linear speedup but also the Gauss-Seidel type of fast convergence, thanks to the asynchrony. This phenomenon was also  observed in the previous work~\cite{goos_numerical_1992,bertsekas1991some}.

\begin{figure}[!h]
        \centering
       \begin{subfigure}[b]{0.48\textwidth}
                \includegraphics[width=\textwidth]{./figs/linear_eqn/1_m_11_diag_time_vs_res_cropped}
                \caption{Dataset I}
        \end{subfigure}
        ~ 
        \begin{subfigure}[b]{0.48\textwidth}
                \includegraphics[width=\textwidth]{./figs/linear_eqn/dense_time_vs_residual}
                \caption{Dataset II}
        \end{subfigure}
        \caption{Residual $\|Ax - b\|$ versus  wall-clock time for sync-parallel Jacobi and ARock (async-parallel) running on 1--32 cores. All runs are terminated at 100 epochs on Dataset I and 50 epochs on Dataset II.}\label{fig:lq_speedup}
\end{figure}

\begin{figure}[!h]
        \centering
       \begin{subfigure}[b]{0.48\textwidth}
                \includegraphics[width=\textwidth]{./figs/linear_eqn/1_m_11_diag_residual_cropped}
                \caption{Dataset I}
        \end{subfigure}
        ~ 
        \begin{subfigure}[b]{0.48\textwidth}
                \includegraphics[width=\textwidth]{./figs/linear_eqn/ds_epoch_vs_res}
                \caption{Dataset II}
        \end{subfigure}
        \caption{Epoch-convergence (regardless of time) of sync-parallel Jacobi and ARock (async-parallel). The black residual curve of the standard Gauss-Seidel method is plotted for reference.}\label{fig:lq_residual}
\end{figure}


\end{subsection}
}

\begin{subsection}{$\ell_1$ regularized logistic regression}
In this subsection, we apply ARock with the update \eqref{asyncFBS} to the $\ell_1$ regularized logistic regression problem:
\begin{equation}\label{eqn:log}
\Min_{x \in \mathbb{R}^n} \lambda \|x\|_1 + \frac{1}{N} \sum_{i=1}^N \log\big(1 + \exp(-b_i \cdot a_i^T x)\big),
\end{equation}
where $\{(a_i, b_i)\}_{i=1}^N$ is the set of sample-label pairs with $b_i \in \{1, -1\}$, $\lambda=0.0001$, and $n$ and $N$ represent the numbers of features and samples, respectively. This test uses the datasets\footnote{\url{http://www.csie.ntu.edu.tw/~cjlin/libsvmtools/datasets/}}: rcv1 and news20, which are summarized in Table \ref{tab:log_data}.

\begin{table}[htbp]
\centering
{\small \begin{tabular}{crrr}
\hline
  Name & \# samples & \# features & \# nonzeros in $\{a_1,\ldots,a_N\}$ \\
  \hline
 rcv1 & 20, 242 & 47, 236 & 1, 498, 952\\
  news20 & 19, 996 & 1, 355, 191 & 9, 097, 916\\
  \hline
 \end{tabular}}
 \caption{\label{tab:log_data}Two  datasets for  sparse logistic regression.}
 \vspace{-8mm}
\end{table}

\begin{figure}[!h]
        \centering
       \begin{subfigure}[b]{0.4\textwidth}
                \includegraphics[width=\textwidth]{./figs/logistic/rcv1}
                \caption{ dataset rcv1}
        \end{subfigure}
        ~ 
        \begin{subfigure}[b]{0.4\textwidth}
                \includegraphics[width=\textwidth]{./figs/logistic/news20}
                \caption{  dataset news20}
        \end{subfigure}
        \vspace{-3mm}
        \caption{The distribution of coordinate sparsity. Each dot represents the total number of nonzeros in the vectors $a_i$ that correspond to each coordinate. The large distribution in (b) is responsible  for the large load imbalance and thus the poor sync-parallel performance.}\label{fig:sparsity}
 \vspace{-3mm}
\end{figure}

We let each coordinate hold roughly 50 features. {Since the total number of features is not divisible by 50, some coordinates have 51 features.} We let each agent draw a coordinate uniformly at random at each iteration. We stop all the tests after 100 epochs since they have nearly identical progress per iteration. 
The step size is set to $\eta_k=0.9,\,\forall k$. Let $A = [a_1, \ldots, a_N]^T$ and $b = [b_1, ..., b_N]^T$. In global memory, we store $A,~b$, and $x$. We also store the product $Ax$ in global memory so that the forward step can be efficiently computed. Whenever a coordinate of $x$ gets updated, $Ax$ is immediately updated at a low cost. Note that if $Ax$ is \emph{not} stored in global memory, every coordinate update will have to compute $Ax$ from scratch, which involves the entire $x$ and will be very expensive.  

Table \ref{tab:log_time} gives the running times of  the sync-parallel and ARock (async-parallel) implementations on the two datasets. We can observe that ARock achieves almost-linear speedup, but sync-parallel scales very poorly as we explain below.

In the sync-parallel implementation,  all the running cores have to wait for the last core to finish an iteration, and therefore if a core has a large load, it slows down the iteration. Although every core is (randomly) assigned to roughly the same number of features (either 50 or 51 components of $x$) at each iteration, their  $a_i$'s have very different numbers of nonzeros (see Figure \ref{fig:sparsity} for the distribution), and the core with the largest number of nonzeros is the slowest (Sparse matrix computation is used for both datasets, which are very large.) As more cores are used,  despite that they altogether do more work at each iteration,  the per-iteration time reduces as the slowest core tends to be slower. The very large imbalance of load explains why the 32 cores only give speedup ratios of 4.0 and 1.3 in Table \ref{tab:log_time}.

On the other hand, being asynchronous, ARock does not suffer from the  load imbalance. Its performance grows nearly linear with the number of cores. In theory, a large load imbalance may cause a large \cut{maximum delay, }$\tau$, and thus a small $\eta_k$. However, the uniform $\eta_k=0.9$ works well in all the tests, possibly because the $a_i$'s are sparse.

Finally, we have observed that the progress toward solving \eqref{eqn:log} is mainly a function of the number of epochs and does not change appreciably  when the number of cores increases or between sync-parallel and async-parallel. Therefore, we always stop at 100 epochs.

\begin{table}[htbp]
\centering
{\small \begin{tabular}{|c|r|r|r|r|r|r|r|r|}
  \hline
  \multirow{3}{*}{\# cores} & \multicolumn{4}{|c|}{rcv1} & \multicolumn{4}{c|}{news20} \\
  \cline{2-9}
  & \multicolumn{2}{|c|}{Time (s)} &  \multicolumn{2}{c|}{Speedup} &  \multicolumn{2}{c|}{Time (s)} & \multicolumn{2}{c|}{Speedup}\\
  \cline{2-9}
  & async & sync &  async & sync &  async & sync &  async & sync \\
  \hline
   1 &   122.0 &  122.0 & 1.0   & 1.0 & 591.1   & 591.3 & 1.0   & 1.0\\
   2 &   63.4   &  104.1 & 1.9   & 1.2 & 304.2   & 590.1 & 1.9   & 1.0\\
   4 &   32.7   &  83.7   & 3.7   & 1.5 & 150.4   & 557.0 & 3.9   & 1.1\\
   8 &   16.8   &  63.4   & 7.3   & 1.9 & 78.3     & 525.1 & 7.5   & 1.1\\
   16 & 9.1     &  45.4   & 13.5 & 2.7 & 41.6     & 493.2 & 14.2 & 1.2\\
   32 & 4.9     &  30.3   & 24.6 & 4.0 & 22.6     & 455.2 & 26.1 & 1.3\\
  \hline
 \end{tabular}}
 \caption{\label{tab:log_time}Running times of  ARock (async-parallel) and sync-parallel FBS implementations for the $\ell_1$ regularized logistic regression on two datasets. Sync-parallel has a very poor speedup  due to the large distribution of coordinate sparsity (Figure \ref{fig:sparsity}) and thus the large load imbalance across cores.}
 \vspace{-4mm}
\end{table}
\end{subsection}
\end{section}

\begin{section}{Conclusion}
We have proposed an async-parallel framework, ARock, for finding a fixed-point of a nonexpansive operator by coordinate updates. We establish the almost sure weak and strong convergence, linear convergence rate and almost-linear speedup of ARock under certain assumptions.
Preliminary numerical results on real data illustrate the high efficiency of the proposed framework compared to the traditional parallel (sync-parallel) algorithms.
\end{section}

\begin{section}{Acknowledgements}
We would like to thank Brent Edmunds for offering invaluable suggestions on the organization and writing of this paper. We would also like to thank Robert Hannah for coming up with the dual-memory approach.
The authors are grateful to Kun Yuan for helpful discussions on decentralized optimization. \end{section}

\bibliographystyle{spmpsci}
\bibliography{asyn}

{\small
\appendix
\section{Derivation of certain updates}\label{sec:appendix}
We show in details how to obtain the updates in \eqref{eqn:relaxed_prs_set} and \eqref{eqn:asyn_admm}.

\subsection{Derivation of updates in \eqref{eqn:relaxed_prs_set}}
Let $x=(x_1,\ldots,x_m)\in \cH^m$, $$f(x):=\sum_{i=1}^m\cI_{C_i}(x_i),\qquad g(x):=\cI_{\{x_1=\cdots=x_m\}}(x),$$
where $g(x)$ equals $0$ if $x_1=\cdots=x_m$ and $\infty$ otherwise.
Then \eqref{eqn:relaxed_prs-a} reduces to
\begin{align*}
\hat{x}^k=&\argmin_{z\in\cH^m}\, g(z) +\frac{1}{2\gamma}\|z-\hat{z}^k\|^2=\argmin_{z\in\cH^m: z_1=\cdots=z_m} \|z-\hat{z}^k\|^2\\
=&\argmin_{z\in\cH^m: z_1=\cdots=z_m} \sum_{i=1}^m\|z_1-\hat{z}_i^k\|^2
=\left(\frac{1}{m}\sum_{i=1}^m\hat{z}_i^k,\ldots,\frac{1}{m}\sum_{i=1}^m\hat{z}_i^k\right)\in\cH^m,
\end{align*}
where the last equality is obtained by noting that $z_1=\frac{1}{m}\sum_{i=1}^m\hat{z}_i^k$ is the unique minimizer of $\sum_{i=1}^m\|z_1-\hat{z}_i^k\|^2$. Next, \eqref{eqn:relaxed_prs-b} reduces to
\begin{align*}
\hat{y}^k=\argmin_{z\in\cH^m} f(z) +\frac{1}{2\gamma}\|z-(2\hat{x}^k-\hat{z}^k)\|^2
=\argmin_{z: z_i\in C_i,\forall i} \sum_{i=1}^m\|z_i-(2\hat{x}_i^k-\hat{z}_i^k)\|^2.
\end{align*}
It is easy to see that $\hat{y}_i^k= \Proj_{C_i}(2\hat{x}_i^k-\hat{z}_i^k),\,\forall i$.

Since \eqref{eqn:relaxed_prs-c} only updates the $i_k$th coordinate of $z$, we only need $\hat{x}_{i_k}^k$ and $\hat{y}_{i_k}^k$, and thus in \eqref{eqn:relaxed_prs_set:a} and \eqref{eqn:relaxed_prs_set:b}, we only compute $\hat{x}_{i_k}^k$ and $\hat{y}_{i_k}^k$. Plugging the above $\hat{x}^k$ and $\hat{y}^k$ into \eqref{eqn:relaxed_prs-c} gives \eqref{eqn:relaxed_prs_set:c} directly.

\subsection{Derivation of \eqref{eqn:asyn_admm}}
We first show how to get \eqref{eq:lag-dual}. The Lagrangian of \eqref{eqn:admm_form} is
$L(x,y,w)= f(x)+g(y)-\langle w, Ax+By-b\rangle,$
and the Lagrange dual function is
\begin{align*}
d(w)=&\min_{x\in \cH_1,y\in\cH_2}L(x,y,w)\\
=&\big(\min_{x\in\cH_1} f(x)-\langle A^*w, x\rangle\big) +\big( \min_{y\in\cH_2} g(y)-\langle B^*w, y\rangle\big) +\langle w, b\rangle\\
=&-\big(\max_{x\in\cH_1} -f(x)+\langle A^*w, x\rangle\big) -\big( \max_{y\in\cH_2} -g(y)+\langle B^*w, y\rangle\big) +\langle w, b\rangle\\
=&-f^*(A^*w)-g^*(B^*w)+\langle w, b\rangle,
\end{align*}
where the last equality is from the definition of convex conjugate: $f^*(z)=\max_x \langle z, x\rangle-f(x).$ Hence, the dual problem is $\max_{w} d(w)$, which is equivalent to \eqref{eq:lag-dual}.

Secondly, we show why $z^+=\prox_{\gamma\cdot d_g}(z)$ is given by \eqref{dualg}. Note
\begin{align*}
\textstyle\min_s d_g(s)+\frac{1}{2\gamma}\|s-z\|^2
=&\textstyle\min_s g^*(B^*s)-\langle s, b\rangle +\frac{1}{2\gamma}\|s-z\|^2\\
=&\textstyle\min_s\max_y \langle B^*s, y\rangle - g(y)-\langle s, b\rangle +\frac{1}{2\gamma}\|s-z\|^2\\
=&\textstyle\max_y\min_s \langle B^*s, y\rangle - g(y)-\langle s, b\rangle +\frac{1}{2\gamma}\|s-z\|^2\\
=&\textstyle\max_y\min_s \langle s, By-b\rangle - g(y) +\frac{1}{2\gamma}\|s-z\|^2\\
=&\textstyle\max_y -g(y)+\langle z, By-b\rangle -\frac{\gamma}{2}\|By-b\|^2\\
=&\textstyle-\min_y g(y)-\langle z, By-b\rangle +\frac{\gamma}{2}\|By-b\|^2,
\end{align*}
where the fifth equality holds because $s^*=z-\gamma(By-b) =\arg\min_s\langle s, By-b\rangle  +\frac{1}{2\gamma}\|s-z\|^2.$
Hence, by the definition of the proximal operator and the above arguments, we have that $z^+=\prox_{\gamma\cdot d_g}(z)$ can be obtained from \eqref{dualg}. Then \eqref{dualf} is from \eqref{dualg} through replacing $g$ to $f$, $B$ to $A$, and $b$ to $0$.

Finally, it is straightforward to have \eqref{eqn:asyn_admm} by plugging \eqref{dualf} and \eqref{dualg} into \eqref{eqn:relaxed_prs}.

\section{Derivation of async-parallel ADMM for decentralized optimization}\label{sec:appendix2}
This section describes how to implement the updates \eqref{eqn:asyn_admm} for the model~\eqref{eqn:des_prob3}.

In~\eqref{eqn:des_prob3}, $g(y)$ and $b$ vanish and, corresponding to the two constraints $x_i = y_{ij}$ and $x_j = y_{ij}$, the two rows of matrices $A$ and $B$ are
$\begin{bmatrix}
     \cdots &  1 & \cdots  & \cdots  & \cdots  \\
     \cdots & \cdots & \cdots & 1 & \cdots
  \end{bmatrix}\quad
  \begin{bmatrix}
     \cdots  & -1 & \cdots  \\
     \cdots  & -1 & \cdots
  \end{bmatrix}, $
where $\cdots$ are zeros, the two coefficients 1 correspond to $x_i$ and $x_j$, and the two coefficients $-1$ correspond to $y_{ij}$.
Then, \eqref{eqn:asyn_admm:a} and~\eqref{eqn:asyn_admm:b} can be calculated as
\begin{align*}
\hat y^k_{li} &= (\hat z_{li,l}^k+\hat z_{li,i}^k)/(2\gamma)\quad\forall l\in L(i),\\
 (\hat w_g^k)_{li,i} & = (\hat z^k_{li,i}-\hat z^k_{li,r})/2\quad\forall l\in L(i),\\
\hat y^k_{ir} &= (\hat z_{ir,i}^k+\hat z_{ir,r}^k)/(2\gamma)\quad\forall r\in R(i),\\
 (\hat w_g^k)_{ir,i} & = (\hat z^k_{ir,i}-\hat z^k_{ir,r})/2\quad\forall r\in R(i).
\end{align*}
In addition, $\hat x^k_i$ can be obtained by solving~\eqref{eqn:asyn_admm_des_a}, and both $z_{li,i}^{k+1}$ and $z_{ir,i}^{k+1}$ can be updated from~\eqref{eqn:asyn_admm_des_b} and~\eqref{eqn:asyn_admm_des_c}.

Furthermore, as mentioned in Section~\ref{subsec:async_admm_des}, we can derive another version of async-parallel ADMM for decentralized optimization, which reduces to the algorithm in \cite{wei2013on}, by activating an edge $(i,j)\in E$ instead of an agent $i$ each time. In this version, the agents $i$ and $j$ associated with the edge $(i,j)$ must also be activated. Here we derive the update~\eqref{eqn:asyn_admm} for the model~\eqref{eqn:des_prob3} with the update order of $x$ and $y$ swapped. Following~\eqref{eqn:asyn_admm} we obtain the following steps whenever an edge $(i,j)\in E$ is activated:
\begin{subequations}
\begin{align*}
\hat x^{k}_{i} & = \arg\min_{x_i} f_i(x_i) -  \big(\sum_{l \in L(i)}\hat z^k_{li, i}  + \sum_{r\in R(i)}\hat z^k_{ir, i} \big) x_i +  \frac{\gamma}{2} |E(i)|\cdot \|x_i\|^2 \\
\hat x^{k}_{j}  &= \arg\min_{x_j} f_j(x_j) -  \big(\sum_{l \in L(j)}\hat z^k_{lj, j}  + \sum_{r\in R(j)}\hat z^k_{jr, j} \big) x_j +  \frac{\gamma}{2} |E(j)|\cdot \|x_j\|^2 \\
(\hat w_{f}^{k})_{ij,i} &= \hat z_{ij,i}^{k} - \gamma \hat x_i^{k} \\
(\hat w_{f}^{k})_{ij,j} &= \hat z_{ij,j}^{k} - \gamma \hat x_j^{k} \\
\hat y_{ij}^{k} &= \arg\min_{y_{ij}}  \langle 2 (\hat w_{f}^{k})_{ij,i} -\hat z_{ij,i}^{k}+ 2(\hat w_{f}^{k})_{ij,j} -\hat z_{ij,j}^{k}, y_{ij} \rangle + \frac{\gamma}{2} \|y_{ij}\|^2  \\
(\hat w_{g}^{k})_{ij,i} &=2 (\hat w_{f}^{k})_{ij,i} -\hat z_{ij,i}^{k} + \gamma  \hat y_{ij}^k\\
(\hat w_{g}^{k})_{ij,j} &=2 (\hat w_{f}^{k})_{ij,j} -\hat z_{ij,j}^{k} + \gamma  \hat y_{ij}^k\\
z_{ij,i}^{k+1} &= z_{ij,i}^{k} + \eta_k ((\hat w_{g}^{k})_{ij,i} - (\hat w_{f}^{k})_{ij,i})\\
z_{ij,j}^{k+1} &= z_{ij,j}^{k} + \eta_k ((\hat w_{g}^{k})_{ij,j} - (\hat w_{f}^{k})_{ij,j}).
\end{align*}
\end{subequations}
Every agent $i$ in the network maintains the dual variables $z_{li,i}$, $l\in L(i)$, and $z_{ir,i}$, $r\in R(i)$, and the variables $x,y,w$ are intermediate and do not need to be maintained between the activations. When an edge $(i,j)$ is activated, the agents $i$ and $j$ first compute their $\{\hat x_i^k,(\hat w_{f}^{k})_{ij,i}\}$ and $\{\hat x_j^k,(\hat w_{f}^{k})_{ij,j}\}$ independently and respectively, then they collaboratively compute $\hat y_{ij}^k$, and finally they update their own $z_{ij,i}^k$ and $z_{ij,j}^k$, respectively. We allow adjacent edges (which share agents) to be activated in a short period of time when their updates are possibly overlapped in time. When $\tau=0$, i.e., there is no simultaneous activation or overlap, it reduces to the algorithm in~\cite{wei2013on}.
}

\end{document}